\documentclass[oneside,english, 11pt]{amsart}
\usepackage[T1]{fontenc}
\usepackage[utf8]{inputenc}
\usepackage{enumitem}
\usepackage{amsthm}
\usepackage{amstext}
\usepackage{amssymb}
\usepackage{amsmath}
\usepackage{mathtools}
\usepackage{xcolor}%
\usepackage[mathscr]{eucal}

\usepackage{vmargin}

\setpapersize{A4}
\setmargins{3.0cm}       
{1.5cm}                        
{14.5cm}                      
{23.42cm}                    
{10pt}                           
{1cm}                           
{0pt}                             
{2cm}

\makeatletter
\numberwithin{equation}{section}
\numberwithin{figure}{section}
\theoremstyle{plain}
\newtheorem{thm}{\protect\theoremname}
\theoremstyle{plain}
\newtheorem{cor}{\protect\corollaryname}
  \theoremstyle{plain}
  \newtheorem{prop}[thm]{\protect\propositionname}
  \theoremstyle{remark}
  \newtheorem{rem}[thm]{\protect\remarkname}
  \theoremstyle{definition}
  \newtheorem{defn}[thm]{\protect\definitionname}
  \theoremstyle{plain}
  \newtheorem{lem}[thm]{\protect\lemmaname}
    \theoremstyle{definition}
  \newtheorem{example}[thm]{\protect\examplename}

\theoremstyle{remark}


\DeclareMathOperator{\re}{Re}
\DeclareMathOperator{\im}{Im}

\DeclareMathOperator{\id}{id}
\DeclareMathOperator{\diff}{Diff}
\DeclareMathOperator{\sing}{Sing}

\DeclareMathOperator{\gl}{GL}
\DeclareMathOperator{\spn}{Span}

\DeclareMathOperator{\dist}{dist}

\usepackage[english]{babel}
  \providecommand{\definitionname}{Definition}
  \providecommand{\lemmaname}{Lemma}
  \providecommand{\corollaryname}{Corollary}
  \providecommand{\propositionname}{Proposition}
  \providecommand{\remarkname}{Remark}
\providecommand{\theoremname}{Theorem}
\providecommand{\examplename}{Example}
\providecommand{\assertionname}{Assertion}

\makeatletter

\makeatother

\author{Maycol Falla Luza}
\author{Rudy Rosas}
\thanks{The first author was partially supported by CAPES-COFECUB Ma932/19. The second author was supported by the Vicerrectorado the Investigaci\'on de la Pontificia Universidad Cat\'olica del Per\'u}



\begin{document}

\title{Distributions, first integrals and Legendrian foliations}

\maketitle
\begin{abstract} We study germs of holomorphic distributions with ``separated variables''. 
 In codimension one,
a well know example of this kind of distribution  is given  by
\begin{equation}\nonumber 
dz=(y_1dx_1-x_1dy_1)+\dots+(y_mdx_m-x_mdy_m),
\end{equation}which defines the canonical contact structure on $\mathbb{CP}^{2m+1}$. Another
example is the Darboux distribution
\begin{equation}\label{contact2}\nonumber
dz=x_1dy_1+\dots+x_mdy_m, 
\end{equation} which gives the normal local form of any contact structure. Given a germ $\mathcal{D}$
of holomorphic distribution with separated variables in $(\mathbb{C}^n,0)$, we show that there exists , for some $\kappa \in \mathbb{Z}_{\geq 0}$ related to the Taylor coefficients of $\mathcal{D}$, a holomorphic submersion $$H_{\mathcal{D}}\colon (\mathbb{C}^n,0)\to (\mathbb{C}^{\kappa},0)$$ such that $\mathcal{D}$ is completely non-integrable on each level of $H_{\mathcal{D}}$. Furthermore, we show that there exists a holomorphic vector field $Z$ tangent to $\mathcal{D}$, such that 
each level of $H_{\mathcal{D}}$ contains a leaf of $Z$ that is somewhere dense in the level. In particular,
the field of meromorphic first integrals of $Z$ and that of $\mathcal{D}$ are the same. Between several other results, we show that the canonical contact structure on $\mathbb{CP}^{2m+1}$ supports a Legendrian holomorphic foliation whose generic leaves are dense in $\mathbb{CP}^{2m+1}$. 
So we obtain examples of injectively immersed Legendrian holomorphic open manifolds  that are everywhere dense. 

\end{abstract}
\section{Introduction} 
Let $M,N\in\mathbb{N}$, $M\ge 2$ and consider coordinates $x=(x_1,\dots,x_M)$ and $z=(z_1,\dots,z_N)$ in  $\mathbb{C}^{M}$  and $\mathbb{C}^{N}$, respectively. 
Let
 $\mathcal{D}$  be the distribution of dimension $M$  defined by the system of equations 
 \begin{equation}\label{mainsystem}
\begin{aligned}
dz_1&=\omega_1\\
dz_2&=\omega_2\\
&\;\;\vdots\\
dz_N&=\omega_N,
\end{aligned}
\end{equation}
where $\omega_1,\dots,\omega_N \in \Omega^1(\mathbb{C}^M,0)$, that is, these forms depend only on the variables $(x_1,\dots,x_M)$. An important example of this situation happens for $M=2m$, $N=1$ and  the distribution 
\begin{equation}\label{contact}
dz=(y_1dx_1-x_1dy_1)+\dots+(y_mdx_m-x_mdy_m),
\end{equation}
where we consider coordinates $(x_1,y_1,\dots,x_m,y_m, z)\in\mathbb{C}^{2m+1}$ --- this distribution defines the canonical contact structure on $\mathbb{CP}^{2m+1}$. Another example is given by the Darboux contact form
\begin{equation}\label{contact2}\nonumber
dz=x_1dy_1+\dots+x_mdy_m, 
\end{equation} which gives the normal local form of any contact structure, see \cite{Alarcon}, \cite{Darboux} or \cite{Godbillon}. 
 The purpose of this work is to study 
the integrability properties of the distribution $\mathcal{D}$  and their relationship with the dynamics of the holomorphic vector fields that are tangent to it.  To begin with, we point out  a situation where we easily find a holomorphic
 first integral for $\mathcal{D}$:  suppose that there exist $a_1,\dots,a_N\in\mathbb{C}$, not all zero, such that 
 $$a_1d\omega_1+\dots+a_Nd\omega_N=0.$$ Then, by Poincaré's Lemma, we find a holomorphic
  function $g$ in  $(\mathbb{C}^{M},0)$ with $g(0)=0$ and such that 
 $$dg=a_1\omega_1+\dots+a_N\omega_N.$$ Thus, if we set $$T(z)=a_1z_1+\dots +a_Nz_N,$$  we have
 $$d\left(T(z)-g(x)\right)=a_1(dz_1-\omega_1)+\dots+a_N(dz_N-\omega_N).$$ It follows from this equation 
 that  $\mathcal{D}$ is a subdistribution of $$d\left(T(z)-g(x)\right)=0,$$ 
 so the function $T(z)-g(x)$ is a holomorphic first integral of $\mathcal{D}$. This kind of first 
 integrals of $\mathcal{D}$ will be called
  \emph{elementary}.  Roughly speaking, our first result asserts that the elementary first integrals generate all the space of meromorphic first integrals of $\mathcal{D}$ in $(\mathbb{C}^{M+N},0)$.
 In order to give a precise statement of our results, we briefly introduce some notions. Firstly we define 
    \begin{align*}\omega &=(\omega_1,\dots,\omega_N),  \\
  d\omega &=(d\omega_1,\dots,d\omega_N), \textrm{ and }\\
  \int\limits_{\gamma}\omega &=\left(\int\limits_{\gamma} \omega_1,\dots,\int_{\gamma}\omega_N\right),
  \end{align*} where $\gamma$ is any piecewise smooth path in the domain of definition of $\omega$.
   We can express $d\omega$ as a series at $0\in \mathbb{C}^M$ in the form
 \begin{equation} \label{ecudo}d\omega=\sum{c_{K}^{ij}x^Kdx_idx_j},\end{equation} where $dx_i dx_j$ stands for $dx_i \wedge dx_j$, $c_{K}^{ij}\in\mathbb{C}^N$  
  and the summation extends  over $$K=(k_1,\dots,k_M)\in\left(\mathbb{Z}_{\ge 0}\right)^M\hspace{-0.2cm}, \; 1\le i< j\le M.$$
 Let $$W_\mathcal{D}\subset \mathbb{C}^{N}$$ be the complex vector space spanned by the coefficients 
 $c_{K}^{ij}$ and put $$ \kappa=\operatorname{codim} (W_\mathcal{D}).$$ If $\kappa\ge 1$,   take a linear (surjective) map 
 $$T=(T_1,\dots,T_\kappa)\colon \mathbb{C}^N\to\mathbb{C}^\kappa$$ such that $$\ker (T)=W_\mathcal{D}.$$ 
 The  arguments explained above allow us to find a holomorphic map 
 $$g=(g_1,\dots, g_\kappa)\colon(\mathbb{C}^M,0)\to (\mathbb{C}^\kappa,0)$$ such that the functions
  $$h_j=T_j(z)-g_j(x),\; j=1,\dots,\kappa$$ are elementary first integrals of  $\mathcal{D}$. Then the map 
 \begin{align}\label{defh}H_{\mathcal{D}}:=(h_1,\dots,h_\kappa)&\colon(\mathbb{C}^{M},0)\times \mathbb{C}^N\to (\mathbb{C}^\kappa,0)
  \end{align} is a holomorphic first integral of $\mathcal{D}$. Observe that $\partial_z H_{\mathcal{D}}=T$, 
  so that $H_{\mathcal{D}}$ is a submersion.
   If $\kappa=0$, that is, if $W_\mathcal{D}= \mathbb{C}^{N}$, we define $H_{\mathcal{D}}$ as the constant map from $\mathbb{C}^{M+N}$ to the trivial vector space $\mathbb{C}^0$. The holomorphic functions on 
   $(\mathbb{C}^0,0)$ are --- by convention ---  the constants. Our first result states that $H_{\mathcal{D}}$ is a primitive first integral of $\mathcal{D}$, in the sense that any other meromorphic first integral of $\mathcal{D}$ is a composition of $H_\mathcal{D}$ with some meromorphic function.
  
   \begin{thm} \label{teorema1} A germ $F$ of meromorphic function in 
 $(\mathbb{C}^{M+N},0)$ is a meromorphic first integral of $\mathcal{D}$ if and only if  $$F=f\circ H_{\mathcal{D}},$$ where $f$ is a germ of meromorphic function in $(\mathbb{C}^\kappa,0)$. In particular, if $W_\mathcal{D}=\mathbb{C}^N$, the distribution $\mathcal{D}$ has only constant meromorphic first integrals in $(\mathbb{C}^{M+N},0)$.
 \end{thm}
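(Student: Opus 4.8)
The plan is to prove the two implications separately, the easy direction first. If $F = f \circ H_{\mathcal D}$ with $f$ meromorphic on $(\mathbb C^\kappa,0)$, then since each component $h_j = T_j(z) - g_j(x)$ of $H_{\mathcal D}$ is (by construction) an elementary first integral of $\mathcal D$ — that is, $dh_j$ vanishes on $\mathcal D$ — and $\mathcal D$-tangency is preserved under holomorphic functional composition (away from the polar locus of $f$), we get that $F$ is a meromorphic first integral. This requires only the chain rule $dF = \sum_j (\partial_j f) \, dh_j$ and the observation that a common level of the $h_j$ is a union of leaves of $\mathcal D$.

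The substantive direction is the converse. First I would reduce to the case where $F$ is holomorphic: writing $F = P/Q$ in lowest terms, both $P$ and $Q$ are themselves first integrals of $\mathcal D$ (this is a standard fact — the indeterminacy locus is $\mathcal D$-invariant and $\mathcal D$ is tangent to the fibers of $F$, hence to those of $P$ and $Q$), so it suffices to show every \emph{holomorphic} first integral factors through $H_{\mathcal D}$. Next, the heart of the matter: I want to show that the fibers of $H_{\mathcal D}$ are contained in the fibers of $F$, equivalently that $F$ is constant on each connected level set $\{H_{\mathcal D} = c\}$. Given the abstract of the paper, the key input available is precisely that $\mathcal D$ restricted to a generic level of $H_{\mathcal D}$ is completely non-integrable and carries a tangent vector field $Z$ with a somewhere-dense leaf. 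So on such a level $L$, a holomorphic first integral $F|_L$ of $\mathcal D|_L$ is in particular a first integral of $Z$; being holomorphic and constant along a leaf that is dense in an open subset of $L$, it must be locally constant, hence constant on $L$. This pins down $F$ as a function of $H_{\mathcal D}$ set-theoretically.

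The remaining step is to upgrade "$F$ is constant on the fibers of the submersion $H_{\mathcal D}$" to "$F = f \circ H_{\mathcal D}$ with $f$ holomorphic." This is a routine consequence of $H_{\mathcal D}$ being a holomorphic submersion: choose holomorphic coordinates $(u,v) \in (\mathbb C^\kappa,0)\times(\mathbb C^{M+N-\kappa},0)$ in which $H_{\mathcal D}(u,v) = u$; constancy of $F$ on fibers means $F$ does not depend on $v$, so $f(u) := F(u,0)$ is the desired holomorphic germ. One caveat to handle carefully: the complete non-integrability / dense-leaf statement holds only for a \emph{generic} level of $H_{\mathcal D}$, so strictly one first concludes $F = f \circ H_{\mathcal D}$ on the open dense set swept out by good levels, and then extends $f$ (and the identity) across the remaining levels by continuity/Riemann-type extension, using that $F$ and $H_{\mathcal D}$ are globally defined holomorphic germs.

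I expect the main obstacle to be the middle step — rigorously invoking the somewhere-dense leaf of $Z$ inside a generic level to force a holomorphic $\mathcal D$-first-integral to be constant there, and in particular verifying that the "generic level" condition is an open dense (not merely dense) condition so that the final factorization and extension argument goes through cleanly. The reduction to holomorphic $F$ and the submersion-coordinates endgame are standard; everything hinges on transporting the dynamical density statement (presumably the second main theorem of the paper) into this rigidity conclusion.
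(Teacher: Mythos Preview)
Your overall strategy is the right one and matches the paper's: invoke Theorem~\ref{teorema2} to get a Legendrian vector field $Z$ whose leaves are dense in the levels of $H_{\mathcal D}$ on some open set $\Delta^*$, conclude that $F$ is constant on those levels, and then factor through the submersion $H_{\mathcal D}$ via a local section. The endgame you sketch (choosing a holomorphic section $g$ with $H_{\mathcal D}\circ g=\id$ and setting $f=F\circ g$) is exactly what the paper does.

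However, your reduction to the holomorphic case contains a genuine error. The claim that if $F=P/Q$ is a meromorphic first integral with $P,Q$ coprime then $P$ and $Q$ are themselves first integrals is \emph{false}, and the justification you give does not establish it. Tangency of $\mathcal D$ to the fibers of $F$ gives invariance of the pencil $\{P-cQ=0\}$, in particular of $\{P=0\}$ and $\{Q=0\}$; but invariance of the single hypersurface $\{P=0\}$ is far weaker than $P$ being a first integral, which requires \emph{all} levels $\{P=c\}$ to be invariant. Algebraically, $X(P/Q)=0$ with $\gcd(P,Q)=1$ only yields $X(P)=hP$ and $X(Q)=hQ$ for some holomorphic $h$, not $h=0$. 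Concretely, take $M=2$, $N=2$, $\omega_1=\omega_2=0$, so $\mathcal D=\{dz_1=dz_2=0\}$ and $F=z_1/z_2$ is a first integral; the coprime representation $P=(1+x_1)z_1$, $Q=(1+x_1)z_2$ has $\partial_{x_1}P=z_1\neq 0$, so $P$ is not a first integral. More classically, for the radial field $x\partial_x+y\partial_y$ the first integral $y/x$ has neither $y$ nor $x$ constant on leaves.

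The good news is that this reduction is entirely unnecessary. The paper works directly with the meromorphic $F$: outside the indeterminacy set, $F$ is constant along leaves of $Z|_{\Delta^*}$, hence (by density, Theorem~\ref{teorema2}) constant along the levels of $H_{\mathcal D}|_{\Delta^*}$; then $f:=F\circ g$ is a \emph{meromorphic} germ on $(\mathbb C^\kappa,0)$ and the identity $F=f\circ H_{\mathcal D}$ holds on an open set, hence everywhere as meromorphic germs. Simply drop your first paragraph of the hard direction and run the rest of your argument with $F$ meromorphic throughout. Your worry about ``generic levels'' is also milder than you fear: Theorem~\ref{teorema2} gives density for \emph{every} leaf in the open set $\Delta^*$, not merely for generic ones, so no extension step across bad levels is needed.
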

 \begin{rem}\label{merosub}As a direct consequence of Theorem \ref{teorema1} we have the following fact: 
  if $\mathcal{D}$ has a non-constant meromorphic first integral, then  $\mathcal{D}$ has a holomorphic first integral that is a submersion.
 \end{rem}
  
Let $F$ be   a meromorphic first integral of $\mathcal{D}$. Essentially, the proof of Theorem \ref{teorema1} is equivalent to showing that
  $F$ is constant along the levels of $H_{\mathcal{D}}$. This fact is a direct consequence of Theorem \ref{teorema2} below, which 
  guarantees the existence of holomorphic vector fields tangent to $\mathcal{D}$ having leaves that  are somewhere dense in the levels of $H_{\mathcal{D}}$. For ease of exposition, it will be convenient to adopt the following convention.
  
\begin{defn}Let $\mathcal{D}$ be a holomorphic distribution  on a complex manifold $\mathcal{V}$. Let $\mathcal{F}$ be a singular holomorphic foliation. We say that $\mathcal{F}$ is a Legendrian foliation for $\mathcal{D}$ if its leaves are tangent to $\mathcal{D}$. If $\dim\mathcal{F}=1$ and $\mathcal{F}$ is generated by a holomorphic vector field $Z$, we also say that $Z$ is Legendrian for $\mathcal{D}$. 
\end{defn}
   
If $\mathcal{D}$ is a contact structure on $\mathcal{V}$, the leaves of a Legendrian foliation $\mathcal{F}$ are injectively immersed  holomorphic isotropic varieties for $\mathcal{D}$. If in addition we have  $2\dim\mathcal{F}=\dim\mathcal{D}$, the leaves of $\mathcal{F}$ are  injectively immersed holomorphic Legendrian manifolds for $\mathcal{D}$. For more details on Legendrian varieties see for instance \cite{Arnold1}, \cite{Arnold2} and \cite{Buczynski}.
From now on and for the sake of simplicity, we  say holomorphic foliation to mean singular holomorphic foliation.

  \begin{thm}\label{teorema2} Let  $\mathcal{D}$ and $H_{\mathcal{D}}$ be defined by equations \ref{mainsystem} and \ref{defh} repectively. Then $\mathcal{D}$ supports
   a Legendrian holomorphic  vector field $Z$, defined on the domain of definition of $\mathcal{D}$, such that the following property holds:  given a neighborhood $\Delta$ of the origin in  $\mathbb{C}^{M+N}$,  
   there is a nonempty open set $\Delta^*\subset\Delta$  such that each leaf $L$
   of $Z|_{\Delta^*}$ is dense in the level of ${H_{\mathcal{D}}}|_{\Delta^*}$ containing $L$. In particular, if  $W_{\mathcal{D}}=\mathbb{C}^N$, the leaves 
   of $Z|_{\Delta^*}$  are dense in $\Delta^*$. Moreover, if $M=2$, there exists a hypersurface 
   $\mathcal{S}$ through $0\in \mathbb{C}^{2+N}$ invariant by $Z$   such that, given $\Delta$,  the set $\Delta^*$ can be chosen 
   in the form $\Delta^*=\mathcal{B}\backslash \mathcal{S}$  
   for some neighborhood $\mathcal{B}$  of the origin.
   
   \end{thm}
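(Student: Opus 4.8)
The plan is to construct the Legendrian vector field $Z$ explicitly and then analyze its dynamics on the levels of $H_{\mathcal{D}}$. Since $\mathcal{D}$ is cut out by the system $dz_j=\omega_j$, a vector field tangent to $\mathcal{D}$ is of the form $Z=\sum_i a_i(x)\partial_{x_i}+\sum_j b_j(x)\partial_{z_j}$ where $b_j=\omega_j(\sum_i a_i\partial_{x_i})=\sum_i a_i\,\omega_j(\partial_{x_i})$; in other words $Z$ is completely determined by its "horizontal part" $X=\sum_i a_i(x)\partial_{x_i}$ on $(\mathbb{C}^M,0)$, the $z$-components being forced. Thus the flow of $Z$ projects to the flow of $X$, and along a trajectory the $z_j$ coordinate evolves by $\dot z_j=\int \omega_j$ over the projected path. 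The first step is therefore to choose $X$ on $(\mathbb{C}^M,0)$ whose generic leaf is somewhere dense; for $M=2$ this is the classical fact that a generic holomorphic vector field (e.g. one with a singularity at $0$ whose linear part has an eigenvalue ratio in $\mathbb{C}\setminus\mathbb{R}$, a Poincaré–Dulac / Siegel-type node, or more robustly a field with a non-solvable holonomy or with dense orbits on a transversal) has leaves that accumulate on a full open set off its separatrix; for $M\ge 3$ one takes a suitable product or a generic linear field with $\mathbb{Z}$-independent eigenvalues so that the orbit closures contain open sets.

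The key structural point is that $H_{\mathcal{D}}$ is, by construction, a first integral of $\mathcal{D}$ and hence of $Z$, so each leaf of $Z$ lies in a single level of $H_{\mathcal{D}}$; since $\partial_z H_{\mathcal{D}}=T$ is surjective, the levels are smooth of dimension $M+N-\kappa$ and are graphs over the $(x,z')$-space for an appropriate splitting $\mathbb{C}^N=\ker T\oplus(\text{complement})$. On such a level the dynamics of $Z$ is conjugate to the dynamics of the "reduced" vector field obtained by letting $x$ evolve under $X$ and letting the $\ker T$-part of $z$ (say $w\in\mathbb{C}^{N-\kappa}$) evolve by $\dot w=$ (the $\ker T$-component of) $\int\omega$. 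The essential claim is that for a suitably chosen $X$ this skew-product over $X$ has somewhere-dense leaves. Here one uses that $W_{\mathcal{D}}$ is spanned by the Taylor coefficients of $d\omega$: modulo $W_{\mathcal{D}}$ the forms $\omega_j$ are closed only up to the span already quotiented out, so the "drift" in the $w$-direction produced by the periods $\oint\omega$ (the holonomy of the $w$-fibration over the leaves of $X$) is genuinely nontrivial — it is exactly the failure of integrability that Theorem~\ref{teorema1}/the definition of $\kappa$ encodes. One shows that by choosing $X$ with rich recurrence (dense orbits accumulating on $0$) the induced period map generates a dense subgroup of $\mathbb{C}^{N-\kappa}$ acting on the fibers, whence the leaves are dense in the level; shrinking to $\Delta^*$ only removes the separatrix of $X$ and the polar/singular locus.

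The main obstacle I expect is precisely this density statement for the skew-product: controlling the subgroup of $\mathbb{C}^{N-\kappa}$ generated by the periods $\oint_\gamma \omega$ as $\gamma$ ranges over the (homotopically rich) loops in leaves of $X$, and ensuring this subgroup is dense rather than lying in a proper closed subgroup. One handles this by computing, via Stokes, $\oint_\gamma\omega$ in terms of integrals of $d\omega$ over $2$-chains, expanding in the Taylor series \eqref{ecudo}; the coefficients $c_K^{ij}$ then appear linearly, and by definition their span has codimension $\kappa$ in $\mathbb{C}^N$, i.e. spans $\ker T\cong\mathbb{C}^{N-\kappa}$ after projection — so the periods fill out all of $\mathbb{C}^{N-\kappa}$ linearly, and a Baire/transcendence-of-eigenvalues argument upgrades "linearly spanning" to "topologically dense subgroup". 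For the refinement when $M=2$: here $X$ is a germ of a planar vector field, its separatrix $\Sigma$ is (after possibly a blow-up or by direct normal-form choice) a single smooth curve, and one takes $\mathcal{S}$ to be the $Z$-invariant hypersurface $\{(x,z):x\in\Sigma\}$ (which is invariant because $\Sigma$ is $X$-invariant and the $z$-evolution stays fiberwise), so that on $\mathcal{B}\setminus\mathcal{S}$ the field $X$ has no singular orbit and the recurrence used above is available on the whole complement; this gives $\Delta^*=\mathcal{B}\setminus\mathcal{S}$ as claimed. The verification that the explicit $Z$ is holomorphic on the full domain of $\mathcal{D}$ and that all closures/denseness survive restriction to $\Delta^*$ is routine and is carried out at the end.
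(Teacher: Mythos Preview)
Your outline captures the correct architecture: $Z$ is the lift of a vector field $X$ on $(\mathbb{C}^M,0)$, the leaves of $Z$ sit inside the levels of $H_{\mathcal{D}}$, and density on a level reduces to showing that the periods $\oint_\gamma\omega$ (over loops $\gamma$ in leaves of $X$) generate a dense subgroup of $W_{\mathcal{D}}$. The paper follows exactly this architecture. But your concrete choices for $X$ do not work, and this is the heart of the matter.

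The gap is in your proposed choice of $X$. For $M=2$ you suggest a singularity at $0$ with eigenvalue ratio in $\mathbb{C}\setminus\mathbb{R}$; for $M\ge 3$ a generic linear field with $\mathbb{Z}$-independent eigenvalues. In both cases the generic leaf of $X$ is biholomorphic to $\mathbb{C}$ and in particular simply connected, so there are no nontrivial loops $\gamma$ in a leaf and every period $\oint_\gamma\omega$ vanishes. The lifted leaf then meets each fiber $\{p\}\times\mathbb{C}^N$ in a single point and is nowhere close to dense in the level. The density of the leaves of $X$ in $\mathbb{C}^M$ is irrelevant here; what matters is the topology of the leaves, and you need leaves with large fundamental group. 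The remark in Section~\ref{seccionlifting} of the paper makes exactly this point: to get $\oint_\gamma\omega\neq 0$ one needs $\gamma$ non-null-homotopic in its leaf and not bounding a (singular) disc.

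The paper's fix is substantially more elaborate than ``choose $X$ generic''. One takes $X$ so that, after a single blow-up, the strict transform $\mathcal{G}$ has many \emph{resonant} linear singularities $p_\tau$ with integer eigenvector $m=m(\tau)\in\mathbb{Z}^M$ (so the nearby leaves carry the $m$-loops of Section~\ref{seccionlifting}). A resonant singularity with vector $m$ only detects, via Stokes, the coefficients $c_K^{ij}$ with $m\cdot K_{ij}=0$ (Proposition~\ref{retornos}\eqref{item11}); a single resonance therefore cannot recover all of $W_{\mathcal{D}}$. One must place \emph{finitely many} singularities with different resonance vectors $m$, enough to hit a spanning set of coefficients of $d\omega$ (this is the role of the set $\Lambda$ and of Remark~\ref{remark1}), and moreover the linearizing charts must be tangent to the identity to sufficiently high order so that the periods near $p_\tau$ agree with the model computation up to the needed jet (this is item~\eqref{coment} of Proposition~\ref{retornos} and condition~\eqref{con2a} of Proposition~\ref{log}). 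The existence of such an $X$ is the content of Proposition~\ref{log} and is by far the longest part of the paper. Your ``Baire/transcendence'' line does not substitute for this: the issue is not genericity of eigenvalues but the arithmetic matching between the resonances of $X$ and the support of $d\omega$.

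Two further points you gloss over. First, one needs the loops $\gamma$ to be \emph{short} so that their lifts to $\mathcal{D}$ stay inside the given $\Delta$ (Lemma~\ref{levantamiento}); this forces the period-generating loops to live arbitrarily close to the singularities of $\mathcal{G}$, which is why items~\eqref{con3} of Proposition~\ref{log} control lengths. Second, ``the periods linearly span $W_{\mathcal{D}}$'' does not by itself give a dense \emph{additive} subgroup; the paper closes this with Lemma~\ref{base real} (holomorphic spanning $\Rightarrow$ arbitrarily small real bases) and Lemma~\ref{densidad} (a lattice/tessellation argument). Finally, for $M=2$ the invariant hypersurface $\mathcal{S}$ is not a single separatrix but a finite union of lines through the origin (item~\eqref{con4} of Proposition~\ref{log}), reflecting the multiple singularities created by the blow-up.
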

  The proof of this theorem is constructive and, as immediate consequence of this construction --- see Proposition \ref{log}, we have the following additional properties about the vector field $Z$.
  \begin{prop}\label{detalles}Under the same assumptions and notation of Theorem \ref{teorema2}, we have the following additional properties:
  \begin{enumerate}
  \item \label{detalle1}There exists a polynomial vector field $$X(x)=A_1(x)\frac{\partial}{\partial x_1}+\dots +A_M(x)\frac{\partial}{\partial x_M}$$ on $\mathbb{C}^M$, depending  only on a finite jet of the series \eqref{ecudo} of $d\omega$,  such that the vector field $Z$ is given by
  $$Z(x,z)=\sum\limits_{i=1}^MA_i(x)\frac{\partial}{\partial x_i}+
\sum\limits_{j=1}^N\big[\omega_j(x)\cdot X(x)\big]\frac{\partial}{\partial z_j}.$$ 
In particular, $Z$ depends only on the variables $(x_1,\dots,x_M)\in\mathbb{C}^M$. Here the dot product stands for the evaluation of a $1$-form on a vector field.

\item \label{detalle3}There exists a hypersurface $S$, which is a union of hyperplanes through the origin of $\mathbb{C}^M$, that is invariant by $X$ and such that the orbits of $X$ through $\mathbb{C}^M\backslash S$ are everywhere dense.  

\end{enumerate}
  \end{prop}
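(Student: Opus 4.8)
The plan is to read both assertions off the explicit construction of the Legendrian field $Z$ carried out in the proof of Theorem \ref{teorema2}, so I first indicate where each piece comes from. The elementary fact underlying (\ref{detalle1}) is that a vector field tangent to $\mathcal{D}$ is determined by its component in the $x$-directions: if $Z=\sum_iA_i\,\partial_{x_i}+\sum_jB_j\,\partial_{z_j}$ satisfies $(dz_j-\omega_j)(Z)=0$, then, since each $\omega_j$ involves only $dx_1,\dots,dx_M$, one gets $B_j=dz_j(Z)=\omega_j(Z)=\omega_j\!\cdot\!X$ with $X=\sum_iA_i\,\partial_{x_i}$; conversely, for any vector field $X$ on $(\mathbb{C}^M,0)$ the lift $X+\sum_j(\omega_j\!\cdot\!X)\,\partial_{z_j}$ is tangent to $\mathcal{D}$. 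Hence constructing $Z$ is the same as choosing its $x$-part $X$, and to finish (\ref{detalle1}) it remains only to recall from the proof of Theorem \ref{teorema2} that the chosen $X$ is a polynomial vector field built out of finitely many of the Taylor coefficients $c^{ij}_K$ of $d\omega$. That finitely many suffice is forced by the definition of $\kappa$: since $\kappa=\operatorname{codim}W_{\mathcal{D}}$, already a finite subfamily of the $c^{ij}_K$ spans $W_{\mathcal{D}}$, and the recipe for $X$ uses only such a subfamily; thus $X$, and in particular $\deg X$, depend only on a finite jet of the series \eqref{ecudo}, and $Z$, like $X$, depends on $x$ alone.

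For (\ref{detalle3}) one exhibits, still as part of that construction, a central hyperplane arrangement $S\subset\mathbb{C}^M$ invariant by $X$: when $X$ is arranged so that the coordinate hyperplanes $\{x_i=0\}$ are invariant --- i.e.\ $X=\sum_ix_iB_i(x)\,\partial_{x_i}$ --- one takes $S=\bigcup_i\{x_i=0\}$, enlarged if necessary by finitely many further hyperplanes through the origin dictated by the (resonant) structure of $X$. The density of every orbit of $X$ through $\mathbb{C}^M\setminus S$ then says that the foliation by orbits of $X$ is minimal on $\mathbb{C}^M\setminus S$. This is proved by a direct analysis of the flow of $X$, which the construction makes explicitly integrable on $\mathbb{C}^M\setminus S$: the closure of an orbit is thereby reduced to the closure of an orbit of a suitable group action on a product $\mathbb{R}^{a}\times(\mathbb{R}/2\pi\mathbb{Z})^{b}$, and, thanks to an irrationality condition that the construction builds in, every such closure is the whole space. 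When $M=2$ the exceptional locus collapses exactly onto the arrangement $S$, which is what lets Theorem \ref{teorema2} take $\Delta^{*}=\mathcal{B}\setminus\mathcal{S}$ with $\mathcal{S}$ a hypersurface.

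The main obstacle is precisely this minimality statement, which is the technical core of the proof of Theorem \ref{teorema2}. It cannot be reached with a plain linear $X$: for a diagonal linear field the orbits through $(\mathbb{C}^{*})^{M}$ that fail to be dense form a set that is dense but not contained in any complex hypersurface, and for $M\ge 3$ no orbit of a linear field is dense at all, because the image of $t\mapsto(\log|x_1(t)|,\dots,\log|x_M(t)|)$ lies on a real surface in $\mathbb{R}^M$. So the nonlinear part of $X$ --- together with, for $M=2$, the extra hyperplanes in $S$ --- is what makes $S$ large enough to absorb every exceptional orbit while staying a union of hyperplanes through the origin; establishing that the $X$ produced in the proof of Theorem \ref{teorema2} achieves this is where the work lies, and everything else in Proposition \ref{detalles} is a direct consequence of that construction.
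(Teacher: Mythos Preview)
Your overall approach is the paper's own: Proposition~\ref{detalles} is not proved separately but read off the construction carried out in the proof of Theorem~\ref{teorema2} via Proposition~\ref{log}. Your treatment of item~(\ref{detalle1}) is correct and matches the paper: $Z$ is the lift $X^{\mathcal{D}}$ of a vector field $X$ on $\mathbb{C}^M$, and the construction of $X$ uses only a finite set $\mathfrak{M}$ of coefficients of $d\omega$ spanning $W_{\mathcal{D}}$ (plus the degree bound $d=\max\{|L|:\theta_L^{ij}\in\mathfrak{M}\}$), so only a finite jet of \eqref{ecudo} enters.

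For item~(\ref{detalle3}), however, your speculative description of the construction diverges from what the paper actually does. The vector field $X$ is \emph{not} of the form $\sum_i x_iB_i(x)\,\partial_{x_i}$, and $S$ is not built from all coordinate hyperplanes. Rather, $\mathcal{F}$ arises as $\pi_*(\mathcal{G})$ where $\pi$ is the point blow-up at $0\in\mathbb{C}^M$ and $\mathcal{G}$ is the explicit logarithmic foliation of Proposition~\ref{log}\eqref{con1}; the invariant arrangement is
\[
S=\Big\{x_1\cdots x_{M-1}\prod_{j=1}^{P}(x_M-b_jx_1)=0\Big\},
\]
so it contains $\{x_1=0\},\dots,\{x_{M-1}=0\}$ together with the $P$ hyperplanes $\{x_M=b_jx_1\}$ coming from the poles $b_j$ of $\mathcal{G}$ (not $\{x_M=0\}$). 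The density of leaves outside $S$ is not obtained by integrating the flow and reducing to an orbit closure in $\mathbb{R}^a\times(\mathbb{R}/2\pi\mathbb{Z})^b$; instead the paper works upstairs in $\widehat{\mathbb{C}^M}$, where the $t_M$-axis minus $\{p_1,\dots,p_N\}$ is a leaf $\mathbf{C}^*$ of $\mathcal{G}$ whose holonomy on a transversal $\Sigma=\{t_M=a\}$ is explicitly diagonal,
\[
\mathcal{H}_{\gamma_j}(z)=\big(e^{2\pi i\nu_{1j}}z_1,\dots,e^{2\pi i\nu_{(M-1)j}}z_{M-1}\big),
\]
and the parameters $\nu_{ij}$ are chosen (Lemma~\ref{nuca} and \eqref{multiplicative1}) so that a subgroup of this holonomy generates a dense multiplicative subgroup of $(\mathbb{C}^*)^{M-1}$. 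Density of every leaf through $\Sigma^*$ follows, and then transports to $\mathbb{C}^M\setminus S$. Your intuition that an ``irrationality condition'' is built in is correct, but it enters through the holonomy of $\mathbf{C}^*$, not through a direct orbit analysis of $X$; and your remark that a plain linear $X$ cannot work is right and is precisely why the construction passes through the blow-up and produces a genuinely nonlinear polynomial $X$.
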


Observe that, if the forms $\omega_i$ are defined on a neighborhood $U$ of the origin in $\mathbb{C}^M$,   the distribution $\mathcal{D}$ is
defined on the set $$U\times \mathbb{C}^N.$$ Thus, by Proposition \ref{detalles}, the vector field $Z$ is also defined on the set 
$U\times \mathbb{C}^N$. Nevertheless, note that Theorem \ref{teorema2} establishes a local property of the vector field $Z$ near the origin, a fact which can be useful,
as we show with a little example.  Let $\eta$ be a holomorphic
1-form on a neighborhood of the origin in $\mathbb{C}^3$ such that $\eta\wedge d\eta (0)\neq 0$. By Darboux Theorem, the distribution $\eta=0$ near the origin
is equivalent to the one defined by $$dz=xdy.$$ Since this distribution is of type \eqref{mainsystem}, from 
Theorem \ref{teorema2} the following corollary follows. 
\begin{cor}\label{cor1}
Let $\eta$ be a holomorphic
1-form on a neighborhood of the origin in $\mathbb{C}^3$ such that $\eta\wedge d\eta (0)\neq 0$. Then the distribution $\eta=0$ supports a Legendrian holomorphic vector field on a neighborhood  of the 
origin in $\mathbb{C}^3$, having an invariant hypersurface $\mathcal{S}$ through the origin, such that the following property holds:  given any neighborhood $\Delta$ of the origin in $\mathbb{C}^3$, there exists a neighborhood $\Delta^*\subset\Delta$ of the origin in $\mathbb{C}^3$ such that any leaf of $Z$ through a point in $\Delta^*\backslash \mathcal{S}$ is dense in $\Delta^*$. 
\end{cor}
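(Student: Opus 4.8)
The plan is to reduce Corollary \ref{cor1} directly to Theorem \ref{teorema2} via Darboux's theorem, taking care that the statement of Theorem \ref{teorema2} is indeed \emph{local} near the origin, so that transporting it by a germ of biholomorphism is legitimate. First I would invoke the Darboux normal form: since $\eta\wedge d\eta(0)\neq 0$, the $1$-form $\eta$ is nondegenerate at the origin, so there is a germ of biholomorphism $\varphi\colon(\mathbb{C}^3,0)\to(\mathbb{C}^3,0)$, with coordinates $(x,y,z)$ on the target, such that $\varphi^*(dz-x\,dy)$ defines the same germ of distribution as $\eta$ (i.e.\ the two $1$-forms agree up to multiplication by a nonvanishing holomorphic function). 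Equivalently, $\varphi$ conjugates the distribution $\{\eta=0\}$ to the distribution $\mathcal{D}$ given by $dz=x\,dy$, which is exactly of the form \eqref{mainsystem} with $M=2$, $N=1$, $\omega=x\,dy$.

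Next I would apply Theorem \ref{teorema2} to this $\mathcal{D}$. Since $M=2$, the theorem furnishes a Legendrian holomorphic vector field $Z_0$ for $\mathcal{D}$ defined on a neighborhood of $0\in\mathbb{C}^3$, together with an invariant hypersurface $\mathcal{S}_0$ through the origin, such that for every neighborhood $\Delta_0$ of the origin the set $\Delta_0^*$ can be taken of the form $\mathcal{B}_0\setminus\mathcal{S}_0$ for some neighborhood $\mathcal{B}_0$ of $0$, and every leaf of $Z_0|_{\Delta_0^*}$ is dense in the corresponding level of $H_{\mathcal{D}}|_{\Delta_0^*}$. For $dz=x\,dy$ one checks that $d\omega=dx\wedge dy\neq 0$, so $W_{\mathcal{D}}=\mathbb{C}^N=\mathbb{C}$ and $\kappa=0$; hence $H_{\mathcal{D}}$ is the constant map and ``dense in the level'' simply means dense in $\Delta_0^*$. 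Thus every leaf of $Z_0$ through a point of $\mathcal{B}_0\setminus\mathcal{S}_0$ is dense in $\mathcal{B}_0\setminus\mathcal{S}_0$.

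Finally I would push everything back through $\varphi$. Set $Z:=\varphi^*Z_0$ (the pullback vector field, defined on a neighborhood of $0\in\mathbb{C}^3$ where $\varphi$ is a biholomorphism) and $\mathcal{S}:=\varphi^{-1}(\mathcal{S}_0)$, a hypersurface through the origin invariant by $Z$; since $\varphi$ conjugates $\mathcal{D}$ to $\{\eta=0\}$ and $Z_0$ is tangent to $\mathcal{D}$, the field $Z$ is tangent to $\{\eta=0\}$, i.e.\ Legendrian for $\eta=0$. Given a neighborhood $\Delta$ of the origin, shrink it so that $\Delta$ lies in the domain of $\varphi$, let $\Delta_0:=\varphi(\Delta)$, obtain $\mathcal{B}_0$ and $\Delta_0^*=\mathcal{B}_0\setminus\mathcal{S}_0$ as above with $\mathcal{B}_0\subset\Delta_0$, and put $\mathcal{B}:=\varphi^{-1}(\mathcal{B}_0)$, $\Delta^*:=\varphi^{-1}(\Delta_0^*)=\mathcal{B}\setminus\mathcal{S}\subset\Delta$. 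Because $\varphi$ is a homeomorphism carrying leaves of $Z$ to leaves of $Z_0$, density is preserved: any leaf of $Z$ through a point of $\Delta^*\setminus\mathcal{S}=\mathcal{B}\setminus\mathcal{S}$ is dense in $\mathcal{B}\setminus\mathcal{S}$, hence dense in $\Delta^*$. This is precisely the assertion of the corollary.

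I do not anticipate a serious obstacle here: the only points requiring mild care are (i) making sure the Darboux equivalence is an equivalence of distributions, not merely of $1$-forms up to scaling — but tangency of a vector field to a distribution is insensitive to rescaling the defining form, so this is automatic; and (ii) keeping all the neighborhoods nested correctly so that the produced $\Delta^*$ genuinely sits inside the given $\Delta$, which is handled by shrinking $\Delta$ into the domain of $\varphi$ at the outset. The substantive content is entirely in Theorem \ref{teorema2}; Corollary \ref{cor1} is a transport-of-structure statement.
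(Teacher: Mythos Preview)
Your approach is correct and matches the paper's: reduce to $dz=x\,dy$ by Darboux, apply Theorem~\ref{teorema2} with $M=2$ (noting $d\omega=dx\wedge dy$ forces $W_{\mathcal D}=\mathbb C$, so $H_{\mathcal D}$ is constant), and transport back by the biholomorphism. One small slip: to match the corollary's wording you should take $\Delta^*:=\mathcal B$ (a genuine neighborhood of $0$) rather than $\mathcal B\setminus\mathcal S$; then density of the leaf in $\mathcal B\setminus\mathcal S$ together with $\mathcal S$ being a hypersurface gives density in $\Delta^*=\mathcal B$, exactly as you indicate.
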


Now, we consider distributions $\mathcal{D}$ that are  globally defined. If such a distribution is completely non-integrable, we show that it supports Legendrian holomorphic one-dimensional foliations with dense generic leaves.
  \begin{thm}\label{teorema3} Assume that the holomorphic 1-forms $\omega_i$ in the system  \eqref{mainsystem}
extend as meromorphic 1-forms on $\mathbb{C}^M$.  Let $(\omega)_\infty$ be the union of pole sets of the $\omega_i$, let $Z$  
be as given by Theorem \ref{teorema2}, and $S$ as given by Proposition \ref{detalles}. Then we have the following properties:
\begin{enumerate}
\item \label{item111} $Z$ extends as a meromorphic vector field on
$\mathbb{C}^{M+N}$. If the forms $\omega_i$
  are rational (resp. polynomial), the vector field $Z$ is also rational (resp. polynomial). 
  \item \label{item222} If  $W_{\mathcal{D}}=\mathbb{C}^N$, then  each leaf of $Z$ passing through a point
  $$(x,z)\in\mathbb{C}^{M+N},\; x\notin S\cup (\omega)_\infty$$ is dense in $\mathbb{C}^{M+N}$. 
  \end{enumerate}
  \end{thm}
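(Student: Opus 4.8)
The plan is as follows. The first assertion needs essentially no work: by the first item of Proposition~\ref{detalles} one may write
\[
Z=\sum_{i=1}^{M}A_i(x)\frac{\partial}{\partial x_i}+\sum_{j=1}^{N}\bigl[\omega_j(x)\cdot X(x)\bigr]\frac{\partial}{\partial z_j},
\]
with the $A_i$ polynomial; hence each coefficient $\omega_j\cdot X$ is a finite linear combination of the coefficients of $\omega_j$ with polynomial weights, so it is a meromorphic function on $\mathbb{C}^M$ whose pole set is contained in $(\omega)_\infty$, and it is rational (resp.\ polynomial) whenever $\omega_j$ is. Regarded as functions of $(x,z)$ not depending on $z$, these give the desired meromorphic (resp.\ rational, polynomial) extension of $Z$ to $\mathbb{C}^{M+N}$, with pole set contained in $(\omega)_\infty\times\mathbb{C}^N$.

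For the second assertion, the crucial structural remark I would exploit is that, since the coefficients of $Z$ depend only on $x$, the vector field $Z$ --- hence the foliation it generates --- is invariant under every vertical translation $\tau_c\colon(x,z)\mapsto(x,z+c)$, $c\in\mathbb{C}^N$; in fact $(\tau_c)_*Z=Z$. Write $\pi\colon\mathbb{C}^{M+N}\to\mathbb{C}^M$ for the projection and set $\mathcal{R}=\bigl(\mathbb{C}^M\setminus(S\cup(\omega)_\infty)\bigr)\times\mathbb{C}^N$, on which $Z$ is holomorphic and nowhere vanishing (its zero set is $\{X=0\}\times\mathbb{C}^N\subset S\times\mathbb{C}^N$), so it defines there a nonsingular holomorphic foliation. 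Applying Theorem~\ref{teorema2} to a ball around the origin and using $W_{\mathcal{D}}=\mathbb{C}^N$ produces a nonempty open set $\Delta^*$ with every leaf of $Z|_{\Delta^*}$ dense in $\Delta^*$. The key step is then to promote this to: \emph{every leaf of $Z|_{\mathcal{U}}$ is dense in $\mathcal{U}$}, where $\mathcal{U}=\pi(\Delta^*)\times\mathbb{C}^N=\bigcup_{c\in\mathbb{C}^N}\tau_c(\Delta^*)$ and $\pi(\Delta^*)$ is open and nonempty. To prove this I would fix a leaf $L_0$ of $Z|_{\mathcal{U}}$ and study $C=\{c\in\mathbb{C}^N:\overline{L_0}\supseteq\tau_c(\Delta^*)\}$: because $(\tau_c)_*Z=Z$, as soon as $L_0$ meets the open set $\tau_c(\Delta^*)$, the leaf of $Z|_{\tau_c(\Delta^*)}$ through the meeting point is dense in $\tau_c(\Delta^*)$, so $c\in C$; hence $C\neq\varnothing$, and if $c\in C$ and $\tau_c(\Delta^*)\cap\tau_{c'}(\Delta^*)\neq\varnothing$, then $\overline{L_0}$ meets the open set $\tau_{c'}(\Delta^*)$, so $L_0$ does, so $c'\in C$. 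Thus $C$ is open and closed in the connected set $\mathbb{C}^N$, whence $C=\mathbb{C}^N$ and the claim follows.

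Granting the claim, the rest is a bootstrap. Let $L$ be the leaf of $Z$ through $(x_0,z_0)$ with $x_0\notin S\cup(\omega)_\infty$, so $(x_0,z_0)\in\mathcal{R}$. Since $\pi_*Z=X$ and the $z$-increment of a $Z$-lift of a path contained in the orbit $O_{x_0}$ of $X$ equals the $\omega$-integral over that path --- which stays bounded as long as the path avoids $(\omega)_\infty$ --- one obtains $\pi(L)\supseteq O_{x_0}\setminus(\omega)_\infty$, which is dense in $\mathbb{C}^M$ by the second item of Proposition~\ref{detalles}. Hence $L$ meets $\pi^{-1}(\pi(\Delta^*))=\mathcal{U}$, so $\overline{L}\supseteq\mathcal{U}$ by the claim. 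Choosing $x_2\in\pi(\Delta^*)\setminus(S\cup(\omega)_\infty)$ we get $\pi^{-1}(x_2)\subset\mathcal{U}\subseteq\overline{L}$; since $\overline{L}\cap\mathcal{R}$ is a union of leaves, the leaf through $(x_2,w)$ lies in $\overline{L}$ for every $w\in\mathbb{C}^N$, and lifting a fixed path in $O_{x_2}$ from $x_2$ to a fixed point $y\in O_{x_2}\setminus(\omega)_\infty$ through all of these leaves gives $\{y\}\times\mathbb{C}^N\subseteq\overline{L}$. Therefore $\overline{L}\supseteq(O_{x_2}\setminus(\omega)_\infty)\times\mathbb{C}^N$, which is dense in $\mathbb{C}^{M+N}$ because $O_{x_2}$ is dense in $\mathbb{C}^M$; so $\overline{L}=\mathbb{C}^{M+N}$, as desired.

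I expect the main obstacle to be precisely the displayed claim in the second paragraph: turning the merely local density provided by Theorem~\ref{teorema2} on the unidentified open set $\Delta^*$ into genuine density on the full cylinder $\pi(\Delta^*)\times\mathbb{C}^N$. This is where the vertical translation invariance of $Z$ and the connectedness of $\mathbb{C}^N$ do the real work; once it is in hand, the density of the orbits of $X$ (Proposition~\ref{detalles}) propagates the conclusion to all of $\mathbb{C}^{M+N}$ with only routine path-lifting and closure-of-leaf arguments.
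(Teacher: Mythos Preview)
Your proof is correct, and for item~\eqref{item111} it coincides with the paper's one-line appeal to Proposition~\ref{detalles}. For item~\eqref{item222} you take a genuinely different route at the crucial step of ``filling out'' the $z$-direction.

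The paper does not use translation invariance. Instead it invokes Remark~\ref{remark2}, an internal fact from the proof of Theorem~\ref{teorema2}: once $\Delta_1$ and $\epsilon=1$ are fixed, the base set $\mathscr{U}\subset\Delta_1$ is independent of the radius $r$ of $\Delta_2$ (for $r>6\mathfrak{K}$), so one may enlarge $r$ at will while keeping $\mathscr{U}$ fixed. Given an arbitrary target open set $U$, the paper first lifts paths in the $X$-orbit to bring \emph{both} the given leaf $\mathcal{L}$ and a leaf $\mathcal{L}'$ meeting $U$ to points $\zeta,\zeta'$ over $\mathscr{U}$; then it simply chooses $r$ large enough that $\zeta,\zeta'\in\Delta''=\mathscr{U}\times\{|z|<r/3\}\subset\Delta^*$, and the local density in $\Delta^*$ immediately gives $\zeta'\in\overline{\mathcal{L}}$, hence $\mathcal{L}'\subset\overline{\mathcal{L}}$.

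Your argument replaces this reliance on Remark~\ref{remark2} by the observation $(\tau_c)_*Z=Z$ and the clean open--closed propagation in $\mathbb{C}^N$, which lets you treat Theorem~\ref{teorema2} as a black box and obtain density on the whole cylinder $\pi(\Delta^*)\times\mathbb{C}^N$. The price is the extra bootstrap at the end (saturating $\overline{L}$ by leaves and lifting over $O_{x_2}$) to spread density from the cylinder to all of $\mathbb{C}^{M+N}$; the paper avoids this second pass because it targets the arbitrary open set $U$ from the outset. Either way, both proofs rest on the same two ingredients --- density of $X$-orbits off $S$ and path-lifting away from $(\omega)_\infty$ --- and your version has the virtue of not peeking inside the construction of $\Delta^*$. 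One minor stylistic remark: your final bootstrap could be shortened by noting directly that any leaf $L'$ through a point of $\mathcal{R}$ also meets $\mathcal{U}$, hence $L'$ meets $\overline{L}\supseteq\mathcal{U}$, so $L'\subset\overline{L}$; this recovers the paper's symmetry and avoids the second round of explicit lifts.
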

 As an easy consequence we have the following corollary.
\begin{cor}\label{isotrop} Consider coordinates $(x_1,\dots,x_M,z)$ in $\mathbb{C}^{M+1}$.
Let $\mathcal{D}$ be the distribution defined  by the equation
$$dz=\omega,$$ where $\omega$ is a rational non-closed 1-form in the variables
$(x_1,\dots,x_M)$. Then $\mathcal{D}$ supports a Legendrian polynomial  vector field whose generic leaf is dense in
 $\mathbb{C}^{M+1}$. In particular,
    if $M=2m$ and $\xi$ is the contact distribution on $\mathbb{CP}^{2m+1}$ defined by Equation \eqref{contact}, then
     $\xi$ supports a Legendrian one-dimensional holomorphic foliation whose generic leaf is dense in 
     $\mathbb{CP}^{2m+1}$. 
  \end{cor}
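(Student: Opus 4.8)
The plan is to read the statement off from Theorem \ref{teorema3} applied with $N=1$, and then to globalize the resulting affine foliation to $\mathbb{CP}^{2m+1}$ by the standard projective extension of foliations.

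First I would set $N=1$ and $\omega_1=\omega$. Since $\omega$ is non-closed we have $d\omega\not\equiv 0$, so after an affine translation we may assume that $0$ is not on the polar set of $\omega$ and that the expansion \eqref{ecudo} of $d\omega$ at $0$ has some nonzero coefficient $c^{ij}_K\in\mathbb{C}^N=\mathbb{C}$. A nonzero vector of the one-dimensional space $\mathbb{C}$ spans it, hence $W_{\mathcal{D}}=\mathbb{C}=\mathbb{C}^N$. Therefore Theorem \ref{teorema3} applies: by item \eqref{item111} the Legendrian vector field $Z$ of Theorem \ref{teorema2} is rational (and polynomial when $\omega$ is polynomial), and by item \eqref{item222} every leaf of $Z$ through a point $(x,z)$ with $x\notin S\cup(\omega)_\infty$ is dense in $\mathbb{C}^{M+1}$.

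To obtain a \emph{polynomial} Legendrian field I would clear denominators. By Proposition \ref{detalles}\eqref{detalle1} we have $Z=\sum_i A_i(x)\,\partial/\partial x_i+(\omega\cdot X)\,\partial/\partial z$ with $X=\sum_i A_i\,\partial/\partial x_i$ polynomial; writing $\omega=\sum_k b_k(x)\,dx_k$ with $b_k$ rational and choosing a common denominator $q(x)$ of the $b_k$, the vector field $\widetilde Z:=q\,Z$ is polynomial and is again Legendrian for $\mathcal{D}$. On $\mathbb{C}^{M+1}\setminus\{q=0\}$ the set-theoretic leaves of $\widetilde Z$ coincide with those of $Z$, so the generic leaf of $\widetilde Z$ is still dense in $\mathbb{C}^{M+1}$; this proves the first assertion. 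For the contact case one takes $M=2m$ and $\omega=\sum_i (y_i\,dx_i-x_i\,dy_i)$, which is polynomial and non-closed since $d\omega=-2\sum_i dx_i\wedge dy_i\neq 0$, so $Z$ is already polynomial and no clearing is needed.

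Finally I would globalize. In the affine chart $\mathbb{C}^{2m+1}\subset\mathbb{CP}^{2m+1}$ with coordinates $(x_1,y_1,\dots,x_m,y_m,z)$ the canonical structure $\xi$ is $dz=\omega$, so the polynomial field $Z$ from the previous step is Legendrian for $\xi$ there; it generates a one-dimensional singular holomorphic foliation which extends to a singular holomorphic foliation $\mathcal{F}$ on all of $\mathbb{CP}^{2m+1}$. Its leaves inside the affine chart are the orbits of $Z$, so a generic leaf is dense in $\mathbb{C}^{2m+1}$ and therefore in $\mathbb{CP}^{2m+1}$. It remains to check that $\mathcal{F}$ is Legendrian for $\xi$ on all of $\mathbb{CP}^{2m+1}$: the inclusion $T_p\mathcal{F}\subset\xi_p$ holds on the smooth locus of $\mathcal{F}$ inside the affine chart, which is dense, and since $\xi$ is a globally defined holomorphic subbundle this inclusion persists on the whole smooth locus of $\mathcal{F}$ by the identity principle. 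The only point requiring genuine care is precisely this last globalization step — verifying that neither tangency to $\xi$ nor density of the generic leaf is lost at the hyperplane at infinity — while everything else is a direct specialization of the already-proved Theorems \ref{teorema2} and \ref{teorema3} and Proposition \ref{detalles}.
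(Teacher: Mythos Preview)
Your proposal is correct and follows essentially the same approach as the paper: both observe that $N=1$ and $d\omega\neq 0$ force $W_{\mathcal{D}}=\mathbb{C}$, and then invoke Theorem~\ref{teorema3} directly. You actually supply more detail than the paper does --- the clearing of denominators to pass from rational to polynomial, and the projective extension together with the identity-principle argument for preserving tangency to $\xi$ --- whereas the paper's proof is a two-line application of Theorem~\ref{teorema3} that leaves these points implicit.
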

 
In relation with the contact structure $\xi$ defined by Equation \eqref{contact}, the leaves of the foliation given by Corollary \ref{isotrop} are examples of  injectively immersed isotropic open holomorphic curves that are everywhere dense, although Corollary \ref{isotrop} does not specify  the analytic type of these curves. Recently, in \cite[Corollary 6.11]{Alarcon2} the authors construct examples of  everywhere dense injectively immersed Legendrian open curves
of arbitrary analytic type in odd dimensional complex projective spaces.
 A slightly more elaborated application of our results allow us to obtain examples of  Legendrian $m$-dimensional
  holomorphic  foliation for $\xi$  whose generic leaves are dense  in $\mathbb{CP}^{2m+1}$ --- so we obtain
    examples of injectively immersed Legendrian holomorphic open manifolds for $\xi$ that are everywhere dense.

   \begin{thm}\label{legendrian} Let  $\xi $ be the canonical contact distribution on $\mathbb{CP}^{2m+1}$ given by Equation \eqref{contact}.  Then $\xi$ supports a Legendrian  $m$-dimensional holomorphic foliation whose generic leaf is dense in $\mathbb{CP}^{2m+1}$. 
  \end{thm}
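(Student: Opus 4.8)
The plan is to build the $m$-dimensional foliation out of $m$ commuting copies of the two-dimensional construction already supplied by Theorems \ref{teorema2} and \ref{teorema3}. Write the canonical contact structure in the affine chart $\mathbb{C}^{2m+1}$ with coordinates $(x_1,y_1,\dots,x_m,y_m,z)$, so that $\xi$ is the projectivization of the distribution $\mathcal{D}\colon dz=\omega$ with $\omega=\sum_{i=1}^m\omega^{(i)}$ and $\omega^{(i)}=y_i\,dx_i-x_i\,dy_i$. Since $d\omega=-2\sum_i dx_i\wedge dy_i\neq 0$, this is the case $N=1$, $W_{\mathcal{D}}=\mathbb{C}^N$, $\kappa=0$, so Corollary \ref{isotrop} already yields a dense Legendrian \emph{curve}; the point is to produce an $m$-dimensional leaf.

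For each $i\in\{1,\dots,m\}$ I would apply Theorem \ref{teorema2}, Proposition \ref{detalles} and Theorem \ref{teorema3} to the two-variable system $dz=\omega^{(i)}$ on $\mathbb{C}^3_{(x_i,y_i,z)}$. This gives a polynomial vector field $X_i=A_i(x_i,y_i)\partial_{x_i}+B_i(x_i,y_i)\partial_{y_i}$ on $\mathbb{C}^2$, a union of lines $S_i$ through $0$ invariant by $X_i$ with every $X_i$-orbit through $\mathbb{C}^2\setminus S_i$ dense in $\mathbb{C}^2$, and the polynomial Legendrian field $Z_i^{(3)}=X_i+\big(\omega^{(i)}\!\cdot X_i\big)\partial_z$ on $\mathbb{C}^3$, every leaf of which through a point with $x_i\notin S_i$ is dense in $\mathbb{C}^3$. (Because $\omega^{(i)}$ is the same $1$-form in each pair of variables, the $X_i$ are literally the same vector field.) Now put $f_i:=\omega^{(i)}\!\cdot X_i$, a polynomial in $(x_i,y_i)$ alone, and define on $\mathbb{C}^{2m+1}$ the polynomial vector fields $Z_i:=X_i+f_i\,\partial_z$. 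A direct check, using for $i\neq j$ that the pairs $(x_i,y_i)$ and $(x_j,y_j)$ are disjoint and that nothing depends on $z$, gives $(dz-\omega)(Z_i)=0$, $[Z_i,Z_j]=0$, and linear independence of $Z_1,\dots,Z_m$ off a proper analytic subset (already true for their $x$-parts). Hence $Z_1,\dots,Z_m$ span an integrable holomorphic distribution of rank $m$, defining a singular holomorphic foliation $\mathcal{F}$ on $\mathbb{CP}^{2m+1}$ (the $Z_i$ being polynomial, they extend) whose leaves are tangent to $\xi$; as $\dim\mathcal{F}=m=\tfrac12\dim\xi$, $\mathcal{F}$ is Legendrian.

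It remains to see that a generic leaf of $\mathcal{F}$ is dense. Fix a base point $p=(p_1,\dots,p_m,z_0)$ with $p_i\notin S_i$ for every $i$; by commutativity the leaf $L$ through $p$ is $\{\Phi^1_{t_1}\!\circ\cdots\circ\Phi^m_{t_m}(p):(t_1,\dots,t_m)\in\mathbb{C}^m\}$, where $\Phi^i$ is the flow of $Z_i$, its $(x_i,y_i)$-component equals the $X_i$-orbit $\phi^i_{t_i}(p_i)$, and its last coordinate equals $z_0+\sum_{i=1}^m g_i(t_i)$ with $g_i(t_i)=\int_0^{t_i} f_i(\phi^i_s(p_i))\,ds$; moreover $t_i\mapsto(\phi^i_{t_i}(p_i),g_i(t_i))$ parametrizes the leaf of $Z_i^{(3)}$ through $(p_i,0)$, which is dense in $\mathbb{C}^3$. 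To approximate an arbitrary target $(q_1,\dots,q_m,w)$ I would first choose $t_2,\dots,t_m$ with $\phi^i_{t_i}(p_i)$ close to $q_i$ (density of the $X_i$-orbits), record $c:=\sum_{i=2}^m g_i(t_i)$, and then, using density in $\mathbb{C}^3$ of the leaf of $Z_1^{(3)}$, choose $t_1$ with $(\phi^1_{t_1}(p_1),g_1(t_1))$ close to $(q_1,\,w-z_0-c)$; the resulting point of $L$ is then close to $(q_1,\dots,q_m,w)$. Thus $L$ is dense in $\mathbb{C}^{2m+1}$, hence in $\mathbb{CP}^{2m+1}$, and its leaves, being leaves of a foliation, are injectively immersed.

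The step I expect to be the main obstacle is exactly this last one: turning density of the $m$ auxiliary three-dimensional leaves into density of the single $m$-dimensional leaf inside the $(2m+1)$-dimensional ambient space. The iterative scheme above is what makes it go through --- only the single increment $g_1(t_1)$ must be steered to a prescribed value, the others being merely recorded --- but one has to be careful that all the approximations can be arranged simultaneously (e.g.\ by first fixing the errors for $i\geq 2$ and only then invoking the $\mathbb{C}^3$-density for $i=1$) and that the set of admissible base points is the complement of a proper analytic subset of $\mathbb{CP}^{2m+1}$.
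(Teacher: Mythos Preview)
Your construction and density argument are essentially the paper's: the paper also lifts a single planar field $X$ to commuting Legendrian fields $Z_1,\dots,Z_m$ and proves density by combining the $\mathbb{C}^3$-density of one $Z_j$-leaf with the planar density of the remaining $X_i$-orbits, only packaging this as an induction on $m$ (slicing $\mathbb{C}^{2(m+1)+1}$ by $x_{m+1}=a,\ y_{m+1}=b$ and invoking the hypothesis on each slice) rather than as your direct sequential scheme. One caution: the polynomial fields $Z_i$ need not be complete, so the global flow parametrization $L=\{\Phi^1_{t_1}\circ\cdots\circ\Phi^m_{t_m}(p)\}$ and the functions $g_i(t_i)=\int_0^{t_i}f_i(\phi^i_s(p_i))\,ds$ are not literally available --- rephrase the steps in terms of leaves (each $Z_i$-leaf projects \emph{onto} the $X_i$-leaf via a covering, and one $Z_1^{(3)}$-leaf is already known dense in $\mathbb{C}^3$), exactly as the paper does.
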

 
It is worth pointing out the relationship between our results and some classical connectivity properties of distributions in the real case.   The problem of local connectivity by curves tangent to a distribution was studied initially in the real case by Carathéodory (codimension one) and by Chow (any codimension), see for instance \cite{Chow} and \cite{Gromov}. Precisely, we have the following theorem.

\begin{thm}[Chow Connectivity Theorem, \cite{Chow}, \cite{Gromov}]
Let $X_1, \dots, X_m$ be $C^{\infty}$ vector fields on a connected manifold $V$, such that successive commutators of these fields span each tangent space $T_v V$, $v \in V$. Then every two points on $V$ can be joined by a piecewise smooth curve in $V$ where each piece is a segment of an integral curve of one of the fields $X_i$. 
\end{thm}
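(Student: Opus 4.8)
The plan is to prove this by the classical orbit-openness argument, reducing the global statement to a local one via connectedness. For $p\in V$, call a path \emph{admissible} if it is a finite concatenation of segments of integral curves of the $X_i$, each traversed in either direction, and let $\mathcal{O}(p)\subseteq V$ be the set of endpoints of admissible paths starting at $p$. Since an admissible path can be reversed and two admissible paths can be concatenated, the relation ``$q\in\mathcal{O}(p)$'' is an equivalence relation whose classes are precisely the sets $\mathcal{O}(p)$; they partition $V$. As $V$ is connected, it suffices to show that each $\mathcal{O}(p)$ is open, for then there is a single class and any two points are joined by an admissible path. Concretely, reachability is generated by the local flows: writing $\phi^i_t$ for the (local) flow of $X_i$, every composition $\Phi_{I}(t_1,\dots,t_k):=\phi^{i_k}_{t_k}\circ\cdots\circ\phi^{i_1}_{t_1}(p)$, with multi-index $I=(i_1,\dots,i_k)$ and $(t_1,\dots,t_k)$ near $0$, takes values in $\mathcal{O}(p)$; no completeness of the $X_i$ is needed, since only small times enter and the argument is purely local.

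The core step is to show that $\mathcal{O}(p)$ has nonempty interior. Among all the maps $\Phi_I$ --- over all multi-indices $I$ of all lengths and all points of their domains --- the rank of the differential attains a maximum $r\le n:=\dim V$, say at a point $t^\ast$ in the domain of some $\Phi_{I_0}$; set $q:=\Phi_{I_0}(t^\ast)$. By lower semicontinuity the rank of $\Phi_{I_0}$ is constantly $r$ near $t^\ast$, so the constant rank theorem shows that the image of $\Phi_{I_0}$ near $t^\ast$ is an embedded $r$-dimensional submanifold $N\ni q$, and $N\subseteq\mathcal{O}(p)$. The maximality of $r$ forces $N$ to be locally invariant under every flow $\phi^j_s$: for a base point $t'$ near $t^\ast$ the map $(t',s)\mapsto\Phi_{(I_0,j)}(t',s)=\phi^j_s(\Phi_{I_0}(t'))$ again has rank $r$ near $(t^\ast,0)$, so its image is an $r$-dimensional submanifold containing an open piece of $N$ together with the flow line through each of its points, hence it agrees with $N$; thus $\phi^j_s$ carries a neighborhood of $q$ in $N$ into $N$. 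Differentiating at $s=0$ shows that every $X_j$ is tangent to $N$ near $q$, and therefore so are all iterated Lie brackets of the $X_j$ (a bracket of vector fields tangent to a submanifold is tangent to it). Since these brackets span $T_qV$ by hypothesis and all lie in $T_qN$, we get $r=\dim T_qN\ge n$, so $r=n$ and $N$ is an open subset of $V$ contained in $\mathcal{O}(p)$.

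It remains to upgrade ``nonempty interior'' to ``open''. Fix $p$ and, by the previous step, choose $q\in\mathcal{O}(p)$ lying in $\operatorname{int}\mathcal{O}(p)$ with an open neighborhood $U\subseteq\mathcal{O}(p)$. Because $q\in\mathcal{O}(p)$ there is a composition of local flows $\psi=\phi^{j_\ell}_{s_\ell}\circ\cdots\circ\phi^{j_1}_{s_1}$, a local diffeomorphism near $p$, with $\psi(p)=q$; its inverse $\psi^{-1}=\phi^{j_1}_{-s_1}\circ\cdots\circ\phi^{j_\ell}_{-s_\ell}$ is defined near $q$, so $\psi^{-1}(U)$ is an open neighborhood of $p$. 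For any $x\in\psi^{-1}(U)$ we have $\psi(x)\in U\subseteq\mathcal{O}(p)$, while applying $\psi^{-1}$ to $\psi(x)$ traces an admissible path from $\psi(x)$ to $x$, so $x\in\mathcal{O}(\psi(x))=\mathcal{O}(p)$. Hence $\psi^{-1}(U)\subseteq\mathcal{O}(p)$, proving that $\mathcal{O}(p)$ is open; combined with connectedness of $V$ this completes the proof. The delicate point, deserving the most care, is the rank-maximality argument of the second paragraph --- namely that the top-rank reachable submanifold $N$ is automatically invariant under all the flows $\phi^j_s$; once that is established, the bracket-generating hypothesis closes the argument at once.
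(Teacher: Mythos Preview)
The paper does not prove this theorem at all: it is stated as a classical result and attributed to \cite{Chow} and \cite{Gromov}, with no argument given. Your proposal is a correct and standard proof --- the orbit-openness argument via maximal-rank flow compositions (essentially the Hermann--Nagano--Sussmann approach) --- so there is nothing in the paper to compare it against. One minor expository point: your final paragraph literally shows only that $p$ itself is interior to $\mathcal{O}(p)$, but since every $p'\in\mathcal{O}(p)$ satisfies $\mathcal{O}(p')=\mathcal{O}(p)$, the same argument applied at $p'$ gives that $\mathcal{O}(p)$ is open, as you claim.
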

  
As a direct consequence we have the total connectivity property by curves tangent to a distribution  $\mathcal{D}$, provided it is completely non-integrable  --- in the sense that vector fields tangent to $\mathcal{D}$ Lie-generate the full tangent space. We will see in Remark \ref{totally-non-integrable} that, for a distribution given by a system of the form \eqref{mainsystem}, this is exactly the case when $H_{\mathcal{D}}$ is a constant function.

With the aid of Chow Connectivity Theorem we can successively connect an arbitrary sequence of points; in this way we can 
construct dense piecewise smooth curves that are tangent to a completely non-integrable distribution $\mathcal{D}$. 
As can be expected, there are smoothing methods which allow us to deform  such  piecewise smooth curves to obtain smooth curves also tangent to the distribution  --- see \cite{Gromov}. Since, in general,  the $C^\infty$ smoothing methods have no parallel in the holomorphic class,
the existence of dense holomorphic curves tangent to completely  non-integrable distributions is
a more complicated problem, which will not be a direct consequence of a holomorphic version 
of the Connectivity Theorem --- 
for a holomorphic version of the Connectivity Theorem in $\mathbb{C}^3$ we refer to \cite{Zorich}. Even more, we can look for the existence of holomorphic vector fields tangent to a totally non-integrable distribution with dense generic leaves. In this direction, Theorem \ref{teorema2} and Corollaries \ref{cor1} and \ref{isotrop} can be regarded as partial
 positive answers.

\section{Lifting vector fields} \label{seccionlifting}
Consider the canonical projection $$\pi_1\colon\mathbb{C}^{M+N}\to\mathbb{C}^{M}.$$  Let $X$ be a holomorphic vector field  on a neighborhood $U$ of the origin in $\mathbb{C}^M$. We assume that $U$ is contained in the domain of definition of the 1-forms $\omega_i$ defining 
the distribution $\mathcal{D}$. The lifting of $X$ to the distribution $\mathcal{D}$ is the vector field $X^{\mathcal{D}}$ on 
$U\times\mathbb{C}^{N}$ that is tangent to $\mathcal{D}$ and 
satisfies the equation 
$$d\pi_1\cdot X^{\mathcal{D}}=X.$$ If $$X=A_1\frac{\partial}{\partial x_1}+\dots+ A_M\frac{\partial}{\partial x_M},$$ the lifting
 $X^{\mathcal{D}}$  is  explicitly given by 
$$X^{\mathcal{D}}=\sum\limits_{i=1}^MA_i(\pi_1)\frac{\partial}{\partial x_i}+
\sum\limits_{j=1}^N\big[\omega_j(\pi_1)\cdot X(\pi_1)\big]\frac{\partial}{\partial z_j}.$$  If $\mathcal{F}$ is the holomorphic foliation defined by $X$, we say that the holomorphic foliation defined by $X^{\mathcal{D}}$ is the lifting of $\mathcal{F}$ to $\mathcal{D}$ and denote it by ${\mathcal{F}}^{\mathcal{D}}$ 
--- this foliation is a Legendrian one-dimensional holomorphic foliation for $\mathcal{D}$

The holomorphic vector $Z$ given by Theorem \ref{teorema2} will be constructed as the lifting of some vector field $X$ as above. Thus, the main task of our work is the study of  foliations that are obtained by  the lifting process. This kind of foliations 
defines holonomy maps which are strongly related with the values obtained by the integration of the 
forms $\omega_i$ along loops in $\mathbb{C}^M$, as we explain in the rest of the section.
\subsection*{Holonomy}
Let $\mathcal{F}$ be as above. 
Fix a neighborhood $\Delta=\Delta_1\times\Delta_2$ of $0\in\mathbb{C}^{M+N}$,  where $\Delta_1$ and $\Delta_2$ are --- respectively --- Euclidean balls centered at the origins of $0\in\mathbb{C}^M$ and
 $0\in\mathbb{C}^{N}$, and assume that $\mathcal{F}$ and $\mathcal{D}$ are defined on $\Delta$ --- in this case 
 $\mathcal{F}$ and $\mathcal{D}$ are actually defined on $\Delta_1\times\mathbb{C}^N$, as we have pointed out in the previous section.
  Let $p\in\Delta_1\backslash \sing(\mathcal{F})$ and consider the fiber 
 $\Delta_p:=\{p\}\times\Delta_2$.  Take any
 $\zeta\in \Delta_p$ and denote by $\mathcal{L}$ the leaf of 
 $\mathcal{F}^{\mathcal{D}}|_\Delta$
 passing through $\zeta$. We are interested in the set of "returns" of $\mathcal{L}$ to the fiber $\Delta_p$, that is, we want to study the set $\mathcal{L}\cap\Delta_p$.  A way to obtaining points in this set is by lifting some loops in $\Delta_1$ based at $p$, as we describe below. 
 Let $\gamma\colon [0,1]\to\Delta_1$ be a piecewise path with $\gamma(0)=p$. Then there exists a unique piecewise 
  path $\tilde{\gamma}$ starting at $\zeta\in\Delta_p$, tangent to $\mathcal{D}$ and such that  $\pi_1\circ \tilde{\gamma}=\gamma$. In fact, if  $\zeta=(p,z)$ and 
 $$\gamma(t)=(x_1(t),\dots,x_M(t))$$ for $t\in[0,1]$, we have 
 $$\tilde{\gamma}(t)=(x_1(t),\dots,x_M(t), z_1(t),\dots,z_N(t)),$$ where  the functions $z_1,\dots,z_N$ are the solutions of the system 
 \begin{align}
\nonumber z_1'&=\omega_1(\gamma)\cdot\gamma'\\
&\;\;\vdots \label{system}\\
z_N'&=\omega_N(\gamma)\cdot\gamma'\nonumber
\end{align}
with the initial condition $$(z_1(0),\dots,z_N(0))=z.$$
 Now, suppose that $\gamma$  is contained in the leaf of $\mathcal{F}$ through $p$. Then $$\gamma'(t)=\theta(t)X(\gamma(t))$$ for some 
 piecewise function $\theta$ and, from System \eqref{system} and the definition of $X^\mathcal{D}$, we deduce that $\tilde{\gamma}$ is contained in 
 the leaf of $\mathcal{F}^\mathcal{D}$ through $\zeta$. In principle, the path $\tilde{\gamma}$ is contained in 
 $\Delta_1\times\mathbb{C}^N$, it is not necessarily contained in $\Delta$. If $\gamma$ were a loop based at $p$ and we had $\tilde{\gamma}\subset \Delta$,  then the ending point of $\tilde{\gamma}$ would be a return of $\mathcal{L}$ to the fiber $\Delta_p$.  Lemma \ref{levantamiento} below gives us
 an elementary  criteria to guarantee
  that $\tilde{\gamma}$ is contained in $\Delta$. Recall that  the forms $\omega_i$ can be considered as an $N$-tuple of forms 
  \begin{equation}\label{multiforma}\omega =(\omega_1,\dots,\omega_N).\end{equation}
 Given any $\xi\in\Delta$, we can see $\omega(\xi)$ as a linear map $\mathbb{C}^M\to\mathbb{C}^N$, so we  consider its canonical norm $|\omega(\xi)|$. Moreover, we denote by $\ell(\gamma)$ the Euclidean length of $\gamma$.
 
    \begin{lem}\label{levantamiento} Suppose that there exists $ \mathfrak{K}>0$ such that $|\omega|\le \mathfrak{K}$ on
     $$\Delta=\Delta_1\times\Delta_2.$$ Let $\gamma\subset \Delta_1$ be a piecewise path starting at $p$. Consider a point $\zeta=(p,z)$ in $\Delta$ and let $r>0$ be the radius of $\Delta_2$. Then, if $$|z|+ \mathfrak{K}\ell(\gamma)<r,$$ the lifting of $\gamma$ to $\mathcal{D}$ starting at $\zeta$ is contained in $\Delta$.
  \end{lem}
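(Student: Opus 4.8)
The plan is to write the lift $\tilde\gamma$ in integral form using System \eqref{system} and then control its $z$--component by a single length estimate; the point is that the forms $\omega_i$ depend only on $x$, so the bound $\mathfrak{K}$ on $|\omega|$ is available all along the lifted path, with no feedback from the $z$--coordinates.

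First I would observe that the lift exists on the whole interval $[0,1]$. Write $\zeta=(p,z)$ with $z=(z_1,\dots,z_N)$, and $\gamma(t)=(x_1(t),\dots,x_M(t))$ with $\gamma([0,1])\subset\Delta_1$. Since the right--hand sides of System \eqref{system} involve only $\gamma$ and $\gamma'$ --- and not the unknowns $z_j$ --- the solution is obtained by mere quadrature:
\[
z_j(t)=z_j+\int_0^t \omega_j(\gamma(s))\cdot\gamma'(s)\,ds,\qquad j=1,\dots,N,
\]
which is well defined for every $t\in[0,1]$, because $\gamma$ is piecewise smooth (so $\gamma'$ is defined and piecewise continuous) and $\omega$ is continuous on the compact set $\gamma([0,1])\subset\Delta_1$. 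Thus $\tilde\gamma(t)=(\gamma(t),z_1(t),\dots,z_N(t))$ is the lift of $\gamma$ to $\mathcal{D}$ starting at $\zeta$, and it remains only to check that $\tilde\gamma(t)\in\Delta$ for all $t$.

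Next, collecting the $N$ identities above into $z(t)=z+\int_0^t\omega(\gamma(s))\cdot\gamma'(s)\,ds$, where $\omega$ is regarded as the tuple \eqref{multiforma} and $\omega(\gamma(s))$ acts on the tangent vector $\gamma'(s)\in\mathbb{C}^M$, I would estimate, for every $t\in[0,1]$,
\[
|z(t)|\le |z|+\int_0^t|\omega(\gamma(s))\cdot\gamma'(s)|\,ds\le |z|+\int_0^t|\omega(\gamma(s))|\,|\gamma'(s)|\,ds.
\]
Since $\gamma(s)\in\Delta_1$ and, by hypothesis, $|\omega|\le\mathfrak{K}$ on $\Delta_1\times\Delta_2$ --- and $\omega$ does not depend on the $z$--variables, so this bound is valid over the whole fiber above $\gamma(s)$ --- the integrand is bounded by $\mathfrak{K}|\gamma'(s)|$, and therefore
\[
|z(t)|\le |z|+\mathfrak{K}\int_0^1|\gamma'(s)|\,ds=|z|+\mathfrak{K}\,\ell(\gamma)<r.
\]
Hence $z(t)\in\Delta_2$ for all $t\in[0,1]$; combined with $\gamma(t)\in\Delta_1$ this gives $\tilde\gamma(t)\in\Delta_1\times\Delta_2=\Delta$, which is the assertion.

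I do not expect any genuine obstacle here; the lemma is an elementary a priori bound. The only point worth stressing is that, precisely because the $\omega_i$ are independent of $z$, the a priori estimate above is already the final estimate and no continuity/bootstrap argument on the first exit time of $\tilde\gamma$ from $\Delta$ is required. (If one preferred such an argument, or if $\omega$ depended on $z$, one would let $t_0\in(0,1]$ be the first time $\tilde\gamma$ meets $\partial\Delta$ and run the same computation on $[0,t_0]$ to contradict $|z(t_0)|=r$.)
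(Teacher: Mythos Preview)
Your proof is correct and follows essentially the same route as the paper's: write the $z$--component of the lift by quadrature and bound it by $|z|+\mathfrak{K}\,\ell(\gamma)<r$. Your additional remarks on why the lift exists globally and why no exit--time argument is needed are accurate but not required, since the paper's proof is likewise a direct one--line estimate.
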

  \begin{proof} The lifting $\tilde{\gamma}$  of $\gamma$ to $\mathcal{D}$ starting at $\zeta$ has the form 
  $$\tilde{\gamma}(t)=(\gamma(t),\mathfrak{z}(t)),$$ where $\mathfrak{z}(t)=(z_1(t),\dots,z_M(t))$ satisfies System \eqref{system} with $\mathfrak{z}(0)=z$. Then $$\mathfrak{z}'=\omega(\gamma(t))\cdot \gamma'(t),$$
  so that, for any $s\in [0,1]$  we have  \begin{align*} \mathfrak{z}(s)- z&=\int\limits_{0}^s\omega(\gamma(t))\cdot \gamma'(t) dt.
  \end{align*} Therefore 
  \begin{align*}|\mathfrak{z}(s)| &\le |z|+\left|\int\limits_{0}^s\omega(\gamma(t))\cdot \gamma'(t) dt\right|\\
  &\le |z|+\mathfrak{K}\ell(\gamma)<r,
  \end{align*}  so $\tilde{\gamma}(s)$ belongs to $\Delta$ for all $s\in[0,1]$. 
  \end{proof}

  By Lemma \ref{levantamiento}, we conclude that, in order to obtain returns of the leaf $\mathcal{L}$ to the fiber $\Delta_p$, it suffices to lift suitable small loops $\gamma$  that are contained in the leaf of $\mathcal{F}|_{\Delta_1}$ through $p$.  
 \subsection*{Non-trivial returns and main ideas} 
Let $\gamma$ be a loop based at $p$, contained in the leaf of $\mathcal{F}|_{\Delta_1}$ through $p$ and such that
the lifting of $\gamma$ to $\mathcal{D}$ is contained in $\Delta$ --- so it defines a return of $\mathcal{L}$ to $\Delta_p$. Naturally, we are interested in
foliations $\mathcal{F}$ that are able to generate many returns of $\mathcal{L}$ to $\Delta_p$ that are different from $\zeta$ itself. Thus, 
as a first step, we want to estimate the difference --- associated to the loop $\gamma$ --- between the starting point $\zeta=(p,z)$ an its return
 $\zeta^\gamma:=(p,z^\gamma)$ to $\Delta_p$ associated to
the loop $\gamma$. From 
System \eqref{system} we easily obtain that 
$$z^\gamma-z=\int\limits_{\gamma} \omega.$$
Thus,  our general objective is to study the integral of $\omega$ along loops $\gamma$ that are tangent to some holomorphic foliation $\mathcal{F}$. 
If $d\omega=0$, we clearly obtain $\int_{\gamma}\omega=0$ and we have no returns different from $\zeta$. This allows us to perceive the role played
by the nonzero coefficients of $d\omega$ in its series expansion. On the other hand,
if $\gamma$ were the boundary of a compact Riemann surface --- with boundary --- $S$, by Stokes Theorem we would obtain
$$\int\limits_{\gamma}\omega=\int\limits_{\gamma}\omega|_S=\int\limits_{S}d(\omega|_S)=0,$$ and this will happen even if the surface $S$ is singular. Thus, in order to obtain 
 $\int_{\gamma}\omega\neq 0$ we have to take $\gamma$ and $\mathcal{F}$ such that:
\begin{enumerate}
\item $\gamma$ is not homotopically null in its leaf; otherwise $\gamma$ is the boundary of a complex disc in its leaf.
\item $\gamma$ is not the fundamental loop in a separatrix of $\mathcal{F}$; otherwise $\gamma$ is the boundary of a singular disc.
\end{enumerate}
Given $\omega$ with $d\omega\neq 0$, we illustrate how we can obtain $\int_{\gamma}\omega\neq 0$ by looking for $\mathcal{F}$ 
in the class of linear foliations. For the sake of clarity, we suppose $M=2$.  Then we assume that $\mathcal{F}$ is defined by a linear vector field on
$\mathbb{C}^2$. Since we need many loops tangent to $\mathcal{F}$ that are homotopically non trivial in its corresponding leaves, we take 
$\mathcal{F}$ being generated by the system 
\begin{equation}\label{linearsystem}
\begin{aligned}
 x_1'&=&m_1x_1\\
x_2'&=&-m_2x_2,
\end{aligned}
\end{equation}  
where $m_1$ and $m_2$ are coprime natural numbers. Take $(y_1,y_2)\in\left(\mathbb{C}^*\right)^2$ small and consider the loop 
\begin{align*} \gamma\colon \partial\mathbb{D}&\to \mathbb{C}^2\\
\zeta  &\mapsto \left(y_1\zeta^{m_1},y_2\zeta^{-m_2}\right),
\end{align*}  which is contained in the leaf of $\mathcal{F}$ through $(y_1, y_2)$. We notice that $\gamma$  is the boundary of the parametrized singular disc 
$$D=\Big\{\left(y_1w^{m_1},y_2\bar{w}^{m_2}\right)\colon w\in\overline{\mathbb{D}}\Big\},$$
where $\overline{\mathbb{D}}=\{w\in\mathbb{C}\colon |w|\le 1\}$.
Observe that $D$ is just a $C^{\infty}$ singular disc. 

The 2-form $d\omega$ can be expressed as $$d\omega=\sum\limits c_{k_1k_2}x_1^{k_1}x_2^{k_2}dx_1dx_2,$$ where
 $c_{k_1k_2}\in\mathbb{C}^N$.  
Then we have 
\begin{align}\int\limits_{\gamma}\omega &=\int\limits_{D}d\omega= \sum\limits c_{k_1k_2}\int\limits_{\mathbb{D}}\left(y_1w^{m_1}\right)^{k_1}\left(y_2\bar{w}^{m_2}\right)^{k_2}
d\left(y_1w^{m_1}\right)d\left(y_2\bar{w}^{m_2}\right)\nonumber \\
&=\sum m_1m_2c_{k_1k_2}y_1^{k_1+1}y_2^{k_2+1}\int\limits_{\mathbb{D}}w^{m_1(k_1+1)-1}\bar{w}^{m_2(k_2+1)-1}dwd\bar{w}.\label{suma}
\end{align}
We will use the folowing lemma, whose proof is a straightforward computation.
\begin{lem}\label{dzdz}Let $p,q\in\mathbb{Z}_{\ge 0}$. Then
$$ \int\limits_{\mathbb{D}}w^p\bar{w}^qdwd\bar{w}= \begin{cases} -\frac{2\pi \sqrt{-1}}{p+1} &\mbox{if } p=q,\\ 
0 & \mbox{otherwise. } \end{cases} $$
\end{lem}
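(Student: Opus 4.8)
The plan is to reduce the integral to an elementary one-variable computation by passing to polar coordinates. Writing $w = re^{\sqrt{-1}\,\theta}$ with $r\in[0,1]$ and $\theta\in[0,2\pi]$, so that $\bar w = re^{-\sqrt{-1}\,\theta}$ and $w^p\bar w^q = r^{p+q}e^{\sqrt{-1}(p-q)\theta}$, I would first record the identity
\[
dw\wedge d\bar w = (dx+\sqrt{-1}\,dy)\wedge(dx-\sqrt{-1}\,dy) = -2\sqrt{-1}\,dx\wedge dy = -2\sqrt{-1}\,r\,dr\wedge d\theta,
\]
using the standard polar expression for the area form. Substituting this into the integral and separating the variables gives
\[
\int_{\mathbb{D}} w^p\bar w^q\, dw\, d\bar w = -2\sqrt{-1}\left(\int_0^1 r^{p+q+1}\, dr\right)\left(\int_0^{2\pi} e^{\sqrt{-1}(p-q)\theta}\, d\theta\right).
\]

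The key point is the angular factor: since $p-q$ is an integer, $\int_0^{2\pi} e^{\sqrt{-1}(p-q)\theta}\,d\theta$ equals $2\pi$ when $p=q$ and vanishes otherwise. Hence the whole expression vanishes unless $p=q$, and when $p=q$ the radial factor is $\int_0^1 r^{2p+1}\,dr = \tfrac{1}{2p+2}$, so multiplying the three factors yields $-2\sqrt{-1}\cdot\tfrac{1}{2p+2}\cdot 2\pi = -\tfrac{2\pi\sqrt{-1}}{p+1}$, which is the claimed value.

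There is no genuine obstacle here; the only point requiring care is fixing the sign and normalization of the area form $dw\wedge d\bar w$, since that is precisely what produces the factor $-2\sqrt{-1}$ and thus the sign appearing in the statement. As a sanity check one could instead argue via Stokes' Theorem, observing that $d\big(w^p\bar w^{q+1}\, dw\big) = -(q+1)\,w^p\bar w^q\, dw\wedge d\bar w$, so that the integral equals $-\tfrac{1}{q+1}\int_{\partial\mathbb{D}} w^p\bar w^{q+1}\, dw$; parametrizing $\partial\mathbb{D}$ by $w=e^{\sqrt{-1}\,\theta}$ reduces this once more to $\sqrt{-1}\int_0^{2\pi} e^{\sqrt{-1}(p-q)\theta}\, d\theta$, giving the same answer and confirming the sign.
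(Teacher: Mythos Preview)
Your proof is correct and is precisely the kind of straightforward computation the paper has in mind; the paper itself does not spell out the argument, simply noting that the lemma follows from a direct calculation. Your polar-coordinates computation (and the Stokes'-theorem cross-check) fills in these details cleanly, with the sign and normalization of $dw\wedge d\bar w$ handled correctly.
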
 By this lemma we see that in the summation \eqref{suma} only remain the terms corresponding to indexes in
the set 
$$I=\Big\{(k_1,k_2)\colon m_1(k_1+1)=m_2(k_2+1)\Big\}$$ and we obtain

\begin{align} \label{suma2}
\int\limits_{\gamma}\omega 
=\sum\limits_{(k_1,k_2)\in I} \left(\frac{-2\pi m_2\sqrt{-1}}{k_1+1}\right)c_{k_1k_2}y_1^{k_1+1}y_2^{k_2+1}.
\end{align}
From the equation above we see that $\int_{\gamma}\omega$  is an analytic function of $(y_1,y_2)$, hence 
$\int_{\gamma}\omega$ take a lot of values whenever the series in \eqref{suma2} is not null. To guarantee that this series
is not null we have to choose $(m_1,m_2)$ such that, for some $(k_1,k_2)\in I$, we have $c_{k_1k_2}\neq 0$. This choice is clearly possible if there exists some $c_{k_1k_2}\neq 0$: for this particular $(k_1,k_2)$ we  can choose $(m_1,m_2)$  such that
 $m_1(k_1+1)=m_2(k_2+1)$. In this case we can say that the linear singularity \eqref{linearsystem}
 is adapted to the coefficient $c_{k_1k_2}$.  
The idea for
the construction of the vector field  $Z$ in Theorem \ref{teorema2} is to find a foliation $\mathcal{F}$ that 
``contains'' many linear singularities adapted to each coefficient $c_{k_1k_2}\neq 0$. Actually, in this task we only need to consider a finite set coefficients $c_{k_1k_2}$ which spans the space $W_{\mathcal{D}}$. The term ``contains''
above is used to mean that such linear singularities are generated by $\mathcal{F}$ after  a blow-up of the origin.
The construction of the foliation $\mathcal{F}$  will be achieved in next section  but we will finish this one
with a result --- Proposition \ref{retornos} ---  collecting the ideas sketched above in a more general setting.
\subsection*{Returns associated to resonant singularities }

Given $x,y\in \mathbb{C}^M$, denote   $$x \cdot y=x_1y_1+\dots+x_My_M$$  
and,  for  $1\le i\le M$, let $e_i\in\mathbb{C}^M$ be the standard $i^{\textrm{th}} $ canonical vector. Moreover, given
$L\in  \mathbb{Z}^M$ and $1\le i,j\le M$, we denote \begin{equation}\label{defili}L_{ij}=L+e_i+e_j.\end{equation} 
If $y=(y_1,\dots,y_M)\in\left(\mathbb{C}^*\right)^M$ and $m=(m_1,\dots,m_M)\in \mathbb{Z}^M,$ the loop 
\begin{align*} 
\zeta \in \partial\mathbb{D} \mapsto &\left(y_1\zeta^{m_1},\dots,y_M\zeta^{m_M}\right)\in \mathbb{C}^M
\end{align*} will be called the  $m$-loop based at $y$; this loop is contained in a leaf of the holomorphic foliation defined
by the linear system 

\begin{equation}\label{linearsystemG}
\begin{aligned}
x_1'&=&m_1x_1\\
&\hspace{0.2cm}\vdots&\\
x_M'&=&m_Mx_M.
\end{aligned}
\end{equation} 
Given $\alpha\in\{1,\dots,M\}$, we say that the vector $m=(m_1,\dots,m_M)\in \mathbb{Z}^M$ is {$\alpha$-elementary} if 
$m_{\alpha}>0$ and the other coordinates of $m$ are negative. Let $m\in\mathbb{Z}^M$ be {$\alpha$-elementary} and let $\gamma$
be the $m$-loop based at $y\in\left(\mathbb{C}^*\right)^M$. Observe that $\gamma$ can be expressed as
$$\zeta\in\partial \mathbb{D}\mapsto 
 \left(y_1{\bar{\zeta}}^{-m_1},\dots,y_{{\alpha-1}}\bar{\zeta}^{-m_{\alpha-1}}, y_ {{\alpha}}{\zeta}^{m_{\alpha}}, 
y_{{\alpha+1}}\bar{\zeta}^{-m_{\alpha+1}},\dots,
y_M\bar{\zeta}^{-m_M}\right).$$ Then $\gamma$ is the boundary of the parametrized ($C^{\infty}$) disc 
$$w\in\overline{\mathbb{D}}\mapsto \left(y_1\bar{w}^{-m_1},\dots,y_{{\alpha-1}}\bar{w}^{-m_{\alpha-1}}, 
y_ {{\alpha}}{w}^{m_{\alpha}}, 
y_{{\alpha+1}}\bar{w}^{-m_{\alpha+1}},\dots,
y_M\bar{w}^{-m_M}\right),$$
which will be denoted by $D_\gamma$.

\begin{lem}\label{elementary} Let $m\in \mathbb{Z}^M$ be $\alpha$-elementary and let $\gamma$ be the $m$-loop based at $y\in\left(\mathbb{C}^*\right)^M$.
Let  $K\in\left(\mathbb{Z}_{\ge 0}\right)^M$ and $1\le i<j\le M$. Then  the integral $$\int\limits_{D_\gamma} x^{K}dx_{i}dx_{j}$$ is nonzero only if
 $\alpha\in\{i,j\}$ and $m\cdot {K}_{ i j}=0$. Moreover, in this case we have 
\begin{align}\int\limits_{D_\gamma} x^{K}dx_{i}dx_{j}=\begin{cases}\left(\frac{2\pi \sqrt{-1}}{k_{\alpha}+1}\right)
(m_j)y^{{K}_{ i j}},
 &\textrm{if } i=\alpha,\\
\left(\frac{2\pi \sqrt{-1}}{k_{\alpha}+1}\right)(-m_i)y^{{K}_{ i j}}, &\textrm{if } j=\alpha.
\end{cases}
\end{align}
\end{lem}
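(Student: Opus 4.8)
The plan is to reduce everything to Lemma \ref{dzdz} by pulling the form $x^{K}\,dx_i\wedge dx_j$ back under the explicit parametrization of $D_\gamma$. Write $\phi\colon\overline{\mathbb{D}}\to\mathbb{C}^M$ for the map $w\mapsto\bigl(y_1\bar w^{-m_1},\dots,y_\alpha w^{m_\alpha},\dots,y_M\bar w^{-m_M}\bigr)$ displayed just before the statement, so that $\int_{D_\gamma}x^{K}dx_idx_j=\int_{\overline{\mathbb{D}}}\phi^{*}\bigl(x^{K}\,dx_i\wedge dx_j\bigr)$, and compute this last integral by hand.

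First I would record the pulled-back coordinate differentials. For $\ell\neq\alpha$ one gets $\phi^{*}dx_\ell=-m_\ell y_\ell\,\bar w^{-m_\ell-1}\,d\bar w$, a multiple of $d\bar w$ alone, while $\phi^{*}dx_\alpha=m_\alpha y_\alpha\,w^{m_\alpha-1}\,dw$, a multiple of $dw$. Hence, if $\alpha\notin\{i,j\}$, then $\phi^{*}(dx_i\wedge dx_j)$ is proportional to $d\bar w\wedge d\bar w=0$ and the integral vanishes; this is the first condition in the lemma. So from now on one assumes $\alpha\in\{i,j\}$.

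Next, using $\phi^{*}(x^{K})=y^{K}\,w^{m_\alpha k_\alpha}\,\bar w^{-\sum_{\ell\neq\alpha}m_\ell k_\ell}$ together with the two formulas above, I would write $\phi^{*}(x^{K}\,dx_i\wedge dx_j)=c\,w^{p}\bar w^{q}\,dw\wedge d\bar w$ for an explicit constant $c\in\mathbb{C}$ and explicit exponents $p,q\in\mathbb{Z}$. Here one uses that $m$ is $\alpha$-elementary: every exponent that occurs, namely $m_\alpha-1$, $m_\alpha k_\alpha$, and $-m_\ell-1$, $-m_\ell k_\ell$ for $\ell\neq\alpha$, is a non-negative integer, so $p,q\ge 0$ and Lemma \ref{dzdz} applies verbatim. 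A one-line simplification shows that $p$ equals $q$ if and only if $m\cdot K_{ij}=0$, in which case $p=q=m_\alpha(k_\alpha+1)-1$, giving the second condition in the lemma. Substituting $p+1=m_\alpha(k_\alpha+1)$ into Lemma \ref{dzdz} and collecting $c$ — treating $i=\alpha$ and $j=\alpha$ separately, where the only difference is the sign produced by reordering $d\bar w\wedge dw$ into $dw\wedge d\bar w$ — the factor $m_\alpha$ cancels against the denominator, and after rewriting $y^{K}y_iy_j=y^{K_{ij}}$ one obtains exactly the two displayed formulas, with $m_j$ when $i=\alpha$ and $-m_i$ when $j=\alpha$.

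This is a bookkeeping computation with no genuine obstacle; the one point requiring care is orientation. One must fix the convention $dw\wedge d\bar w$ consistently with Lemma \ref{dzdz}, and the dichotomy between the factor $m_j$ and the factor $-m_i$ is precisely the record of whether the $dw$-factor — coming from $dx_\alpha$ — occupies the first or the second slot of $dx_i\wedge dx_j$. It is also worth checking at the start that $\phi$ is a genuine smooth $2$-chain on $\overline{\mathbb{D}}$ with $\partial D_\gamma=\gamma$, but this is immediate from the parametrization since, $m$ being $\alpha$-elementary, all exponents of $w$ and $\bar w$ are non-negative.
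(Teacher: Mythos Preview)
Your proposal is correct and follows essentially the same approach as the paper: both pull back under the explicit parametrization $\phi$, observe that $\phi^*dx_\ell$ is a multiple of $d\bar w$ for $\ell\neq\alpha$ (so the integral vanishes when $\alpha\notin\{i,j\}$), then compute the exponents of $w$ and $\bar w$ explicitly in the remaining cases and invoke Lemma~\ref{dzdz}. The paper carries out the $i=\alpha$ case in full and leaves $j=\alpha$ to symmetry, exactly as you do.
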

\begin{proof}
  If $\alpha\notin\{i,j\}$,
we see that
  $${D_\gamma}^*(dx_idx_j)=d\left(y_ {i}{\bar w}^{-m_{i}}\right)d\left(y_ {j}{\bar w}^{-m_{j}}\right)=0.$$ Then the integration of $x^Kdx_idx_j$ over
   $D_\gamma$ is nonzero 
  only if $i=\alpha$ or $j=\alpha$. Suppose that  $i=\alpha$. Then 
\begin{align*}&\int\limits_{D_\gamma} x^Kdx_{\alpha}dx_j \hspace{1cm}\\ 
&\hspace{0.5cm}=\int\limits_{\mathbb D}\left(y_1\bar{w}^{-m_1} \right)^{k_1}\dots 
\left(y_ {{\alpha}}{w}^{m_{\alpha}}\right)^{k_{\alpha}}\dots \left( y_M\bar{w}^{-m_M}\right)^{k_M}
d\left(y_ {\alpha}{w}^{m_{\alpha}}\right)d\left(y_ {j}{\bar w}^{-m_{j}}\right)\\
&\hspace{1.4cm}=\int\limits_{\mathbb D} y^Kw^{m_{\alpha}k_{\alpha}}  \bar{w}^{-m\cdot K+ m_{\alpha}k_{\alpha}}
\left(m_{\alpha}y_ {\alpha}{w}^{m_{\alpha}-1}dw\right)\left(-m_j y_ {j}{\bar w}^{-m_{j}-1}d\bar{w}\right)\\
&\hspace{1.8cm}=m_{\alpha}(-m_j) y^{K_{\alpha j}}\int\limits_{\mathbb D} w^{m_{\alpha}k_{\alpha}+m_{\alpha}-1} 
 \bar{w}^{-m\cdot K+ m_{\alpha}k_{\alpha}-m_j-1}
dwd\bar w,
\end{align*}
which, by Lemma \ref{dzdz}, is nonzero only if 
$$m\cdot {K}_{\alpha j}=0,$$ and in this case (again by Lemma \ref{dzdz}) we obtain
\begin{align}\nonumber \int\limits_{D_\gamma} x^{K}dx_{\alpha}dx_j
=\left(\frac{2\pi \sqrt{-1}m_j}{k_{\alpha}+1}\right)y^{{K}_{\alpha j}}.\nonumber
\end{align}
If we have $j=\alpha$, a similar computation gives us
\begin{align}\nonumber\int\limits_D x^{K}dx_{i}dx_{\alpha}=\begin{cases}
-\left(\frac{2\pi \sqrt{-1}m_i}{k_{\alpha}+1}\right)y^{{K}_{i\alpha}}, &\textrm{if } m\cdot {K}_{i\alpha }=0\\
0, &\textrm{otherwise}\end{cases}
\end{align}which completes the proof.
\end{proof}
Let  $L\in\left(\mathbb{Z}_{\ge 0}\right)^M$,  $\alpha\in\{1,\dots,M\}$ and,  given $i,j\in\{1,\dots,M\}$, denote $$L_{-ij}=L-e_i-e_j.$$ Then define $\mathcal{M}(L,\alpha)$ as the set composed of the monomial 2-forms
\begin{equation}\label{monomios}\begin{aligned}&x^{L_{-i\alpha}}dx_idx_\alpha \quad\,\textrm{ for } 1\le i<\alpha, \textrm{ and }\\
 &x^{L_{-\alpha j}}dx_\alpha dx_j\quad\textrm{ for } \alpha<j\le M.\end{aligned}
 \end{equation}
 We notice that $\mathcal{M}(L,\alpha)$ has $(M-1)$ monomials, although some of these monomials could have negative exponents. 
Let $\omega$ be the $N$-tuple of forms as we have in \eqref{multiforma} and, as in \eqref{ecudo}, set 
$$d\omega=\sum{c_{K}^{ij}x^Kdx_idx_j}.$$ If $\varsigma$ denotes a monomial $x^Kdx_idx_j$ with 
$K\in\mathbb{Z}^M$, $1\le i<j\le M$, we define $c(\varsigma)$ as the coefficient of $\varsigma$ in $d\omega$.  Naturally, since $\omega$ is holomorphic, if $\varsigma$ has some negative exponent we have 
 $c(\varsigma)=0$.

\begin{prop}\label{retornos}
Let
$m\in\mathbb{Z}^M $  be an $\alpha$-elementary vector for some  $\alpha\in\{1,\dots,M\}$.  Given $y\in\left(\mathbb{C}^*\right)^M$, let 
$\gamma_y$ be the $m$-loop based at $y$. Then the function
 \begin{align*}\rho(m)\colon y\mapsto \int\limits_{\gamma_y}\omega\in\mathbb{C}^N\end{align*}
has a power series expansion $$\rho(m)=\sum\limits a_L(m) y^L,$$ where $a_L(m)\in\mathbb{C}^N$, $L\in\left(\mathbb{Z}_{\ge 0}\right)^M$, such that the following properties hold:
\begin{enumerate}
\item We have \label{item11}$a_L(m)\neq 0$ only if $m\cdot L=0$.
\item \label{item22}Given $L$ with $m\cdot L=0$, let $\varsigma_1,\dots,\varsigma_{M-1}$ be the  monomials 
of $\mathcal{M}(L,\alpha)$ in the order they appear in \eqref{monomios}. Then there exist complex numbers $$\lambda_1(L,m),\dots,\lambda_{M-1}(L,m),$$ not depending on $\omega$, such that
$$a_L(m)=\lambda_1(L,m)c(\varsigma_1)+\dots + \lambda_{M-1}(L,m)c(\varsigma_{M-1}).$$ 
\item \label{item33} Set $L=(l_1,\dots,l_M)$ and assume that $l_\alpha\neq 0$. Suppose that there exists $\mathbb{C}$-linearly independent $\alpha$-elementary vectors 
 $$\mathfrak{m}_1,\dots, \mathfrak{m}_{M-1}\in\mathbb{Z}^M$$  such that $$\mathfrak{m}_1\cdot L=\dots =\mathfrak{m}_{M-1}\cdot L=0.$$ Then the matrix 
$$\hspace{2cm}\Big[\lambda_i(L,\mathfrak{m}_j)\Big]_{1\le i,j \le M-1}$$ is invertible. In particular --- from item \eqref{item22} --- the 
coefficients in $d\omega$ of the monomials in $\mathcal{M}(L,\alpha)$ are linear combinations of the $(M-1)$ coefficients 
$$a_L(\mathfrak{m}_1), \dots,a_L(\mathfrak{m}_{M-1})$$
 of the monomial $y^L$
in the series $$\rho(\mathfrak{m}_1),\dots,\rho(\mathfrak{m}_{M-1}).$$
\item \label{coment} In order to indicate the dependence of $\rho(m)$ on  $\omega$, we will write $$\rho(m)=\rho(\omega,m) \textrm{ and }
a_L(m)=a_L(\omega,m).$$ From item \eqref{item22} we see that $a_L(\omega,m)$ depends only on the coefficients of $d\omega$ corresponding to the monomials in $\mathcal{M}(L,\alpha)$. Since these monomials have  degree $(|L|-2)$, we  conclude the following: if $d\omega_1$ and $d\omega_2$
have series coinciding up to order $k$, then we have $$a_L(\omega_1,m)=a_L(\omega_2,m),\textrm{ whenever }|L|\le k+2.$$

\end{enumerate}
\end{prop}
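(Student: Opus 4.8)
The plan is to compute the function $\rho(m)$ explicitly, by running the singular disc $D_{\gamma_y}$ through Stokes' Theorem and then through Lemma \ref{elementary}; all of \eqref{item11}--\eqref{coment} will then be read off a single closed formula for the coefficients $a_L(m)$. The only step that is not pure bookkeeping is a short linear-algebra observation needed for \eqref{item33}, which I flag at the end.

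Since $m$ is $\alpha$-elementary, the $m$-loop $\gamma_y$ bounds the $C^{\infty}$ disc $D_{\gamma_y}$ described before Lemma \ref{elementary}, and the image of $D_{\gamma_y}$ lies in the closed polydisc of polyradius $(|y_1|,\dots,|y_M|)$; hence for $y$ near the origin it stays inside a fixed polydisc on which the series \eqref{ecudo} of $d\omega$ converges uniformly. By Stokes' Theorem and term-by-term integration (legitimate by that uniform convergence),
\[
\rho(m)(y)=\int_{\gamma_y}\omega=\int_{D_{\gamma_y}}d\omega=\sum_{K,\;i<j}c_K^{ij}\int_{D_{\gamma_y}}x^{K}dx_idx_j .
\]
Applying Lemma \ref{elementary} to each term, the integral $\int_{D_{\gamma_y}}x^{K}dx_idx_j$ vanishes unless $\alpha\in\{i,j\}$ and $m\cdot K_{ij}=0$, and otherwise equals $\tfrac{2\pi\sqrt{-1}}{k_\alpha+1}$ times $m_j\,y^{K_{ij}}$ if $i=\alpha$, or $-m_i\,y^{K_{ij}}$ if $j=\alpha$. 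Writing $L=K_{ij}=K+e_i+e_j$ — so that $K=L_{-ij}$, $c_K^{ij}=c\big(x^{L_{-ij}}dx_idx_j\big)$, $m\cdot L=0$, and $k_\alpha+1=l_\alpha$ because $\alpha\in\{i,j\}$ — and collecting the coefficient of $y^L$, we find that $a_L(m)=0$ whenever $m\cdot L\neq0$ (which is \eqref{item11}), while for $L\neq0$ with $m\cdot L=0$ (so $l_\alpha\ge1$, else $L=0$)
\[
a_L(m)=\frac{2\pi\sqrt{-1}}{l_\alpha}\Big(-\sum_{1\le i<\alpha}m_i\,c\big(x^{L_{-i\alpha}}dx_idx_\alpha\big)+\sum_{\alpha<j\le M}m_j\,c\big(x^{L_{-\alpha j}}dx_\alpha dx_j\big)\Big),
\]
with $a_0(m)=0$ trivially. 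Reading this against the ordering of $\mathcal{M}(L,\alpha)$ in \eqref{monomios} gives \eqref{item22}, with $\lambda_i(L,m)=-\tfrac{2\pi\sqrt{-1}}{l_\alpha}m_i$ for $1\le i<\alpha$ and $\lambda_{j-1}(L,m)=\tfrac{2\pi\sqrt{-1}}{l_\alpha}m_j$ for $\alpha<j\le M$; these clearly do not involve $\omega$. Assertion \eqref{coment} is then immediate, since the monomials of $\mathcal{M}(L,\alpha)$ have degree $|L|-2$, so $a_L(\omega,m)$ is determined by the coefficients of $d\omega$ of degree $|L|-2$.

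It remains to establish \eqref{item33}. Let $\phi\colon\mathbb{C}^M\to\mathbb{C}^{M-1}$ be the linear projection that deletes the $\alpha$-th coordinate. From the formulas for the $\lambda_i$, the matrix $\big[\lambda_i(L,\mathfrak{m}_j)\big]_{1\le i,j\le M-1}$ equals $\tfrac{2\pi\sqrt{-1}}{l_\alpha}\,\Delta\,C$, where $\Delta$ is the diagonal matrix with $-1$ in its first $\alpha-1$ entries and $+1$ thereafter, and $C$ is the matrix whose $j$-th column is $\phi(\mathfrak{m}_j)$; so it suffices to show $\det C\neq0$. Since $l_\alpha\neq0$ we have $L\neq0$, hence the hyperplane $L^{\perp}=\{v\in\mathbb{C}^M:v\cdot L=0\}$ has dimension $M-1$; moreover $\ker\big(\phi|_{L^{\perp}}\big)=L^{\perp}\cap\mathbb{C}e_\alpha=\{0\}$ because $e_\alpha\cdot L=l_\alpha\neq0$, so $\phi$ restricts to an isomorphism of $L^{\perp}$ onto $\mathbb{C}^{M-1}$. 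As the $\alpha$-elementary vectors $\mathfrak{m}_1,\dots,\mathfrak{m}_{M-1}$ are linearly independent and lie in $L^{\perp}$, they are a basis of $L^{\perp}$, so their images $\phi(\mathfrak{m}_j)$ are a basis of $\mathbb{C}^{M-1}$ and $\det C\neq0$. This proves that $\big[\lambda_i(L,\mathfrak{m}_j)\big]$ is invertible; inverting the linear system of \eqref{item22} then writes each $c(\varsigma_i)$ as a linear combination of $a_L(\mathfrak{m}_1),\dots,a_L(\mathfrak{m}_{M-1})$, which is the last claim. I expect this final linear-algebra point — that deletion of the $\alpha$-th coordinate is injective on $L^{\perp}$ precisely when $l_\alpha\neq0$ — to be the only genuinely non-routine part of the argument; everything before it is a direct bookkeeping consequence of Lemma \ref{elementary}.
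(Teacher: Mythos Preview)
Your proof is correct and follows essentially the same route as the paper: apply Stokes' Theorem, push the series through Lemma \ref{elementary} term by term to obtain the explicit formula $(\lambda_1,\dots,\lambda_{M-1})=\tfrac{2\pi\sqrt{-1}}{l_\alpha}(-m_1,\dots,-m_{\alpha-1},m_{\alpha+1},\dots,m_M)$, and then observe that the projection deleting the $\alpha$-th coordinate is injective on $L^{\perp}$ exactly because $l_\alpha\neq0$. The only cosmetic difference is that you factor the matrix as $\tfrac{2\pi\sqrt{-1}}{l_\alpha}\Delta C$ with columns $\phi(\mathfrak m_j)$, whereas the paper writes the transpose with rows $\varpi(\mathfrak m_j)$; the linear-algebra content is identical.
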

\begin{rem}\label{remark1} Let $L$ be as in item \eqref{item33}. We will show that, if $L$ has some nonzero coordinate besides the coordinate $l_\alpha$, then  those vectors $\mathfrak{m}_1,\dots, \mathfrak{m}_{M-1}$
always exist --- this will be crucial in the proof of Theorem \ref{teorema2}. Without loss of generality we can assume that $\alpha=M$.  Take vectors $$\mathfrak{m}'_1,\dots,\mathfrak{m}_{M-1}'
\subset \mathbb{N}^{M-1}\times\{0\}\subset \mathbb{C}^M$$ which are $\mathbb{C}$- linearly independent. 
Multiplying these vectors by a suitable natural number if necessary, we can assume that the numbers
$$m_1=\frac{\mathfrak{m}_1'\cdot L}{l_M},\dots , m_{M-1}=\frac{\mathfrak{m}_{M-1}'\cdot L}{l_M}$$
are non-negative integral numbers. Actually, since $L$ has a positive coordinate $l_\beta$ with $\beta\in\{1,\dots, M-1\}$,  the numbers $m_1,\dots,m_{M-1}$ are naturals. Then 
\begin{equation*}
\mathfrak{m}_1 =-\mathfrak{m}'_1+m_1e_M \hspace{0.3cm}, \dots , \hspace{0.3cm}\mathfrak{m}_{M-1}=-\mathfrak{m}'_{M-1}+m_{M-1}e_M
\end{equation*}
are the desired vectors. 
\end{rem}
\begin{proof} By Stokes Theorem,
$$ \int\limits_{\gamma_y}\omega=\int\limits_{D_{\gamma_y}}d\omega=\sum c_K^{ij}\int\limits_{D_{\gamma_y}}x^Kdx_idx_j.$$
By Lemma \ref{elementary}, each integral \begin{equation} \int\limits_{D_{\gamma_y}}x^Kdx_idx_j
\label{integral1}\end{equation} is a monomial of $y=(y_1,\dots,y_{M})$. Moreover, given $L\in\left(\mathbb{Z}_{\ge 0}\right)^M$, again  by Lemma \ref{elementary} we can see that the integral \eqref{integral1} is a monomial in $\mathbb{C}^*y^L$ only if the monomial $x^Kdx_idx_j$ belongs to the set   
\begin{equation}\label{mon22}\mathcal{M}=\{x^Kdx_idx_j\colon\alpha\in\{i,j\},\;m\cdot L=0, \;K_{ij}=L\}.\end{equation}
Then we have 
\begin{equation}a_L(m)y^L=\sum \int\limits_{D_{\gamma_y}}c_K^{ij}x^Kdx_idx_j,
\label{integral2}\end{equation} where the summation extends over the monomials $x^Kdx_idx_j$ in the
 set $\mathcal{M}$. If $m\cdot L\neq 0$, the set $\mathcal{M}$ is empty, so item \eqref{item11} follows. Suppose that 
 $m\cdot L=0$. Then we have $\mathcal{M}=\mathcal{M}(L,\alpha)$, so the summation \eqref{integral2} extends over 
 the monomials $\varsigma_1,\dots,\varsigma_{M-1}$. Thus  --- by Lemma \ref{elementary} --- for each term  of the summation 
 \eqref{integral2} we have
  \begin{align*}\int\limits_{D_{\gamma_y}}c_K^{ij}x^Kdx_idx_j=&
  \begin{cases} \left(\frac{2\pi \sqrt{-1}}{l_\alpha}\right)(-m_i)c_K^{ij}y^{L},
 &\textrm{if } j=\alpha \\
\left(\frac{2\pi \sqrt{-1}}{l_\alpha}\right)(m_j)c_K^{ij}y^{L}, &\textrm{if } i=\alpha .  
\end{cases}
 \end{align*}
 Then, if we set $\eta=\left(\frac{2\pi \sqrt{-1}}{l_\alpha}\right)$ and
 \begin{align}\label{horizonte}\big(\lambda_1(L,m),\dots,\lambda_{M-1}(L,m)\big) :=
 \eta (-m_1,\dots,-m_{\alpha-1},m_{\alpha+1},\dots,m_M),
 \end{align} we obtain
\begin{equation} a_L(m)=\lambda_1(L,m)c(\varsigma_1)+\dots + 
\lambda_{M-1}(L,m)c(\varsigma_{M-1}),\label{horizontal}\end{equation} so Item \eqref{item22}
is proved.  For each $i=1,\dots,M-1$, put $$\mathfrak{m}_i=(\mathfrak{m}^i_1,\dots,\mathfrak{m}^i_M).$$  
Then, from \eqref{horizonte}  we have

 \begin{align*}\nonumber\Big[\lambda_i(L,\mathfrak{m}_j)\Big]^T=\eta\begin{bmatrix}-\mathfrak{m}^1_1 & \dots &-\mathfrak{m}^1_{\alpha-1} &
 \mathfrak{m}^1_{\alpha+1} & \dots &
  \mathfrak{m}_M^1\\
  -\mathfrak{m}^2_1 & \dots &-\mathfrak{m}^2_{\alpha-1} &
 \mathfrak{m}^2_{\alpha+1} & \dots &
  \mathfrak{m}_M^2\\
 \vdots &  & \vdots &\vdots & & \vdots
\\
  -\mathfrak{m}^{M-1}_1 & \dots &-\mathfrak{m}^{M-1}_{\alpha-1} &
 \mathfrak{m}^{M-1}_{\alpha+1} & \dots &
  \mathfrak{m}_M^{M-1}
 \end{bmatrix}.
\end{align*}Thus, if we consider the projection  $$\varpi(x_1,\dots,x_M)=(x_1,\dots,x_{\alpha-1},x_{\alpha+1},\dots,x_M),$$ we have
\begin{equation}\label{matrizfinal}
\det\Big[\lambda_i(L,\mathfrak{m}_j)\Big]= \pm \eta^{M-1}\det \begin{bmatrix} \varpi(\mathfrak{m}_1)\\
 \varpi(\mathfrak{m}_2)\\
 \vdots \\
 \varpi(\mathfrak{m}_{M-1})
 \end{bmatrix}.
 \end{equation}
 Recall that the vectors $\mathfrak{m}_1,\dots,\mathfrak{m}_{M-1}$ belong to the hyperplane  
 $$E=\Big\{(x_1,\dots,x_M)\in\mathbb{C}^M\colon l_1x_1+\dots l_Mx_M=0\Big\}. $$ 
 Since those vectors are linearly independent,  in order to prove that the determinant in \eqref{matrizfinal} is nonzero 
 it is enough to show that the projection $$\varpi\colon \mathbb{C}^M\to\mathbb{C}^{M-1}$$ is injective on $E$, which follows directly
 from the inequality  $l_\alpha\neq 0$.  Item \eqref{item33} is proved.
\end{proof}

\section{Holomorphic maps and linear dependence} \label{seccionlemas}
In this section we show a number of lemmas which will be used in the proof of Theorem \ref{teorema2}. These lemmas are mainly based on the vanishing properties of holomophic functions and polynomials.

 \begin{lem}\label{polinomionulo} Let $d\in\mathbb{Z}_{\ge 0}$ and suppose that $T\subset \mathbb{C}$ has at least $d+1$ elements.
  Let $$f\colon\mathbb{C}\to\mathbb{C}^N$$ be a polynomial map of degree $\le d$ such that $f(T)$ is contained in some complex linear
   subspace $W\subset \mathbb{C}^N$. Then  the image of $f$ is contained in $W$. 
  \end{lem}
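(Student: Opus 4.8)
The plan is to reduce the vector-valued statement to the classical one-variable fact about polynomials. First I would write $f=(f_1,\dots,f_N)$, where each $f_i\colon\mathbb{C}\to\mathbb{C}$ is a polynomial of degree $\le d$. The hypothesis that $f(T)\subset W$ means that for every linear functional $\varphi\in(\mathbb{C}^N)^*$ that vanishes on $W$, the scalar polynomial $\varphi\circ f$ vanishes at every point of $T$. Since $\varphi\circ f$ is a polynomial of degree $\le d$ in one variable and $T$ has at least $d+1$ elements, the classical identity principle for one-variable polynomials forces $\varphi\circ f\equiv 0$ on all of $\mathbb{C}$.

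Next I would pass from ``$\varphi\circ f\equiv 0$ for all $\varphi$ annihilating $W$'' to ``$f(\mathbb{C})\subset W$''. This is the standard duality fact: a vector $v\in\mathbb{C}^N$ lies in the subspace $W$ if and only if $\varphi(v)=0$ for every $\varphi$ in the annihilator $W^{\perp}\subset(\mathbb{C}^N)^*$ (equivalently, $W=(W^{\perp})^{\perp}$, which holds because $W$ is a linear subspace of a finite-dimensional space). Applying this pointwise: for each $\zeta\in\mathbb{C}$ and each $\varphi\in W^{\perp}$ we have shown $\varphi(f(\zeta))=0$, hence $f(\zeta)\in W$. Therefore the image of $f$ is contained in $W$, as claimed.

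I do not expect any genuine obstacle here; the only point to handle carefully is the degenerate cases — if $W=\mathbb{C}^N$ the statement is trivial, and if $N=0$ or $W=\{0\}$ the argument still goes through (in the latter case every $\varphi$ is in $W^{\perp}$ and one concludes $f\equiv 0$). One could alternatively argue more concretely by choosing a basis adapted to a complement of $W$ and applying the one-variable polynomial vanishing lemma to each of the finitely many coordinate functions in that complement; this avoids invoking duality explicitly but is the same argument in coordinates. I would present the dual-functional version since it is cleanest and makes transparent where the hypothesis ``$|T|\ge d+1$'' is used, namely exactly to kill each scalar polynomial $\varphi\circ f$ of degree $\le d$.
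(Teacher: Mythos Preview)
Your proof is correct and is essentially the same as the paper's: the paper chooses a linear map $A\colon\mathbb{C}^N\to\mathbb{C}^n$ with $\ker A=W$ and observes that each coordinate of $A\circ f$ is a one-variable polynomial of degree $\le d$ vanishing on $T$, hence identically zero. Your use of all annihilating functionals $\varphi\in W^{\perp}$ is just the same argument phrased via duality rather than via a single surjection onto a complement.
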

  \begin{proof} Let $A\colon\mathbb{C}^N\to\mathbb{C}^n$ be a linear map such that $W=\ker A$. Then 
  $A\circ f\colon\mathbb{C}\to\mathbb{C}^n$ is a polynomial map which vanishes on $T$ and with coordinates of degree $\le d$. Whence each coordinate of $A\circ f$ is null. 
  \end{proof}

  \begin{lem}\label{coeficientes} Consider a germ of holomorphic function  $\rho\colon(\mathbb{C}^M,0)\to(\mathbb{C}^N,0)$ given by a series $$\rho(x)=\sum\theta_{K}x^K,$$ where 
   $K=(k_1,\dots,k_M)$, $x=(x_1,\dots,x_M)$, $\theta_{K}\in \mathbb{C}^N.$  Suppose that the image of $\rho$ is contained in some complex linear 
  subspace $W\subset\mathbb{C}^N$. Then each coefficient $\theta_{K}$ belongs to $W$.
  \end{lem}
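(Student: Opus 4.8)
The claim is that if a holomorphic map $\rho=\sum_K\theta_Kx^K\colon(\mathbb{C}^M,0)\to(\mathbb{C}^N,0)$ has image contained in a linear subspace $W\subset\mathbb{C}^N$, then every Taylor coefficient $\theta_K$ lies in $W$. The plan is to reduce this to a statement about scalar-valued functions via a linear functional annihilating $W$. Concretely, pick a linear map $A\colon\mathbb{C}^N\to\mathbb{C}^n$ with $W=\ker A$, exactly as in the proof of Lemma~\ref{polinomionulo}. Then $A\circ\rho\colon(\mathbb{C}^M,0)\to\mathbb{C}^n$ is a germ of holomorphic map that vanishes identically on the domain of $\rho$, since $\rho$ takes values in $W=\ker A$. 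The Taylor series of $A\circ\rho$ is obtained by applying $A$ term by term: $A\circ\rho(x)=\sum_K A(\theta_K)x^K$, because $A$ is continuous and linear, so it commutes with the (locally uniformly convergent) power series. An identically zero holomorphic germ has all Taylor coefficients zero, whence $A(\theta_K)=0$, i.e.\ $\theta_K\in\ker A=W$, for every $K$.

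The one point that deserves a line of justification is the uniqueness of Taylor coefficients: a holomorphic germ at $0$ that vanishes on a neighborhood of $0$ has vanishing power series. This is standard — iterated partial derivatives at $0$ are all zero, and $\theta_K=\frac{1}{K!}\partial^K\rho(0)$ — so I would simply invoke it. Everything else is immediate once the reduction to $A\circ\rho$ is in place; there is no genuine obstacle here. This lemma is really the ``analytic'' companion of the ``polynomial'' Lemma~\ref{polinomionulo}, with the finite test set $T$ replaced by a neighborhood and the degree bound dropped, and the proof is correspondingly a direct translation.

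I would write it as follows.

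\begin{proof}
Let $A\colon\mathbb{C}^N\to\mathbb{C}^n$ be a linear map with $W=\ker A$. Since the image of $\rho$ is contained in $W$, the composition $A\circ\rho$ vanishes identically on a neighborhood of $0\in\mathbb{C}^M$. As $A$ is linear and continuous, it may be applied term by term to the locally uniformly convergent series of $\rho$, giving
$$A\circ\rho(x)=\sum A(\theta_K)x^K.$$
But a holomorphic germ that vanishes on a neighborhood of the origin has identically zero Taylor series, so $A(\theta_K)=0$ for every $K$; that is, $\theta_K\in\ker A=W$ for every $K$.
\end{proof}
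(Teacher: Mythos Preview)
Your proof is correct and follows essentially the same approach as the paper: choose a linear map $A$ with $\ker A=W$, observe that $A\circ\rho\equiv 0$, pass $A$ through the series term by term, and conclude $A(\theta_K)=0$ from the uniqueness of Taylor coefficients. The paper's version is slightly terser but the argument is identical.
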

  \begin{proof}  Let $A\colon\mathbb{C}^N\to\mathbb{C}^n$ be a linear map such that $W=\ker A$. It follows from the hypothesis that $A\circ \rho\colon(\mathbb{C}^M,0)\to\mathbb{C}^n$ is null. On the other hand, given $x$ in the domain of convergence of the series $\rho$,
   it is easy to prove that 
  $$A\left(\sum\theta_{K}x^K\right)=\sum A(\theta_K) x^K.$$ Then the function  $\sum A(\theta_K) x^K$ vanishes, hence each $A(\theta_K)$ is zero and the lemma follows. 
  \end{proof}
  
  Given $\epsilon>0$,  an $\epsilon$-basis of $\mathbb{R}^k$  will be a basis whose elements have modulus smaller than $\epsilon$. 
   \begin{lem}\label{base real} For each $j=1,\dots,n$, let $U_j\subset\mathbb{C}^M$ be an open connected set accumulate on the origin and let
    $f_j\colon U_j\to\mathbb{C}^N$ be a  holomorphic function such that $f_j(z)\rightarrow 0$ as $z\rightarrow 0$. Suppose that the set 
   $$\Gamma=\{f_j(z)\colon z\in U_j, j=1,\dots,n\}$$ spans $\mathbb{C}^N$ as a complex vector space. Then, for any $\epsilon>0$, the set 
   $\Gamma$ contains an $\epsilon$-basis of  $\mathbb{C}^N$  as a real vector space.
 \end{lem}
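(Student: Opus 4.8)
The plan is to use the complex span hypothesis to extract finitely many vectors from $\Gamma$ that already span $\mathbb{C}^N$ over $\mathbb{C}$, and then exploit the fact that each $f_j$ is holomorphic and accumulates at $0$ to "shrink" and "rotate" these vectors so as to produce a real basis whose elements are as small as we wish. First I would pick $w_1,\dots,w_N\in\Gamma$ spanning $\mathbb{C}^N$ over $\mathbb{C}$; say $w_s=f_{j(s)}(z_s)$. Over $\mathbb{R}$, the list $w_1,iw_1,\dots,w_N,iw_N$ spans $\mathbb{C}^N$, so it suffices to recover, from $\Gamma$, real-small vectors arbitrarily close in direction to each $w_s$ and to each $iw_s$. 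Since $\Gamma$ is scaling-rich only through the holomorphy of the $f_j$, the key observation is: for a fixed $s$, as $z$ ranges over $U_{j(s)}$ near $0$, the values $f_{j(s)}(z)$ trace out a holomorphic curve through $0$ whose image, after dividing by a suitable power of a parameter, has a nonzero "leading coefficient" proportional to a vector we can control.

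The main technical step I would carry out is the following. Fix $s$ and write $f=f_{j(s)}$, $U=U_{j(s)}$. Because $U$ accumulates at $0$ and $f$ is continuous with $f(z)\to 0$, and because $w_s=f(z_s)\ne 0$ lies in the image, $f$ is not identically zero near $0$; restricting $f$ to a small holomorphic disc $t\mapsto \phi(t)\in U$ with $\phi(0)=0$ chosen so that $f\circ\phi\not\equiv 0$, we get $f(\phi(t))=t^{r}(v+O(t))$ for some $r\ge 1$ and some $v\in\mathbb{C}^N\setminus\{0\}$. Then for small complex $t$ the vector $f(\phi(t))$ is a nonzero complex multiple of $v$ up to a small error, and as $\arg t$ varies over $[0,2\pi)$ while $|t|$ stays tiny, the argument of $t^r$ sweeps the whole circle; hence the vectors $f(\phi(t))\in\Gamma$ include, for each prescribed complex phase $\lambda$ with $|\lambda|=1$, a vector within any preassigned error of $\epsilon' \lambda v$ for $\epsilon'$ as small as we like. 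So from each such holomorphic curve we extract, inside $\Gamma$, arbitrarily small vectors pointing in the directions $\pm v$ and $\pm i v$.

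The remaining issue is that the leading vector $v$ attached to a given index $s$ need not be $w_s$ itself — it is merely some nonzero vector in the image of $f_{j(s)}$. I would handle this by noting that the span argument only requires: the collection of all leading directions $v$, obtained by letting $s$ range and by choosing, for each $j$, enough holomorphic discs $\phi$, still spans $\mathbb{C}^N$ over $\mathbb{C}$. This holds because if all such leading vectors lay in a proper complex subspace $W\subsetneq\mathbb{C}^N$, then applying Lemma \ref{coeficientes} (composed with the local parametrizations, or directly to each $f_j$, which is holomorphic at points of $U_j$ arbitrarily near $0$) would force the entire image of every $f_j$ into $W$, contradicting that $\Gamma$ spans $\mathbb{C}^N$; here one uses that $f_j(z)\to0$ so the relevant Taylor expansions are genuinely at the origin. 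Once the leading directions span $\mathbb{C}^N$ over $\mathbb{C}$, I select $N$ of them, $v_1,\dots,v_N$, forming a complex basis; then $v_1,iv_1,\dots,v_N,iv_N$ is a real basis, and by the curve construction above, $\Gamma$ contains vectors $u_1,\dots,u_{2N}$ with $u_{2s-1}$ within $\delta$ of $\epsilon'' v_s$ and $u_{2s}$ within $\delta$ of $\epsilon'' i v_s$; for $\epsilon''<\epsilon$ and $\delta$ small these all have modulus $<\epsilon$ and, being a small perturbation of a basis, remain linearly independent over $\mathbb{R}$, hence form the desired $\epsilon$-basis.

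The step I expect to be the main obstacle is the extraction of a \emph{real} basis rather than just a complex one: producing, inside the scaling-restricted set $\Gamma$, vectors in the "imaginary" directions $iv$. This is exactly where holomorphy is essential — the phase of the parameter $t$ in $f(\phi(t))=t^r(v+O(t))$ supplies the missing $i$ — and the care needed is in making the error terms $O(t)$ uniformly negligible while the phase sweeps the circle, which is a routine but slightly delicate estimate. The spanning argument via Lemma \ref{coeficientes} is the other place that needs attention, to be sure one may legitimately pass from "$\Gamma$ spans" to "leading directions span."
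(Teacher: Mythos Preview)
Your instinct that holomorphy is what supplies the ``imaginary'' directions is correct, but the mechanism you propose has a genuine gap. You assume the existence of a holomorphic disc $\phi\colon\mathbb{D}\to U_{j}\cup\{0\}$ with $\phi(0)=0$ and $\phi(\mathbb{D}\setminus\{0\})\subset U_{j}$, and then rotate the phase of the parameter $t$ to manufacture both the $v$- and the $iv$-direction. But the hypotheses only say that $U_j$ is open, connected, and accumulates at the origin; no such disc need exist. For a concrete obstruction in the case $M=1$, take $U=\{z\in\mathbb{C}:0<|z|<1,\ |\arg z|<|z|\}$: any nonconstant holomorphic $\phi\colon\mathbb{D}\to\mathbb{C}$ with $\phi(0)=0$ maps every small punctured disc about $0$ onto a full punctured neighbourhood of $0$ in $\mathbb{C}$, which cannot lie in $U$. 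Without the disc the phase-rotation step collapses. Your fallback through Lemma~\ref{coeficientes} has the same defect: that lemma concerns a germ at the origin and uses its Taylor coefficients there, whereas $f_j$ has no power-series expansion at $0$ because $0\notin U_j$.

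The paper's argument avoids any parametrization through the origin and is much shorter. One fixes, for each $j$, a nonempty open $U_j^*\subset U_j$ on which $|f_j|<\epsilon$, and shows that $\Gamma^*=\bigcup_j f_j(U_j^*)$ already spans $\mathbb{C}^N$ over $\mathbb{R}$. If not, $\Gamma^*$ lies in a real hyperplane, which can always be written as $\ker(\re A)$ for a nonzero $\mathbb{C}$-linear functional $A\colon\mathbb{C}^N\to\mathbb{C}$. Then each $A\circ f_j$ is holomorphic on the connected set $U_j$ with vanishing real part on the open subset $U_j^*$; hence $A\circ f_j$ is constant there, so constant on all of $U_j$, and the constant is $0$ since $f_j(z)\to 0$ as $z\to 0$. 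Thus $\Gamma\subset\ker A$, contradicting the complex spanning hypothesis. The principle you were trying to engineer by rotating phases --- that the image of a holomorphic map cannot sit inside a real hyperplane without already sitting inside a complex one --- is exactly this two-line argument, applied directly on the $U_j$.
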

 \begin{proof} Let $ \epsilon>0$. For each $j=1,\dots,n$, let $U^*_j\subset U_j$  be a nonempty open set such that
  $|f_j(x)|<\epsilon$ for all $x\in U^*_j$.  We only need to prove that 
 $$\Gamma^*=\{f_j(z)\colon z\in U_j^*, j=1,\dots,n\}$$ spans $\mathbb{C}^N$ as a real vector space. Suppose that this is not true. 
 Then $\Gamma^*$ is contained 
 in some real hyperplane of  $\mathbb{C}^N$. Thus we can find a  nonzero complex linear map $$A\colon\mathbb{C}^N\to\mathbb{C}$$ such that $\Gamma^*\subset \ker (\re (A))$, which means that  $\re(A\circ f_j(x))=0$, $x\in U_j^*$; $j=1,\dots n$. 
 Then, since $A\circ f_j(x)\to 0$ as $x\to 0$, we have
    $$(A\circ f_j)|_{U_j^*}\equiv0,\quad j=1,\dots,n$$ and therefore, since the $U_j$ are connected, $A\circ f_j\equiv 0$,  $j=1,\dots,n$. But this means that $\Gamma$ is contained in the codimension one complex hyperplane $\ker A\subset\mathbb{C}^N$, which is a contradiction.   
 \end{proof}

\begin{lem}\label{densidad} Let $\Gamma\subset\mathbb{R}^k$ be a set such that , for any $\epsilon>0$, there exists an
 $\epsilon$-basis $u_1,\dots,u_k$ of 
$\mathbb{R}^k$ with 
$$\{\pm u_1,\dots,\pm u_k\}\subset \Gamma.$$ Let $\Omega\subset\mathbb{R}^k$ be open, convex and bounded, and 
let   ${\Sigma}\subset\mathbb{R}^k$ be such that ${\Sigma}\cap\Omega\neq\emptyset$.  Suppose that there exists  $\delta>0$ such that:
 \begin{equation}\label{implica} z\in {\Sigma}, \; \dist (z,\Omega)<\delta \implies z+u\in {\Sigma}, \forall u\in \Gamma.\end{equation}  Then  $\Omega$ is contained in the topological closure
  $\overline{{\Sigma}}$ of $\Sigma$.
\end{lem}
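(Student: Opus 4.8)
The plan is to prove $\Omega\subset\overline{\Sigma}$ by a continuation (connectivity) argument: fixing a base point $z_{0}\in\Sigma\cap\Omega$, I would walk inside $\Sigma$ by adding small elements of $\Gamma$ one at a time, applying \eqref{implica} at every step, until I reach a point of $\Sigma$ as close as desired to a prescribed target $w\in\Omega$. Since both $w\in\Omega$ and the desired accuracy are arbitrary, this yields the conclusion.

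First I would set up the walk. Fix $w\in\Omega$ and $\eta>0$; the goal is to produce a point of $\Sigma$ within $\eta$ of $w$. Pick $\epsilon>0$, small in a way to be fixed at the end (of order $\min(\delta,\eta)/k$), and use the hypothesis on $\Gamma$ to obtain an $\epsilon$-basis $u_{1},\dots,u_{k}$ of $\mathbb{R}^{k}$ with $\{\pm u_{1},\dots,\pm u_{k}\}\subset\Gamma$; let $\Lambda=\mathbb{Z}u_{1}+\dots+\mathbb{Z}u_{k}$. Since $\sum_{i}|u_{i}|<k\epsilon$, rounding coordinates in the basis $(u_{i})$ shows that $z_{0}+\Lambda$ is $(k\epsilon/2)$-dense in $\mathbb{R}^{k}$. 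Because $\Omega$ is convex and $z_{0},w\in\Omega$, the segment $[z_{0},w]$ lies in $\Omega$; I would discretize it as $p_{0}=z_{0},p_{1},\dots,p_{m}=w$ with $|p_{j+1}-p_{j}|<\epsilon$, and for each $j$ choose $q_{j}\in z_{0}+\Lambda$ with $|q_{j}-p_{j}|\le k\epsilon/2$, taking $q_{0}=z_{0}$ and noting $|q_{m}-w|\le k\epsilon/2$.

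The key step is to connect each $q_{j}$ to $q_{j+1}$ by a chain of $\Gamma$-steps all of whose points stay within distance $\delta$ of $\Omega$. Writing $q_{j+1}-q_{j}=\sum_{i}n_{i}u_{i}$ with $n_{i}\in\mathbb{Z}$, I would realize this increment as a sequence of $\sum_{i}|n_{i}|$ elementary steps $\pm u_{i}$, performed in a \emph{balanced} (round-robin) order so that, after any number of steps, the running partial sum differs from the corresponding point of the straight segment $[q_{j},q_{j+1}]$ by at most $\sum_{i}|u_{i}|<k\epsilon$; the essential feature is that this bound is independent of how nearly dependent the $\epsilon$-basis is. Since each $[q_{j},q_{j+1}]$ lies within $k\epsilon/2$ of $[z_{0},w]\subset\Omega$ (convex combinations), every point of the concatenated path from $z_{0}$ to $q_{m}$ lies within $3k\epsilon$ of $\Omega$; so once I require $\epsilon<\delta/(3k)$, the implication \eqref{implica} is applicable at every point of the path. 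Starting from $z_{0}\in\Sigma$ and inducting along the path, \eqref{implica} forces each successive point, and in particular $q_{m}$, into $\Sigma$. Imposing in addition $\epsilon<2\eta/k$ gives $|q_{m}-w|<\eta$, so $q_{m}\in\Sigma\cap B(w,\eta)$, as required.

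I expect the only genuine difficulty to be the step just described: bounding the excursion of a lattice walk away from the line it follows, uniformly over all $\epsilon$-bases, including very ``flat'' ones. The balanced ordering of the elementary steps $\pm u_{i}$ is exactly what controls this; with a careless ordering the partial sums could stray by $\big(\sum_{i}|n_{i}|\big)\epsilon$, a quantity not controlled as $\epsilon\to0$, whereas the balanced ordering confines the walk to an $O(k\epsilon)$-tube around the segment regardless of $(n_{i})$ and of the basis. The remaining ingredients — the $(k\epsilon/2)$-density of $z_{0}+\Lambda$, the discretization of $[z_{0},w]$, and the induction along the path using \eqref{implica} — are routine. (Note that only openness and convexity of $\Omega$ actually enter, through $[z_{0},w]\subset\Omega$; boundedness is not needed for this statement.)
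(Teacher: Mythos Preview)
Your argument is correct and close in spirit to the paper's, but the mechanics differ. The paper also fixes $z_0\in\Sigma\cap\Omega$ and an $\epsilon$-basis with $|u_j|<\min\{\varepsilon,\delta/k\}$, but instead of hugging the segment $[z_0,w]$ it works globally: it tessellates $\mathbb{R}^k$ by translates of the fundamental parallelotope, lets $\mathfrak P$ be the union of tiles meeting $\Omega$, and uses that $\mathfrak P$ is connected (each tile touches the connected set $\Omega$) to produce a vertex path $z_0=w_0,w_1,\dots,w_n=z'$ with $w_{j}-w_{j-1}\in\{\pm u_i\}$ to any target lattice point $z'\in\Omega\cap(z_0+\Lambda)$. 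Every $w_j$ lies in a tile that touches $\Omega$ and has diameter $<\delta$, so $\dist(w_j,\Omega)<\delta$ and \eqref{implica} propagates $\Sigma$ along the path. This completely bypasses your balanced-walk estimate: once one is willing to wander through all of $\Omega$ rather than stay near the segment, no control on the coefficients $n_i$ is needed. Conversely, your route uses convexity more directly and, as you observe, makes no use of the boundedness of $\Omega$. One small correction: the naive round-robin order does not literally give excursion $\le\sum_i|u_i|$ in all cases (try $k=3$ with $(n_1,n_2,n_3)=(1,1,N)$: doing $u_1,u_2$ first already overshoots $k\epsilon$); a slightly smarter interleaving --- placing the $j$th copy of $u_i$ near time $(j-\tfrac12)N/n_i$ --- or an appeal to the Steinitz lemma yields $|m_i(T)-Tn_i/N|=O(1)$ and hence an $O(k\epsilon)$ tube, which is all your argument needs after adjusting the constant in the choice of $\epsilon$.
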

\begin{proof}
Take $z_0\in {\Sigma}\cap\Omega$. Let $\varepsilon>0$ be arbitrary. It is sufficient to prove that there exists an $\epsilon$-basis $u_1,\dots,u_k$ of $\mathbb{R}^k$ such that 
$$\Omega \cap \left(z_0+\mathbb{Z}u_1+\dots+\mathbb{Z}u_k\right)\subset {\Sigma}.$$
By hypothesis we can find a basis $u_1,\dots,u_k$ of $\mathbb{R}^k$ with $|u_j|<\min\{\varepsilon, \delta/k\}$
 for all $j=1,\dots,k$ and such that $\{\pm u_1,\dots,\pm u_k\}\subset \Gamma$.
 Let $\mathcal{T}$ be the tessellation of $\mathbb{R}^k$ generated by the basis $u_1,\dots,u_k$ and the point $z_0$. 
Let $\mathfrak{P}$ be the union of  parallelotopes of $\mathcal{T}$ that meet $\Omega$
--- then $\Omega\subset\mathfrak{P}$ and, in particular, $z_0\in\mathfrak{P}$. 
Consider a point $z'$ in the set 
$$\Omega\cap \left(z_0+\mathbb{Z}u_1+\dots+\mathbb{Z}u_k\right);$$ this point is clearly contained in $\mathfrak{P}$. 
 Since $\mathfrak{P}$ is connected and $z_0\in\mathfrak{P}$, we can
 find points $$w_0=z_0,w_1,\dots,w_n=z'$$ that are vertices of  parallelotopes in $\mathfrak{P}$ and such that 
$$w_j-w_{j-1}\in\{\pm u_1,\dots,\pm u_k\}\subset \Gamma, \;\forall j=1,\dots,n.$$
Observe that, since the parallelotopes in $\mathcal{T}$ have Euclidean diameter smaller than 
$$|u_1|+\dots +|u_k|<\delta,$$ any point $w$ belonging to $\mathfrak{P}$ --- which therefore belongs to a parallelotope touching $\Omega$ ---  satisfies $$\dist (w,\Omega)<\delta.$$ In particular we have $\dist (w_j,\Omega)<\delta,\;\forall j=0,1,\dots,n$. Thus, since $(w_{j+1}-w_{j})\in \Gamma$, if we had $w_j\in {\Sigma}$, by the hypothesis \eqref{implica} we would obtain
 $$w_{j+1}=w_j+(w_{j+1}-w_j)\in {\Sigma}.$$  Therefore, since $w_0 =z_0\in {\Sigma}$, we successively obtain $w_1,\dots,w_n\in {\Sigma}$, so that $z'\in {\Sigma}$. 
\end{proof}

  \section{Proof of the results}
  As we have seen in previous sections, our construction of foliations that are tangent to the distribution $\mathcal{D}$ on $\mathbb{C}^{M+N}$
will be achieved 
by lifting to $\mathcal{D}$ some suitable holomorphic foliations in  $\mathbb{C}^M$. The existence of these foliations in $\mathbb{C}^M$ is guaranteed by Proposition \ref{log} below, whose proof is done in Section \ref{seccion3}.
If $k\in\mathbb{N}$, a diffeomorphism $h\in\diff (\mathbb{C}^M,0)$  will be said  $k$-tangent to the identity if  
the difference $(h-\id)$
has vanishing order greater than $k$ at $0\in\mathbb{C}^M$.

\begin{prop}\label{log} Consider coordinates $(t_1,\dots,t_M)$ in $\mathbb{C}^M$. Let $T\subset\mathbb{C}$ be a finite set, let $\lambda\colon T\to\left(\mathbb{C}^*\right)^{M}$ and let $k\in\mathbb{N}$ be given.    Then, there exists a one dimensional singular holomorphic foliation $\mathcal{G}$  on $\mathbb{C}^M$ satisfying the following properties.
\begin{enumerate}
\item\label{con1}For a suitable choice of numbers $P\in\mathbb{N}$, $\nu_{ij}\in\mathbb{C}^*$, $b_j\in\mathbb{C}$, $i=1,\dots,M-1$, $j=1,\dots,P$, such that the $b_j$ are pairwise distinct,  the foliation $\mathcal{G}$ is defined by the non-autonomous system
\begin{equation}\label{linearsystemG2}\nonumber
\begin{aligned}
 t_1'&= \left(\sum\limits_{j=1}^P \frac{\nu_{1j}}{t_M-b_{j}}\right)t_1\\
 t_2'&=\left(\sum\limits_{j=1}^P \frac{\nu_{2j}}{t_M-b_{j}}\right)t_2\\
 &\hspace{0.2cm}\vdots&\\
t_{M-1}'&=\left(\sum\limits_{j=1}^P \frac{\nu_{(M-1)j}}{t_M-b_{j}}\right)t_{M-1}\\
t_M'&=1.
\end{aligned}
\end{equation}Thus the singular set of $\mathcal{G}$ on $\mathbb{C}^N$ is composed of the points 
$$(0,\dots, 0, b_j),\quad  j=1,\dots, P.$$

\item\label{con2} The set $T$ is contained in $\{b_1,\dots,b_P\}$ and, for each $\tau\in T$, there exists a  diffeomorphism $h_\tau\in\diff (\mathbb{C}^M,0)$ such that,
\begin{enumerate}
 \item \label{con2a}$h_\tau$  is $k$-tangent to the identity; and
\item\label{con2c}  if $\lambda(\tau)=(\lambda_1,\dots,\lambda_M)$, the foliation $\mathcal{G}$ at the singularity $$p_\tau=(0,\dots,0,\tau)\in\mathbb{C}^M $$ is generated by the pushforward of the  linear system
\begin{equation}\label{linearsystemG2}\nonumber
\begin{aligned}
 s_1'&=&\lambda_1s_1\\
 s_2'&=&\lambda_2s_2\\
 &\hspace{0.2cm}\vdots&\\
s_M'&=&\lambda_Ms_M
\end{aligned}
\end{equation}  by the diffeomorphism $f_\tau(s)\colon =p_\tau+h_\tau(s)$. 
\end{enumerate}

\item \label{con2.1}Since it is defined by a rational vector field, the foliation  $\mathcal{G}$ can be viewed as the 
 the strict transform of a singular holomorphic foliation $\mathcal{F}$ by the punctual blow-up
\begin{align*} &\hspace{0.84cm}\pi\colon\widehat{\mathbb{C}^M}\to\mathbb{C}^M
\end{align*} at the origin. In other words, there exists  a singular holomorphic foliation 
$\mathcal{F}$ on $\mathbb{C}^M$  --- which  is generated by a 
polynomial vector field --- such that $\mathcal{F}$ is the pushforward of $\mathcal{G}$   by the map
\begin{align*}(t_1,\dots,t_M)\mapsto (x_1,\dots,x_M)= (t_1,t_1t_2,\dots,t_1t_M).\end{align*} 

\item\label{con4} Let $S$ be the hypersurface  defined by the equation $$ x_1\cdots x_{M-1}\prod\limits_{j=1}^P(x_M-b_jx_1)=0.$$
Then
\begin{enumerate}
\item \label{con4a} $S$ is invariant by $\mathcal{F}$;
\item \label{con4b}  the leaves of $\mathcal{F}$ in $\mathbb{C}^M\backslash S$ are everywhere dense; and 
\item \label{con4c} given $\tau\in T$, the germ hypersurface $\pi^{-1}(S)$ at $p_\tau$ is the image by $f_\tau$ of the hypersurface
$\{s_1\cdots s_M=0\}.$
\end{enumerate}
\item \label{con3}   Given $\epsilon>0$ and a neighborhood $\Delta\subset\mathbb{C}^M$ of the origin, there exists an open set 
 $\mathscr{U}\subset \Delta\backslash S$,   such that the following properties hold.  
 \begin{enumerate}
 \item\label{con3a}  For each $p\in\mathscr{U}$, there exists a set $E$ dense in $\mathscr{U}$, contained in the leaf $L_p$ 
of $\mathcal{F}|_\Delta$ through $p$,  such that  any point $q\in E$ can be connected to $p$ by a curve  in $L_p$ whose Euclidean length is smaller than $\epsilon$.
\item\label{con3b} The set $\pi^{-1}\left(\mathscr{U}\cup S\right)$ is a neighborhood of each point in the exceptional divisor  outside  the tangent cone of $$\prod\limits_{\tau\in T}(x_M-\tau x_1)=0.$$
\item\label{con3c} Let  $\tau\in T$ be such that there exists a real line through $0\in\mathbb{C}$ separating $\lambda_M( \tau)$ from the set
\begin{equation*}\{\lambda_1(\tau),\; \lambda_1(\tau)+\lambda_2(\tau), \dots, \lambda_1(\tau)+\lambda_{M-1}(\tau)\}.\end{equation*}
Then $\pi^{-1}\left(\mathscr{U}\cup S\right)$ is a neighborhood of each point in the afin space $\{t_1=0, t_M=\tau\}$.
Furthermore, if the property above takes place for all $\tau\in T$, then  $\mathscr{U}\cup S$ is a neighborhood of the origin. If $M=2$, the hypothesis above means that there exists a real line separating  $\lambda_2(\tau)$ from $\lambda_1(\tau)$, which is equivalent to the condition $\frac{\lambda_2(\tau) }{\lambda_1(\tau)}\in \mathbb{C}\backslash [0,+\infty)$.

\end{enumerate}
\end{enumerate}
\end{prop}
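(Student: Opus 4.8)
The plan is to realize $\mathcal{G}$ by a Fuchsian system of the form appearing in item \eqref{con1}, keeping the data $P$, $b_j$, $\nu_{ij}$ provisional, and then to fix these data so as to meet the remaining requirements one at a time. The guiding principle is that each requirement amounts either to a finite linear system on the $\nu_{ij}$ or to a generic (Zariski-open, or residual) condition, so it can be accommodated by adjoining enough extra poles $b_j$ in general position.

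First I would write down the leaves explicitly. With $B(t_M)=\prod_{j=1}^{P}(t_M-b_j)$ and $\widehat B_j=B/(t_M-b_j)$, the system is generated by the polynomial vector field
\[
X=B(t_M)\frac{\partial}{\partial t_M}+\sum_{i=1}^{M-1}\Big(\sum_{j=1}^{P}\nu_{ij}\,\widehat B_j(t_M)\Big)t_i\frac{\partial}{\partial t_i},
\]
so the hyperplanes $\{t_i=0\}$ ($i<M$) and $\{t_M=b_j\}$ are $\mathcal{G}$-invariant, the singular set is exactly $\{(0,\dots,0,b_j)\}$ provided every $\nu_{ij}\neq0$, and off these hyperplanes the leaves are $t_i=C_i\prod_{j}(t_M-b_j)^{\nu_{ij}}$. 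For items \eqref{con1}--\eqref{con2}, near a pole $b_{j_0}=\tau\in T$ I would factor out the nonvanishing holomorphic factor $\phi_i(t_M)=\prod_{j\neq j_0}(t_M-b_j)^{\nu_{ij}}$: the change of coordinates $s_i=\phi_i(\tau)\,t_i/\phi_i(t_M)$, $s_M=t_M-\tau$, conjugates $\mathcal{G}$ near $p_\tau$ to the linear foliation with eigenvalues $(\nu_{1j_0},\dots,\nu_{(M-1)j_0},1)$, which I make proportional to $\lambda(\tau)$ by setting $\nu_{ij_0}:=\lambda_i(\tau)/\lambda_M(\tau)$. Taking $f_\tau$ to be the inverse change of coordinates, $h_\tau:=f_\tau(\cdot)-p_\tau$ is automatically $1$-tangent to the identity, and it is $k$-tangent precisely when $\phi_i-\phi_i(\tau)$ vanishes to order $\geq k$ at $\tau$, i.e. when $\sum_{j\neq j_0}\nu_{ij}(\tau-b_j)^{-\ell-1}=0$ for $\ell=0,\dots,k-2$ and all $\tau\in T$. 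For each fixed $i$ this is a linear system on the still-free unknowns $\{\nu_{ij}:b_j\notin T\}$ whose coefficient matrix has maximal rank for $b_j\notin T$ in general position; adjoining at least $(k-1)|T|$ such poles, a generic solution has, in addition, all its coordinates nonzero. This settles items \eqref{con1} and \eqref{con2}.

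Next I would treat the blow-down. The map $\Phi(t)=(t_1,t_1t_2,\dots,t_1t_M)$ is the chart of the punctual blow-up at $0$ in which $E=\{t_1=0\}$ is the exceptional divisor; pushing $X$ forward and clearing the resulting power of $x_1$ yields a polynomial vector field $\widehat Y$ whose $\partial/\partial x_1$-component is divisible by $x_1$, so $\{x_1=0\}$ is invariant, $\mathcal{G}$ is the strict transform of $\mathcal{F}$, the foliation generated by $\widehat Y$, and the total transform $\pi^{-1}(S)$ is, in this chart, $\{t_1^{P+M-1}t_2\cdots t_{M-1}\prod_j(t_M-b_j)=0\}$, a union of $\mathcal{G}$-invariant hyperplanes; hence $S$ is $\mathcal{F}$-invariant. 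This gives items \eqref{con2.1} and \eqref{con4a}, and since the hyperplanes of $\pi^{-1}(S)$ through $p_\tau$ are $\{t_1\cdots t_{M-1}(t_M-\tau)=0\}=f_\tau(\{s_1\cdots s_M=0\})$, also item \eqref{con4c}. For the density \eqref{con4b}, $\Phi^{-1}$ identifies $\mathbb{C}^M\setminus S$ with $(\mathbb{C}^*)^{M-1}\times(\mathbb{C}\setminus\{b_1,\dots,b_P\})$; over a fixed $t_M=c$ a leaf meets the fiber in the orbit of one point under the holonomy subgroup $\Lambda:=\langle\mu_1,\dots,\mu_P\rangle\leq(\mathbb{C}^*)^{M-1}$, with $\mu_j=(e^{2\pi\sqrt{-1}\nu_{1j}},\dots,e^{2\pi\sqrt{-1}\nu_{(M-1)j}})$, and since the $k$-tangency conditions are linear and split over $i$, a generic choice of the remaining $\nu_{ij}$ inside the corresponding affine subspace makes $\Lambda$ dense in $(\mathbb{C}^*)^{M-1}$. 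Then every leaf with all $C_i\neq0$ — in particular every leaf meeting $\mathbb{C}^M\setminus S$ — is dense, which is item \eqref{con4b}.

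What remains is the open set $\mathscr{U}$ of item \eqref{con3}, and this is the technical heart. Fix $\epsilon>0$ and $\Delta$. Over a small disc $\Omega$ in the $t_M$-line disjoint from $\{b_j\}\cup T$ the foliation $\mathcal{G}$ is a product, and the holonomy maps around the $b_j$, together with the density of $\Lambda$, let one join any point of a leaf to a dense subset of its trace on a small polydisc by leaf-paths of Euclidean length $<\epsilon$; this is where Lemma \ref{base real} enters — it produces, from the holonomy maps, arbitrarily small $\mathbb{R}$-bases of the relevant translation group — together with Lemma \ref{densidad}, which turns $\epsilon$-bounded recurrence into density on convex regions. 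Taking $\mathscr{U}$ to be the $\Phi$-image of a set of the form $\{0<|t_1|<\delta(t_2,\dots,t_M)\}$ intersected with these good regions then yields item \eqref{con3a}; and since, near $E$ and away from the fibers $\{t_M=\tau\}$ with $\tau\in T$, $\mathscr{U}$ is a deleted neighborhood of $E$, one gets item \eqref{con3b}. Finally, near a point of $\{t_1=0,\,t_M=\tau\}$ one passes to the linear model $f_\tau$: the hypothesis that a real line through $0$ separates $\lambda_M(\tau)$ from $\{\lambda_1(\tau),\lambda_1(\tau)+\lambda_2(\tau),\dots,\lambda_1(\tau)+\lambda_{M-1}(\tau)\}$ puts the eigenvalue configuration in the Poincaré domain relative to the invariant hyperplane $\{s_M=0\}$, forcing leaves that meet $\mathscr{U}$ to accumulate on all of $\{s_M=0\}$ near $p_\tau$ (this generalizes the classical fact that a non-node planar singularity has leaves accumulating on each separatrix); this gives item \eqref{con3c}, and since the $p_\tau$ are the only singularities of $\mathcal{G}$ on $E$, the final assertion. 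The hard part will be exactly this last step: keeping the Euclidean lengths of the connecting leaf-paths under uniform control, and analysing the accumulation near the singular fibers $\{t_M=\tau\}$ via the linear normal form — it is this analysis that forces the hypothesis on $\lambda$ to take the separating-line form in which it is stated.
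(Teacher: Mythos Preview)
Your overall architecture---Fuchsian system, linearization at each $p_\tau$ with $\nu_{ij_0}=\lambda_i(\tau)/\lambda_M(\tau)$, the $k$-tangency requirement as a linear system on the remaining residues, blow-down to $\mathcal F$, density via the multiplicative holonomy group---matches the paper's, and your treatment of items \eqref{con1}--\eqref{con2.1} and \eqref{con4} is essentially correct. The real divergence is in item \eqref{con3}, and there is a genuine missing idea.

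For \eqref{con3a} you assert that ``the holonomy maps around the $b_j$, together with the density of $\Lambda$, let one join any point of a leaf to a dense subset\ldots\ by leaf-paths of Euclidean length $<\epsilon$'', and you invoke Lemmas \ref{base real} and \ref{densidad}. Those lemmas belong to the proof of Theorem \ref{teorema2}, not here, and density of $\Lambda$ alone does not control path length: reaching a nearby orbit point may require winding around poles many times, and the length then blows up. The paper's mechanism is different and is not a genericity argument. It \emph{prescribes} an additional block of poles $b_{m+1},\dots,b_{m+n}$ with residues $\nu_{i,m+j}=\mu_{ij}$ chosen so that $\im\mu_{1j}>0$ and $\im(\mu_{1j}+\mu_{ij})>0$ for all $i$. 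This makes the corresponding holonomy maps $g_1,\dots,g_n$ simultaneously (i) generators of a dense subgroup and (ii) uniform contractions in the norm $\|t\|=|\pi(t)|$ (Lemma \ref{densote}). The contraction is what makes Lemma \ref{assertion2} work: a long word in the $g_j$ gives an integral curve whose length is bounded by a convergent geometric series, independent of the word length. Without building in this contraction you have no uniform bound, and \eqref{con3a} fails.

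There is a second gap in \eqref{con3b}. You say that away from the fibers $t_M=\tau$, $\tau\in T$, the set $\mathscr U$ is automatically a deleted neighborhood of $E$. But $E$ also meets the singular fibers $t_M=b_j$ for the \emph{extra} poles $b_j\notin T$, and the fiber $t_M=\infty$. Near those points you need the same spiral-and-contract argument as in \eqref{con3c}, which requires the separating-line condition to hold at every $b_j\notin T$ and at infinity as well. This is not automatic: the $\nu_{ij}$ for those $j$ are constrained by the $k$-tangency linear system, so you cannot simply pick them freely. The paper handles this in Lemma \ref{nuca} by first fixing the $T$-block and the contracting $\mu$-block, then finding a vector $\mathfrak v\in\ker G$ with all imaginary parts of a common sign and with $\im\sum\mathfrak v_j\neq0$, and translating the remaining block by a large real multiple of $\mathfrak v$; this forces the separation condition at every extra pole (Lemma \ref{nuca}\eqref{nu4}) and at infinity (Lemma \ref{nuca}\eqref{nu5}) while preserving the linear $k$-tangency constraints. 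That construction, together with the length estimates of Lemmas \ref{pila1}--\ref{pila4}, is what actually yields \eqref{con3b} and the global statement in \eqref{con3c}.
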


  The rest of this section is devoted to the proof of our main results. 
  \subsection*{Proof of Theorem \ref{teorema2}}
Let
  $\pi\colon\widehat{\mathbb{C}^M}\to\mathbb{C}^M$ the blow-up at the origin an consider coordinates $(t_1,\dots,t_M)$ in $\widehat{\mathbb{C}^M}$ such that $$\pi(t_1,\dots,t_M)=(t_1,t_1t_2,\dots,t_1t_M).$$ Let $\tilde{\omega}=\pi^*(\omega):=(\pi^*(\omega_1),\dots,\pi^*(\omega_N))$. Set 
    \begin{equation} \label{ecat}d\tilde{\omega}=\sum{\theta_{L}^{ij}t^Ldt_idt_j},\end{equation}  where 
  $\theta_L^{ij}\in\mathbb{C}^N$, $L=(l_1,\dots,l_M)\in \left(\mathbb{Z}_{\ge 0}\right)^M$ and the summation extends over  $  |L|\ge 0$,  $1\le i < j\le M$. 
  Recall that 
  \begin{equation} \nonumber d\omega=\sum{c_{K}^{ij}x^Kdx_idx_j}. \end{equation} In order to express the relation between the coefficients $\theta_L^{ij}$ and the $c_K^{ij}$, we analyze the pullback by $\pi$
  of a monomial 2-form  $x^Kdx_idx_j$ with $1\le i<j\le M$. Suppose first that $i>1$. Then
  \begin{align}\nonumber\pi^*(x^Kdx_idx_j)&=(t_1)^{k_1}(t_1t_2)^{k_2}\dots (t_1t_M)^{k_M}d(t_1t_i)d(t_1t_j)\\
 \nonumber &=t_1^{k_1+\dots +k_M}t_2^{k_2}\dots t_M^{k_M}(t_1^2dt_idt_j+t_1t_idt_1dt_j-t_1t_jdt_1dt_i)\\
  \label{ecublow}&=t_1^{k_1+\dots +k_M+2}t_2^{k_2}\dots t_M^{k_M}dt_idt_j\\
  \nonumber&\hspace{1.5cm}+t_1^{k_1+\dots+k_M+1}t_2^{k_2}\dots t_i^{k_i+1} \dots  t_M^{k_M}dt_1dt_j\\
 \nonumber&\hspace{3cm} -t_1^{k_1+\dots+k_M+1}t_2^{k_2}\dots t_j^{k_j+1} \dots  t_M^{k_M}dt_1dt_i.
   \end{align} On the other hand, if $i=1$ we have
   \begin{align}\nonumber\pi^*(x^Kdx_1dx_j)&=(t_1)^{k_1}(t_1t_2)^{k_2}\dots (t_1t_M)^{k_M}d(t_1)d(t_1t_j)\\
  &=t_1^{k_1+\dots +k_M+1}t_2^{k_2}\dots  t_M^{k_M}dt_1dt_j.
   \label{ecublow1}\end{align} 
   From equations \eqref{ecublow} and \eqref{ecublow1} we can deduce the following facts.
   \begin{enumerate}
   \item \label{111}If $i>1$,  the monomial $t^Ldt_idt_j$ only appears in the pullback of the monomial
   $x^Kdx_idx_j$ with $K$ such that  $$L=\phi(K)\colon =(k_1+\dots+k_M+2,k_2,\dots,k_M)$$ and we have $$\theta_L^{ij}=c_{\phi^{-1}(L)}^{ij}.$$ 
   \item \label{222} If we define 
   \begin{align*}\varphi_1(K)\colon &=(k_1+\dots+k_M+1,k_2,\dots,k_M)\\
   \varphi_j(K)\colon & =(k_1+\dots+k_M+1,k_2,\dots, k_j + 1,\dots,k_M), \hspace{0.2cm} j =2, \dots, M,
   \end{align*}
   the monomial $t^Ldt_1dt_j$ appears in the pullbacks of the monomials \begin{align*} 
   &x^{K}dx_idx_j,\; \varphi_i(K)=L,\: i< j, \textrm{ and}\\ 
   &x^{K}dx_jdx_i,\; \varphi_i(K)=L,\; i> j.
   \end{align*}
   Thus, if we put $c_K^{ij}=-c_K^{ji}$ for $i>j$,  we have $$\theta_L^{1j}=\sum_{i\neq j}c_{\varphi_i^{-1}(L)}^{ij}.$$ 
   \item From  items \eqref{111} and \eqref{222} above we can express  the $c_K^{ij}$ in terms of the $\theta_L^{ij}$ and we obtain
    \begin{align*}
   c_K^{1j}&=\theta_{\varphi_1(K)}^{1j}-\sum_{i\neq 1, j}\theta_{\phi\varphi_i^{-1}\varphi_1(K)}^{ij}\\
   c_K^{ij}&=\theta_{\phi(K)}^{ij} \hspace{0.5cm} (i\ne 1).
   \end{align*}
   \item Since all the relations founded above are linear, we conclude that the coefficients $\theta_L^{ij}$ span the same space spanned
   by the $c_K^{ij}$, that is, the space $W_{\mathcal{D}}$.  
   \item \label{condi5} From the equations \eqref{ecublow} and \eqref{ecublow1} we see that every monomial 
   $t_1^{l_1}\dots t_M^{l_M}dt_idt_j$ appearing after the blow-up with a nonzero coefficient is such that
    $$l_1\ge l_2+\dots+l_M.$$ 
    \end{enumerate}
  
   Given $\tau\in\mathbb{C}$,
   let $\tilde{\omega}_{\tau}$ be the expression of $\tilde{\omega}$ in the coordinates
    $$s\colon=(s_1,\dots,s_M)=(t_1,t_2,\dots,t_{M-1},t_M-\tau).$$  By a straightforward computation from Equation \eqref{ecat} we can write
\begin{equation}\label{new} d\tilde{\omega}_\tau=\sum\Theta_L^{ij}(\tau)s^Lds_ids_j,\end{equation}
where

 \begin{equation}\label{teton}\Theta_{L}^{ij}(\tau)=\sum\limits_{k\ge 0}\theta_{(l_1,l_2,\dots,l_M+k)}^{ij}
 \binom{l_M+k}{l_M}\tau^{k}.\end{equation} From fact \eqref{condi5} above, it
 follows that the coefficient
 $\theta_{(l_1,l_2,\dots,l_M+k)}^{ij}$ can be nonzero only if $k\le l_1$.  Then $\Theta_L^{ij}$ is a  polynomial function of $\tau$ such that 
 \begin{equation}\deg \Theta_L^{ij}\le l_1\le |L|.\label{tetita}\end{equation}   
 
 Choose  a finite set  $\mathfrak{M}$ of monomials  in
 $\{\theta_L^{ij}t^Ldt_idt_j\}$   whose coefficients span the space $W_{\mathcal{D}}$ and let $d$ be the maximum degree of the monomials in
 $\mathfrak{M}$. It follows from the discussion above that for each $\theta_{\mathfrak{L}}^{\boldsymbol{\mathfrak{ij}}}t^{\mathfrak{L}}dt_{\boldsymbol{\mathfrak i}}dt_{\boldsymbol{\mathfrak j}}$  in $\mathfrak{M}$  we have 
 \begin{equation} \label{gradod}\deg\Theta_{\mathfrak{L}}^{\boldsymbol{\mathfrak{ij}}}\le |\mathfrak{L}|\le d. \end{equation}
  By Remark \ref{remark1}, we can take a finite set  
   $\Lambda$ in $\mathbb{Z}^{M}$ with the following property:\\

\begin{quote}
   for each monomial  $\theta_{\mathfrak{L}}^{\boldsymbol{\mathfrak{ij}}}t^{\mathfrak{L}}dt_{\boldsymbol{\mathfrak i}}dt_{\boldsymbol{\mathfrak j}}$
in $\mathfrak{M}$, there exist  $\mathbb{C}$-linearly independent $\boldsymbol{\mathfrak{i}}$-elementary vectors $\mathfrak{m}_1,\dots, \mathfrak{m}_{M-1}$ in $\Lambda$  such that 
\begin{align}\label{star} \mathfrak{m}_1\cdot \mathfrak{L}_{\boldsymbol{\mathfrak{ij}}}=\dots=\mathfrak{m}_{M-1}\cdot\mathfrak{L}_{\boldsymbol{\mathfrak{ij}}}=0, \end{align} where, as defined in \eqref{defili}, we have  $$\mathfrak{L}_{\boldsymbol{\mathfrak{ij}}}=\mathfrak{L}+e_{{\boldsymbol{\mathfrak{i}}}}
 +e_{\boldsymbol{\mathfrak{j}}}.$$  \\
\end{quote}  

Take a finite set 
$T\subset \mathbb{C}$ and a function $\lambda \colon T\to \Lambda$  such that
\begin{align} \label{lamb2}
\textrm{ for each $m\in\Lambda$  the set  $\lambda^{-1}(m)$ has at least  $d+1$ points.\hspace{1.8cm}}
\end{align}
Let $\mathcal{G}$ be the foliation given by Proposition \ref{log} associated to  $\lambda:T\to\mathbb{C}^M$ and $k=d+1$.
 The foliation $\mathcal{F}=\pi_*(\mathcal{G})$  is generated by a polynomial vector field $X$  with an isolated singularity at the origin. The vector field $Z$ mentioned in Theorem \ref{teorema2} will be taken being equal to the
  lift $X^\mathcal{D}$ of $X$ to $\mathcal{D}$, so the foliation defined by $Z$ is the lift $\mathcal{F}^\mathcal{D}$
  of $\mathcal{F}$ to $\mathcal{D}$.
  
   Without loss of generality we can suppose that $\Delta =\Delta_1\times\Delta_2$, where $\Delta_1$ and $\Delta_2$ are balls centered at the origins of $\mathbb{C}^M$ and $\mathbb{C}^N$, respectively. We also can  assume that the closure 
of  $\Delta$ is contained in the domains of definition of 
$\mathcal{D}$ and the first integral $H_{\mathcal{D}}$ --- see \eqref{defh}.  Let $\mathfrak{K}>0$ be such that $|\omega|\le \mathfrak{K}$ on $\Delta_1$. Let $r>0$ be the radius of $\Delta_2$ and take $\epsilon>0$ such that 
\begin{equation}\label{repsilon} 6\mathfrak{K}\epsilon<r.\end{equation}
 Associated to the number $\epsilon$,  we have an open set  $ {\mathscr{U}}\subset \Delta_1$ in 
$\mathbb{C}^M$ as given by
\eqref{con3} of Proposition 
\ref{log}. 
Set $$\Delta'={\mathscr{U}}\times\left\{z\in\mathbb{C}^N\colon |z|<\frac{r}{2}\right\}\subset\Delta.$$   Given $p\in\Delta_1$, we set
 $$\Delta_p=\{p\}\times\Delta_2$$ and $$\Delta_p'=\{(p,z)\colon |z|<\frac{r}{2}\}\subset\Delta_p.$$ We clearly have $\Delta'=\bigcup\limits_{p\in {\mathscr{U}}}\Delta'_p.$
 Let $\mathcal{L}$ be a leaf of $\mathcal{F}^{\mathcal{D}}|_{\Delta}$ passing through a point in $\Delta'$. Let $c\in\mathbb{C}^{\kappa}$ be the value taken 
 by $H_{\mathcal{D}}$ on $\mathcal{L}$ and set  $$H_\mathcal{L}=\left({H_\mathcal{D}}|_\Delta\right)^{-1}(c).$$
 That is, $H_\mathcal{L}$ is the level of $H_\mathcal{D}|_\Delta$ containing $\mathcal{L}$.  We will show the following two assertions.
 \begin{enumerate}[label=(\textbf{\Roman*})]
 \item\label{aser1} If $\mathcal{L}$ meets $\Delta'_p$  for some  $p\in {\mathscr{U}}$, then 
 $\mathcal{L}\cap\Delta'_p$ is a dense subset of
  $H_\mathcal{L}\cap\Delta'_p$.  
   \item \label{aser2} If $\mathcal{L}$ contains a point $(p,\mathfrak{z})$ with $p\in {\mathscr{U}}$ and $|\mathfrak{z}|<\frac{r}{3}$, then $\mathcal{L}$ meets $\Delta'_{q}$ for every $q$ in a dense subset of ${\mathscr{U}}$. 
 \end{enumerate}

\noindent\emph{Claim.} Assertions \ref{aser1} and \ref{aser2} imply Theorem \ref{teorema2}. \\

\begin{proof}Consider 
\begin{align}\nonumber 
\Delta''={\mathscr{U}}\times\left\{z\in\mathbb{C}^N\colon |z|<\frac{r}{3}\right\}\subset\Delta'
\end{align}
and define 
$\Delta^*$ as the saturation of $\Delta''$ by the foliation $\mathcal{F}^\mathcal{D}|_{\Delta}$. Let $\mathcal{L}$ be a leaf of $\mathcal{F}^\mathcal{D}|_{\Delta^*}$. It follows from the definition of $\Delta^*$ that $\mathcal{L}$
  is a leaf of $\mathcal{F}^\mathcal{D}|_{\Delta}$ passing through a point in $\Delta''$. Then, by Assertion
  \ref{aser2} we have that  $\mathcal{L}$ meets  $\Delta_q'$ for all $q$ in a set $E$ that is dense in ${\mathscr{U}}$. Therefore,
  by Assertion \ref{aser1}, the closure of $\mathcal{L}$ contains the set
  $$H_\mathcal{L}\cap \Delta'_q$$ for all $q\in E$. Let us prove that  $\overline{\mathcal{L}}$ contains the set $$ H_\mathcal{L}\cap \Delta'_p$$ for all $p\in {\mathscr{U}}$. 
   Take any $p\in {\mathscr{U}}$ and  consider a point $$(p,\mathfrak{z})\in H_\mathcal{L}\cap \Delta'_p.$$ 
  Since $H_\mathcal{D}$ is expressed --- see \ref{defh} --- in the form  $H_\mathcal{D}=A(z)-g(x)$,   the levels of $H_\mathcal{D}$ 
  are transverse to the fibers $x=cst.$ In particular, the manifold 
   $H_\mathcal{L}$ is transverse to the fibers $x=cst.$  Thus, in a neighborhood of $(p,\mathfrak{z})$
   in $H_\mathcal{L}$, the sets $H_\mathcal{L}\cap \Delta'_q$, $q\in {\mathscr{U}}$ defines a locally trivial fibration,  and
    therefore we can find a sequence of points $(q_n,\mathfrak{z}_n)\in H_\mathcal{L}$, $n\in\mathbb{N}$ 
    with $q_n\in E$, such that  $(q_n,\mathfrak{z}_n)\rightarrow (p,\mathfrak{z})$. Since --- as we have 
    seen above --- the closure of $\mathcal{L}$ contains the sets $H_\mathcal{L}\cap \Delta'_{q_n}$,
  we have that $\overline{\mathcal{L}}$ contains the point $(q_n,\mathfrak{z}_n)$ for each $n\in\mathbb{N}$ 
  and therefore  $\overline{\mathcal{L}}$ contains the point $(p,\mathfrak{z})$. Thus,  we have  that
  
  \begin{equation}\label{todope}\overline{\mathcal{L}}\supset \bigcup\limits_{p\in {\mathscr{U}}}H_\mathcal{L}\cap \Delta'_p=  H_\mathcal{L}\cap \Delta'\supset H_\mathcal{L}\cap \Delta''.\end{equation} 
  
  Now, since $\Delta^*$ is the saturation of $\Delta''$ we have that $\overline{\mathcal{L}}$ contains the set $H_\mathcal{L}\cap \Delta^*$. Finally, let us prove the last assertion of Theorem \ref{teorema2}.
  Suppose that $M=2$. Then the foliation $\mathcal{F}$ given by Proposition \ref{log} is a foliation on
   $\Delta_1\subset \mathbb{C}^2$. Given any $\tau\in T$, we have $\lambda(\tau)=(m_1,m_2)\in \Lambda$ and so, from the definition of $\Lambda$, we deduce
  that $m_1/m_2$ is a rational negative number.  It follows from  \eqref{con3c} of Proposition \ref{log}  that 
  $\mathscr{U}\cup S$
  is a neighborhood of $0\in\mathbb{C}^2$. 
  Then $$\mathcal{B}\colon =(\mathscr{U}\cup S) \times\left\{z\in\mathbb{C}^N\colon |z|<\frac{r}{3}\right\}$$
  is a neighborhood of $0\in\mathbb{C}^{2+N}$. Thus, if we set  $\mathcal{S}\colon= S\times \Delta_2$, 
  we can see that   $$\mathcal{B}\backslash \mathcal{S} =\mathscr{U}\times\left\{z\in\mathbb{C}^N\colon |z|<\frac{r}{3}\right\} =\Delta'' \subset \Delta^*.$$ 
 Thus, Theorem \ref{teorema2} is reduced to the proof of assertions \ref{aser1} and \ref{aser2}. 
  \end{proof}
  
 \subsection*{Proof of Assertion \ref{aser1} }
Suppose that $\mathcal{L}$ pass through a point $({p},\mathfrak{z})$ with $p\in {\mathscr{U}}$ and $|\mathfrak{z}|<r$. Recall --- see \eqref{defh} --- that
$$H_{\mathcal{D}}(x,z)=A(z)-g(x).$$  Then we have 
\begin{align*} H_\mathcal{L}\cap\Delta_p&=\{(p,z)\colon |z|<r,  A(z)-g(p)=A(\mathfrak{z})-g(p)\}\\ 
&=\{(p,z)\colon |z|<r,  A(z)=A(\mathfrak{z})\}\\
&=\{(p,\mathfrak{z}+w)\colon  |\mathfrak{z}+w|<r, w\in W_{\mathcal{D}}\}.
\end{align*}
Thus, since $\mathcal{L}\cap\Delta_p\subset H_\mathcal{L}\cap\Delta_p$, there exists a set 
$\Sigma\subset W_{\mathcal{D}}$ such that 
\begin{align}\label{lapiz1}\mathcal{L}\cap\Delta_p =\{(p,\mathfrak{z}+w)\colon w\in \Sigma\}.\end{align} We clearly have $\Sigma\neq \emptyset$, because $0\in \Sigma$. Let $\Omega\subset W_{\mathcal{D}}$ be defined by the equality
\begin{align}\label{lapiz2}H_\mathcal{L}\cap\Delta_p'=\{(p,\mathfrak{z}+w)\colon w\in\Omega\}.\end{align} It is sufficient to prove that $\Omega \subset \overline{\Sigma}$.
This fact will be a consequence of Lemma \ref{densidad},  so we only have to verify the hypotheses of this lemma. 

We begin with the definition of the set $\Gamma$. 
Let $E$ be the set in the leaf $L_p$ of $\mathcal{F}|_{\Delta_1}$ through $p$  as  given by \eqref{con3a} of Proposition 
\ref{log}. Thus, given $q\in E$ there exists a curve $\beta=\beta_q$ in $L_p$,
 connecting $p$ with $q$ and such that $\ell(\beta)<\epsilon$. 
  Fix $\tau$ in the set $$T^*\colon=\left\{\tau\in T\colon \lambda(\tau)\in \Lambda\right\}.$$ Set 
 $$p_\tau =(0,\dots, 0, \tau)\in\mathbb{C}^M$$  and let  $f_\tau=p_\tau+h_\tau$ be as given by \eqref{con2} of Proposition \ref{log}. By \eqref{con3a} of Proposition \ref{log} the set $E$ is dense in $\mathscr{U}$. Moreover, from
 \eqref{con3b} of Proposition \ref{log} we  deduce that  $\pi^{-1}(\mathscr{U})$ has $p_\tau$ as a limit point. Then 
$\tilde{q}\colon =\pi^{-1}(q)$ can be choosen close to $p_\tau$ so that $\tilde{q}$  belongs to the image $U_\tau$ of  $f_\tau$.
 Then
 $\tilde{q}=f_\tau(\tilde{s})$, where 
$\tilde{s}\in\mathbb{C}^M$ is  close to the origin. By \eqref{con3} of Proposition \ref{log} the set $\mathscr{U}$ is disjoint from the
hypersurface $S$, so that  $p\notin S$ and therefore $q\notin S$. Thus,  it follows from \eqref{con4} of Proposition \ref{log} that
  $\tilde{s}\in\left(\mathbb{C}^*\right)^M$. Let $$\lambda (\tau)=m=(m_1,\dots,m_M)\in\Lambda$$ and 
 let  $\alpha=\alpha_q$ be the elementary $m$-loop
  based
  at $\tilde{s}$. As we have seen in Section \ref{seccionlifting}, the loop $\alpha$ is contained in a leaf of the system
  \begin{equation}\nonumber
\begin{aligned}
 s_1'&=&m_1s_1\\
 &\hspace{0.2cm}\vdots&\\
s_M'&=&m_Ms_M.
\end{aligned}
\end{equation}   By  \eqref{con2} of Proposition \ref{log}, the  system above coincides with the pullback
foliation  ${f_\tau}^*(\mathcal{G})$ in a neighborhood of the origin.
Then $f_\tau\circ\alpha$ is a loop  based at $\tilde{q}$ and contained in the leaf of $\mathcal{G}$ through $\tilde{q}$. Therefore, provided $\tilde{s}$ is small enough,  
 $\pi\circ f_\tau\circ \alpha$ is a loop based at $q$ and contained in the leaf of $\mathcal{F}|_{\Delta_1}$ through $q$, which coincides with the leaf $L_p$ of 
$\mathcal{F}|_{\Delta_1}$ through $p$.
 Consider the loop $$\gamma_q=\beta^{-1}*(\pi\circ f_\tau\circ\alpha)*\beta,$$
 where the symbol $*$ stands for the concatenation of paths.
  This loop is based at $p$ and  contained in $L_p$. We can assume that $\tilde{q}$ is close enough to $p_\tau$ such that the length of 
$\pi\circ f_\tau\circ \alpha$ is less than $\epsilon$, so that the length of $\gamma_q$ is less than $3\epsilon$. The inverse path
$\gamma_q^{-1}$ is also a loop  based at $p$ of length smaller that $3\epsilon$. 
Consider the set $$\Gamma=\left\{\int\limits_{\gamma}\omega \colon \gamma\in\{\gamma_q,\gamma_q^{-1}\}, q\in E,
\tilde{q}\in U_\tau, \tau\in T^* \right\}.$$ 

\noindent\emph{Verification of the  hypotheses of Lemma \ref{densidad}.  } 
  By \eqref{repsilon} we can take $$\delta=\frac{r}{2}-3\mathfrak{K}\epsilon>0.$$ We start with the verification of the hypothesis \eqref{implica} of Lemma \ref{densidad}.   Consider
  $w\in {\Sigma}$ with $\dist (w,\Omega)<\delta$ and $u\in\Gamma$ --- recall that $\Sigma$ and $\Omega$ are defined in 
  \eqref{lapiz1} and \eqref{lapiz2}. Then $$u=\int\limits_{\gamma}\omega,$$ where $\gamma\in\{\gamma_q, \gamma_q^{-1}\}$,
 $q\in E$, $\tilde{q}\in U_\tau$ and  $\tau\in {T^*}$.  Take $w'\in\Omega$  such that $|w-w'|<\delta$. Since  $w'\in\Omega$
  we have $(p,\frak{z}+w')\in\Delta'_p$, hence $$|\frak{z}+w'|<\frac{r}{2}$$ and therefore $$|\frak{z}+w|\le |\frak{z}+w'|+|w-w'|<\frac{r}{2}+\delta.$$  Then 
  $$|\frak{z}+w|+\mathfrak{K}\ell(\gamma)<\frac{r}{2}+\delta+\mathfrak{K}(3\epsilon)=r,$$ so it follows from Lemma \ref{levantamiento}  that the lifting $\tilde{\gamma}$ of
 $\gamma$ to $\mathcal{D}$ starting at $(p,\frak{z}+w)$ is well defined, it  is contained in $\Delta$ and its ending point has the form 
 $$\left(p,\frak{z}+w+\int\limits_{\gamma}\omega\right)=(p,\frak{z}+w+u).$$ Then the point $(p,\frak{z}+w+u)$ belongs to the leaf
  of $\mathcal{F}_\mathcal{D}|_\Delta$ through $(p,\frak{z}+w)$. Observe that this leaf through $(p,\frak{z}+w)$ coincides with 
  the  leaf $\mathcal{L}$ through
  $(p,\frak{z})$, because $w\in {\Sigma}$. Then $(p,\frak{z}+w+u)\in \mathcal{L}$
  and therefore  $w+u\in {\Sigma}$. Hypothesis \eqref{implica} is verified.\\
 It remains to prove the condition on $\Gamma$ required by Lemma \ref{densidad}.  
Since $0\in {\Sigma}$,  from Hypothesis \eqref{implica} proved above we have that  $0+u\in {\Sigma}$ for all $u\in\Gamma$. Then
$\Gamma\subset {\Sigma}$ and, in particular, 
\begin{align}\label{lapicero}\Gamma\subset W_\mathcal{D}.\end{align}
Recall that, given $\tau\in T^*$ with $\lambda(\tau)=m$ and $q\in E$ with $\tilde{q} f_\tau (\tilde{s})\in U_\tau$, 
$$\pm\int\limits_{\gamma_q}\omega\in \Gamma.$$   But
 $$\int\limits_{\gamma_q}\omega =\int\limits_{\beta^{-1}*\pi\circ f_\tau\circ\alpha *\beta}\omega =\int\limits_{\pi\circ f_\tau\circ\alpha}\omega 
 =\int\limits_{ f_\tau \circ\alpha}\tilde{\omega}=
 \int\limits_{ h_\tau \circ\alpha}\tilde{\omega}_\tau=\int\limits_{\alpha}h_\tau^*(\tilde{\omega}_\tau)=\rho_\tau(\tilde{s}), $$
 where  \begin{equation}\rho_\tau\colon=\rho(h_\tau^*(\tilde{\omega}_\tau),m) \label{rot}\end{equation} is as given by \eqref{coment} of  Proposition \ref{retornos}.    
   Then  \begin{align}\label{templanza}\tau\in T^*, q\in E, \tilde{q} = f_\tau (\tilde{s}) \implies \pm \rho_\tau(\tilde{s})\in \Gamma.
   \end{align}  Since $p_\tau$ is a limit point of
    $\pi^{-1}(\mathscr{U})$, 
    $$ \mathscr{U}_\tau \colon = f_\tau^{-1}\left(\pi^{-1}(\mathscr{U})\cap U_\tau\right)$$
    is a nonempty open subset of $\mathbb{C}^M$  accumulating at the origin. Recall that 
    $q$ can take any value in the dense subset $E$ of $\mathscr{U}$. Then  
 $\tilde{q}$ can be arbitrarily chosen  in a dense subset of $\pi^{-1}(\mathscr{U})\cap U_\tau$, so that, in view of \eqref{templanza}, we find a  dense
 subset $C_\tau\subset \mathscr{U}_\tau$ such that
 \begin{equation}\label{porta}\pm\rho_\tau(\tilde{s})\in \Gamma,\;\tilde{s}\in C_\tau.\end{equation} Then,
 since $C_\tau$ is  dense in $\mathscr{U}_\tau$ and  $\Gamma\subset W_{\mathcal{D}}$, we have 
 \begin{equation}\label{porta1}\pm\rho_\tau(\tilde{s})\in W_{\mathcal{D}},\;\tilde{s}\in \mathscr{U}_\tau.\end{equation}
 Thus, 
  the set
 \begin{align}\label{set+}\left\{\rho_\tau(\tilde{s})\colon \tau\in {T}^*, \tilde{s}\in\mathscr{U}_\tau\right\}\end{align} is contained in $W_\mathcal{D}$.
  Let $W\subset W_\mathcal{D}$ be the complex vector space spanned by the set in \eqref{set+}.
Given $\epsilon>0$, by  Lemma \ref{base real} 	 we can find a real
   $\epsilon$-basis of $W$ of the form $$\rho_{\tau_1}(c_1),\dots,\rho_{\tau_{n}}(c_{n}),$$
 where $\tau_1,\dots,\tau_{n}\in {T}^*$ and $c_1\in\mathscr{U}_{\tau_1},\dots,c_{n}\in \mathscr{U}_{\tau_n}$. 
 Since each $C_\tau$ is dense in $\mathscr{U}_\tau$, we can find $c_1'\in C_{\tau_1},\dots,c_{n}'\in C_{\tau_n}$
  such that
 $$\rho_{\tau_1}(c'_1),\dots,\rho_{\tau_{n}}(c'_{n})$$ is a real $\epsilon$-basis of $W$. Since each
  $\pm\rho_{\tau_j}(c'_j)$  belongs to $\Gamma$ --- see \eqref{porta},  we conclude that
   $$\{\pm \rho_{\tau_1}(c'_1),\dots,\pm\rho_{\tau_{n}}(c'_{n})\}\subset \Gamma.$$   Therefore, in order to complete
   the  verification of the
   condition on $\Gamma$ required by Lemma \ref{densidad}, 
   it suffices to show that    $W_\mathcal{D}= W$. With this objective in mind, since $W_\mathcal{D}\supset W$, 
it is enough to prove that the coefficients of the monomials in $\mathfrak{M}$ 
 --- they span $W_\mathcal{D}$ --- are all contained in $W$. 
Let $\theta_{\mathfrak{L}}^{\boldsymbol{\mathfrak{ij}}}t^{\mathfrak{L}}dt_{\boldsymbol{\mathfrak i}}dt_{\boldsymbol{\mathfrak j}}$ be a monomial 
in $\mathfrak{M}$. Since --- see \eqref{teton} --- we have $\theta_{\mathfrak{L}}^{\boldsymbol{\mathfrak{ij}}}=\Theta_{\mathfrak{L}}^{\boldsymbol{\mathfrak{ij}}}(0)$,
it suffices to show that  $\Theta_{\mathfrak{L}}^{\boldsymbol{\mathfrak{ij}}}(\tau)$ belongs to $W$ for all $\tau\in\mathbb{C}$. By condition \eqref{star}, there exist
 $\mathbb{C}$-linearly independent $\boldsymbol{\mathfrak{i}}$-elementary vectors  
 $\mathfrak{m}_1,\dots, \mathfrak{m}_{M-1}$ in $\Lambda$  such that 
 $$\mathfrak{m}_1\cdot \mathfrak{L}_{\boldsymbol{\mathfrak{ij}}}=\dots=\mathfrak{m}_{M-1}\cdot\mathfrak{L}_{\boldsymbol{\mathfrak{ij}}}=0.$$
   Observe that the monomial
   $s^{\mathfrak{L}}ds_{\boldsymbol{\mathfrak i}}ds_{\boldsymbol{\mathfrak j}}$ belongs to the set 
  $\mathcal{M} (\mathfrak{L}_{\boldsymbol{\mathfrak{ij}}},{\boldsymbol{\mathfrak i}})$ as defined in \eqref{monomios} --- with $s$ instead of $x$. Then, by \eqref{item33} of  Proposition \ref{retornos}, 
the coefficient $\Theta_{\mathfrak{L}}^{\boldsymbol{\mathfrak{ij}}}(\tau)$ of $s^{\mathfrak{L}}ds_{\boldsymbol{\mathfrak i}}ds_{\boldsymbol{\mathfrak j}}$ is a linear combination of the coefficients 
$$a_{\mathfrak{L}_{\boldsymbol{\mathfrak{ij}}}}(\tilde{\omega}_\tau,\mathfrak{m}_1), 
\dots, a_{\mathfrak{L}_{\boldsymbol{\mathfrak{ij}}}}(\tilde{\omega}_\tau,\mathfrak{m}_{M-1})$$ of
 $y^{\mathfrak{L}_{\boldsymbol{\mathfrak{ij}}}}$ in the series 
 $$\rho(\tilde{\omega}_\tau,\mathfrak{m}_1 ),\dots,\rho(\tilde{\omega}_\tau,\mathfrak{m}_{M-1} ).$$ Then it suffices to show that 
 the coefficients $$a_{\mathfrak{L}_{\boldsymbol{\mathfrak{ij}}}}(\tilde{\omega}_\tau,\mathfrak{m}_1), 
\dots, a_{\mathfrak{L}_{\boldsymbol{\mathfrak{ij}}}}(\tilde{\omega}_\tau,\mathfrak{m}_{M-1})$$
 belong to $W$ for all $\tau\in\mathbb{C}$.
 We continue the argumentation with the  coefficient $a_{\mathfrak{L}_{\boldsymbol{\mathfrak{ij}}}}(\tilde{\omega}_\tau,\mathfrak{m}_1)$ ---
 the proof is exactly the same for the other cases.
 By \eqref{item22} of Proposition \ref{retornos}, we have that $a_{\mathfrak{L}_{\boldsymbol{\mathfrak{ij}}}}(\tilde{\omega}_\tau,\mathfrak{m}_1)$
 is  a linear combination of the coefficients of the monomials in 
 $\mathcal{M} (\mathfrak{L}_{\boldsymbol{\mathfrak{ij}}},{\boldsymbol{\mathfrak i}})$. Observe that each monomial   
 $\Theta_L^{ij}(\tau)s^Lds_ids_j$ with $s^Lds_ids_j$ in $\mathcal{M} (\mathfrak{L}_{\boldsymbol{\mathfrak{ij}}},{\boldsymbol{\mathfrak i}})$
 is such that --- see \eqref{gradod}  --- $$|L|=|\mathfrak{L}_{\boldsymbol{\mathfrak{ij}}}|-2=|\mathfrak{L}|\le d.$$ Then ---
 see \eqref{tetita} ---
 the coefficients of the monomials in $\mathcal{M} (\mathfrak{L}_{\boldsymbol{\mathfrak{ij}}},{\boldsymbol{\mathfrak i}})$ have degree $\le d$ as functions of $\tau$, and therefore
 $a_{\mathfrak{L}_{\boldsymbol{\mathfrak{ij}}}}(\tilde{\omega}_\tau,\mathfrak{m}_1)$ has degree $\le d$ as a polynomial function of  $\tau$. Thus, 
 by Lemma \ref{polinomionulo} and condition \eqref{lamb2},
 it is enough to prove that $a_{\mathfrak{L}_{\boldsymbol{\mathfrak{ij}}}}(\tilde{\omega}_\tau,\mathfrak{m}_1)$ belongs to $W$ for all $\tau$
 in $\lambda^{-1}(\mathfrak{m}_1)$. Consider any $        \tau\in T$ such that  $\lambda (\tau)=\mathfrak{m}_1$. 
By  \eqref{con2} of Proposition \ref{log}, the diffeomorphism $h_\tau$ is $(d+1)$-tangent to the identity, hence the expansion series of $d\tilde{\omega}_\tau$ and $dh_\tau^*(\tilde{\omega}_\tau )=h_\tau^*(d\tilde{\omega}_\tau )$ coincides
up to order $d$. Then, by  \eqref{coment} of Proposition \ref{retornos}, we have that  the series 
$\rho(\tilde{\omega}_\tau,\mathfrak{m}_1 )$ and $\rho(h_\tau^*(\tilde{\omega}_\tau),\mathfrak{m}_1 )$ coincides up to order $d+2$. Thus, since
$|{\mathfrak{L}_{\boldsymbol{\mathfrak{ij}}}}|=|{\mathfrak{L}}| +2\le d+2$, we conclude that 
the coefficient $a_{\mathfrak{L}_{\boldsymbol{\mathfrak{ij}}}}(\tilde{\omega}_\tau,\mathfrak{m}_1)$
coincides with the coefficient  $a_{\mathfrak{L}_{\boldsymbol{\mathfrak{ij}}}}(h_\tau^*(\tilde{\omega}_\tau),\mathfrak{m}_1)$
 of the series
$$\rho(h_\tau^*(\tilde{\omega}_\tau),\mathfrak{m}_1 ),$$ which is equal to the series $\rho_\tau$ defined in \eqref{rot}. Since --- from the definition of $W$ --- we have that $\rho_\tau(s)$ belongs to $W$ for all $s$ in an open set that accumulates in the origin in $\mathbb{C}^N$, 
by Lemma \ref{coeficientes}  the coefficients of $\rho_\tau(s)$ belong to $W$, so that $a_{\mathfrak{L}_{\boldsymbol{\mathfrak{ij}}}}(\tilde{\omega}_\tau,\mathfrak{m}_1)$ belongs 
to $W$. \qed

\subsection*{Proof of Assertion \ref{aser2}}
By hypothesis, $\mathcal{L}$ contains a point $(p,\mathfrak{z})$ with  $p\in {\mathscr{U}}$ and $|\mathfrak{z}|<r/3$. 
Let $E$ be the set in the leaf $L_p$ of $\mathcal{F}|_{\Delta_1}$ through $p$ given by \eqref{con3} of Proposition \ref{log}.
It is enough to prove that $\mathcal{L}$ meets $\Delta'_q$ for all $q\in E$. Fix $q\in E$. By \eqref{con3} 
of Proposition \ref{log}
there exists a path $\gamma$ in $L_p$ connecting $p$ with $q$ such that $\ell(\gamma)<\epsilon$. From \eqref{repsilon} we obtain
$$|\mathfrak{z}|+\mathfrak{K}\ell(\gamma)<\frac{r}{3}+\mathfrak{K}\epsilon<\frac{r}{2}. $$Therefore,
it follows from Lemma 
\ref{levantamiento} that the lifting $\tilde{\gamma}$ of $\gamma$ to $\mathcal{D}$ starting at $(p,\mathfrak{z})$ is contained in 
$$\left\{(x,z)\in \Delta\colon |z|<\frac{r}{2}\right\}.$$ Then the ending point of $\tilde{\gamma}$ belongs to the intersection of $\mathcal{L}$
with $\Delta'_q$. The proof of Theorem \ref{teorema2} is complete. \qed 
\begin{rem}\label{remark2} In the proof of Theorem \ref{teorema2}, given $\Delta=\Delta_1\times\Delta_2$, where $\Delta_1$ and $\Delta_2$ are balls centered at the origins of $\mathbb{C}^M$ and $\mathbb{C}^N$, such that $\overline{\Delta}$ is 
contained in the domains of definition of $\mathcal{D}$ and $H_\mathcal{D}$, we have constructed the set $\Delta^*$ containing a set $\Delta''$ of the form
$$\Delta''={\mathscr{U}}\times\left\{z\in\mathbb{C}^N\colon |z|<\frac{r}{3}\right\},$$ where we have the following facts.
\begin{enumerate}
\item The number $r$ is the radius of the ball $\Delta_2$.
\item The set  ${\mathscr{U}}\subset \Delta_1$ is given by  \eqref{con3}  of Proposition \ref{log}; it depends on the foliation $\mathcal{F}$, the ball $\Delta_1$ and the election --- see \eqref{repsilon} --- of a positive number $\epsilon>0$ such that $6\mathfrak{K}\epsilon<r$ 
--- recall that $\mathfrak{K}$ is a bound for $|\omega|$ on $\Delta_1$.  
\end{enumerate}
Suppose that $\Delta_1$ is fixed. Since $\omega$ depends only on 
$(x_1,\dots,x_M)\in\mathbb{C}^M$, we can take $\Delta_2$ having any radius $r>0$. If we suppose $r>6\mathfrak{K}$, we can choose $\epsilon=1$ and  therefore the set ${\mathscr{U}}$ can be considered fixed and independent of $r$. 
This fact will be used in the proof of Theorem \ref{teorema3}.

\end{rem}
\subsection*{Proof of Theorem \ref{teorema1}} Let $F$ be a germ of meromorphic first integral of $\mathcal{D}$ in $(\mathbb{C}^{M+N},0)$.
 Let $Z$ be as given by Theorem \ref{teorema2}.
Take a neighborhood $\Delta$ of the origin in $\mathbb{C}^{M+N}$ where $F$ and $Z$ are defined and let $\Delta^*\subset\Delta$ be the corresponding set as given by Theorem \ref{teorema2}. Let $p\in \Delta\backslash S$ outside the indeterminacy set of $F$ and let $F(p)=c$. 
Since $F$ is a meromorphic first integral, $F$ takes the value $c$ along the leaf $\mathcal{L}$ of $Z|_\Delta$ through $p$. Then, since Theorem \ref{teorema2}
guarantees that $\left({H_\mathcal{D}}|_{\Delta^*}\right)^{-1}(c)\subset \overline{\mathcal{L}}$,  we have that $F$ is constant on 
$$\left({H_\mathcal{D}}|_{\Delta^*}\right)^{-1}(c).$$ Since the point $p$ can be arbitrarily chosen on an open set accumulating the origin of 
$\mathbb{C}^{M+N}$, we conclude that $F$ --- whenever it is defined --- is constant along the levels of 
${H_\mathcal{D}}|_{\Delta^*}$. 
Since $$H_\mathcal{D}\colon (\mathbb{C}^{M+N},0)\to (\mathbb{C}^\kappa,0)$$ is a submersion,
there exists a neighborhood $U$ of the origin in $\mathbb{C}^\kappa$ and a holomorphic embedding
$$g\colon U\to \Delta^*$$ such that $H_\mathcal{D}\circ g=\id$. Consider the meromorphic function
\begin{align*} f:=F\circ g \colon U\to \mathbb{C}\end{align*} We must show that $F=f\circ H_\mathcal{D}$. 
Take $\zeta\in\Delta^*$ such that $H_\mathcal{D}(\zeta)\in U$. 
Then \begin{equation} f\circ H_\mathcal{D}(\zeta)=F(g\circ H_\mathcal{D}(\zeta)).\label{forno}\end{equation} 
Since $$H_\mathcal{D}(g\circ H_\mathcal{D}(\zeta))=H_\mathcal{D}\circ g (H_\mathcal{D}(\zeta))=H_\mathcal{D}(\zeta),$$
the points $g\circ H_\mathcal{D}(\zeta)$ and $\zeta$ --- both belonging to $\Delta^*$ --- are contained in the same level of $H_\mathcal{D}$,  so 
$F$ takes the same value at these two points. Therefore
$$F(g\circ H_\mathcal{D}(\zeta))=F(\zeta)$$ and by \eqref{forno} we obtain 
$$F(\zeta)=f\circ H_\mathcal{D}(\zeta).$$ The converse assertion of Theorem \ref{teorema1} is trivial, so the proof is complete.\qed

\subsection*{Proof of Theorem \ref{teorema3}}
Item \eqref{item111}  follows directly from  \eqref{detalle1} of Proposition \ref{detalles}, so we only prove Item \eqref{item222}.
 Suppose that
$W_{\mathcal{D}}=\mathbb{C}^N$.  From the proof of Theorem \ref{teorema2}, the holomorphic foliation defined by $Z$ is the lifting
 $\mathcal{F}^{\mathcal{D}}$ of a holomorphic foliation $\mathcal{F}$, which is given by  Proposition \ref{log}. 
Let $\mathcal{L}$ be a leaf of $\mathcal{F}^{\mathcal{D}}$  through a point
  $$(\mathfrak{x},\mathfrak{z})\in\mathbb{C}^{M+N},\; \mathfrak{x}\notin  S\cup(\omega)_\infty$$
  and let $U\subset\mathbb{C}^{M+N}$ be any open set. We must show that $\mathcal{L}$ meets $U$. Then, if we take
    $$(\mathfrak{x}',\mathfrak{z}')\in U,\; \mathfrak{x}'\notin  S\cup (\omega)_\infty,$$
  it is enough to show that $\overline{\mathcal{L}}$ contains the leaf $\mathcal{L}'$ of $\mathcal{F}^{\mathcal{D}}$ through the point
  $(\mathfrak{x}',\mathfrak{z}')$. Let $\Delta_1$ be a ball centered at the origin of $\mathbb{C}^M$ and disjoint of the pole set $(\omega)_\infty$. Let $\mathscr{U}\subset \Delta_1$ be the set as given by  \eqref{con3}  of Proposition \ref{log} for $\Delta=\Delta_1$ and $\epsilon=1$.  By Proposition \ref{log}, the leaf $L$ of $\mathcal{F}$ through the point $\mathfrak{x}$ is dense in
 $\mathbb{C}^M$. Then we can take a path $\gamma$ in $L$ connecting $\mathfrak{x}$ with a point $p\in \mathscr{U}$. Since $L$ meets $\Delta_1$ and $\Delta_1$ is disjoint of $(\omega)_\infty$, the leaf $L$ is  not contained in $(\omega)_\infty$. Then,
 since $\dim L=1$,  the points of $(\omega)_\infty$
 that belong to $L$ conform a discrete set in the intrinsic topology of $L$, hence we can assume that the path $\gamma$
 is disjoint of $(\omega)_\infty$. Thus, the lifting of ${\gamma}$ to $\mathcal{D}$ starting at $(\mathfrak{x},\mathfrak{z})$  is well defined and its ending point
 belongs to the set $$\Sigma_p\colon=\{p\}\times\mathbb{C}^N.$$ We have proved that $\mathcal{L}$ contains some point
  $\zeta\in\Sigma_p$  for some $p\in \mathscr{U}$. Exactly the same arguments show that $\mathcal{L}'$ contains some point
   $\zeta'\in\Sigma_{p'}$
  for some $p'\in \mathscr{U}$. As in the proof of Theorem \ref{teorema2}, let $\mathfrak{K}$ be a bound for $|\omega|$ on $\Delta_1$. Take $r>6\mathfrak{K}$  such that $\zeta$ and $\zeta'$ are contained in the set
  $$\Delta''=\mathscr{U}\times\{z\in\mathbb{C}^N\colon |z|<\frac{r}{3}\}. $$ Set
  $$\Delta=\Delta_1\times\{z\in\mathbb{C}^N\colon |z|<r\} $$ 
   and let $\Delta^*\subset \Delta$ be as given by Theorem \ref{teorema2}. 
  By remark \ref{remark2} we have $\Delta''\subset\Delta^*$. Then,
   since $W_{\mathcal{D}}=\mathbb{C}^N$,  the leaf of $\mathcal{F}^\mathcal{D}|_{\Delta^*}$ passing through $\zeta$ is dense in $\Delta^*$. It follows that  $\mathcal{L}$ is dense in $\Delta''$, so   
   $\overline{\mathcal{L}}$ contains $\zeta'\in\Delta''$ and therefore $\overline{\mathcal{L}}$ contains ${\mathcal{L}'}$.\qed
 \subsection*{Proof of Corollary \ref{isotrop}}
Since 
$\omega$ is non-integrable, then the  series $$d\omega=\sum c_K^{ij}dx_idx_j$$ is not null. Therefore the linear subspace of $\mathbb{C}$ generated by the coefficients of $d\omega$ is the total space $\mathbb{C}$, that is, $W_\mathcal{D}=\mathbb{C}$. Then the result follows directly from Theorem \ref{teorema3}. \qed

 \subsection*{Proof of Theorem \ref{legendrian}}
Let $Z$,  $X$ and $S$ be as given by Theorem \ref{teorema2} and Proposition \ref{detalles} for the distribution
 \begin{equation}
dz=ydx-xdy
\end{equation}
on $\mathbb{C}^3$. By Theorem \ref{teorema3}, the leaves of $Z$ 
outside $$\tilde{S}\colon=\{(x,y,z)\colon (x,y)\in S\}$$ are dense in $\mathbb{C}^3$.  
The vector field $Z$ is the lifting of a polynomial vector field
$$X=A(x,y)\frac{\partial}{\partial x}+B(x,y)\frac{\partial}{\partial y}$$ on $\mathbb{C}^2$. 
Recall that, in the coordinates $(x_1,y_1,\dots,x_m,y_m, z)\in\mathbb{C}^{2m+1}$,  the distribution $\xi$ is defined by the equation
  \begin{equation}
dz=(y_1dx_1-x_1dy_1)+\dots+(y_mdx_m-x_mdy_m).
\end{equation}For each $j=1,\cdots,m$ define  the vector field $X_j$ on
$\mathbb{C}^{2m}$  by the expression
$$X_j=A(x_j,y_j)\frac{\partial}{\partial x_j}+B(x_j,y_j)\frac{\partial}{\partial y_j}.$$
Let $Z_j$ be the lifting to $\xi$ of the vector field $X_j$, that is,
$$Z_j=A(x_j,y_j)\frac{\partial}{\partial x_j}+B(x_j,y_j)\frac{\partial}{\partial y_j}+\Big[y_jA(x_j,y_j)-x_jB(x_j,y_j)\Big]\frac{\partial}{\partial z}.$$
It is easy to see that the vector fields $Z_j$ are pairwise commutative and generically linearly independent. Then the $Z_j$ define an $m$-dimensional singular holomorphic foliation $\mathfrak{F}_m$ on $\mathbb{C}^{2m+1}$, which is  clearly Legendrian for $\xi$. 
From  \eqref{detalle3} of Proposition \ref{detalles},  there is a polynomial $P\in\mathbb{C}[x,y]$ such that 
$$\tilde{S}=\{(x,y,z)\colon P(x,y)=0\}.$$ 
Define the hypersurface $S_m\subset \mathbb{C}^{2m+1}$ by the equation
$$P(x_1,y_1)\cdots P(x_m,y_m)=0. $$ Observe that $S_m$  is invariant by the vector fields $Z_j$, so that $S_m$ is invariant by $\mathfrak{F}_m$. We will prove that the leaves of $\mathfrak{F}_m$ outside $S_m$
are dense in $\mathbb{C}^{2m+1}$. 
We prove this property by induction on $m$. The property is clearly true for $m=1$; in this case $\mathfrak{F}_1$ is equivalent to the 
foliation defined by the vector field $Z$.
Suppose that the property holds for some $m\ge 1$. Let $\mathfrak{F}_m'$ be the foliation on the space $$(x_1,y_1,\dots,x_{m+1},y_{m+1},z)$$
 generated by the vector fields 
$$Z_j=A(x_j,y_j)\frac{\partial}{\partial x_j}+B(x_j,y_j)\frac{\partial}{\partial y_j}+\Big[y_jA(x_j,y_j)-x_jB(x_j,y_j)\Big]\frac{\partial}{\partial z},$$
for $j=1,\dots,m$ --- we are considering the $Z_j$ as vector fields on $\mathbb{C}^{2m+3}$. Since these vector fields does not depend on the variables $(x_{m+1},y_{m+1})$, they leave invariant each codimension $2$ plane 
$$\Sigma(a,b)\colon x_{m+1}=a,\;y_{m+1}=b,$$  where $(a,b)\in\mathbb{C}^2$. Moreover, the foliation generated by the vector fields $Z_1,\dots,Z_m$
on each $\Sigma(a,b)$ is equivalent to the foliation $\mathfrak{F}_m$ via the map 
$$(x_1,y_1,\dots,x_m,y_m,a,b,z)\mapsto (x_1,y_1,\dots,x_m,y_m,z).$$ Let $\mathcal{L}$ be a leaf of $\mathfrak{F}_{m+1}$
outside $S_{m+1}$. Since $\mathfrak{F}_{m+1}$
is generated by the vector fields $Z_1,\dots,Z_m$ --- which generate  $\mathfrak{F}_m'$ --- together with the vector field $Z_{m+1}$, we have that the intersection
of $\mathcal{L}$ with a plane $\Sigma (a,b)$ --- if this intersection is not empty --- is an invariant set of $\mathfrak{F}_m$. The property 
$\mathcal{L}\cap S_{m+1}=\emptyset$ means that the points in $\mathcal{L}$ do not satisfy the equation
\begin{equation}\label{feo}P(x_1,y_1)\cdots P(x_{m+1},y_{m+1})=0.\end{equation}
Then the points in $\mathcal{L}$ do not satisfy the equation $$P(x_1,y_1)\cdots P(x_{m},y_m)=0$$ and, in particular,
the points in $\mathcal{L}\cap \Sigma(a,b)$ do not belong to $S_{m}$. Therefore, we can apply the inductive hypothesis to any
leaf of $\mathfrak{F}_m$ contained in $\mathcal{L}\cap \Sigma(a,b)$ to conclude that 
  $\mathcal{L}\cap \Sigma (a,b)$ is dense in $\Sigma (a,b)$. Then, to guarantee the density of $\mathcal{L}$ it suffices to show that
 $\mathcal{L}$ meets  $\Sigma (a,b)$ for a dense set of points $(a,b)$ in $\mathbb{C}^2$, that is, we must prove that 
 the image of $\mathcal{L}$ by the projection
 $$(x_1,y_1,\dots,x_m,y_m,x_{m+1},y_{m+1},z)\mapsto (x_{m+1},y_{m+1})$$ is dense in $\mathbb{C}^2$. Fix a point
$p\in\mathcal{L}$. This point belongs to some three-dimensional plane 
$$\Sigma\colon x_1=a_1,y_1=b_1,\dots,x_m=a_m,y_m=b_m,$$
for some $a_1,b_1,\dots,a_m,b_m\in\mathbb{C}$.  The vector field
 $Z_{m+1}$ leave the plane $\Sigma$ invariant and the restriction ${Z_{m+1}}|_{\Sigma}$ is equivalent to the vector field $Z$ on 
 $\mathbb{C}^3$
 via the map 
 $$(a_1,b_1,\dots,a_m,b_m,x,y,z)\mapsto (x,y,z).$$ Then the leaf $\mathfrak{L}$ of ${Z_{m+1}}|_{\Sigma}$ through $p$ can be considered as a leaf
 of $Z$. Since $p$ does not satisfy Equation \ref{feo}, we have that $p$ does not satisfy
 $$P(x_{m+1},y_{m+1})=0.$$ This means that, viewed as a point in $\mathbb{C}^3$,   the point $p$ is not contained in the set $\tilde{S}$. Therefore ---
 by the density property of $Z$ --- the leaf $\mathfrak{L}$ is dense in $\mathbb{C}^3$, so that image of $\mathfrak{L}$ by the projection 
 $$(x,y,z)\mapsto (x,y)$$ is dense in $\mathbb{C}^2$. Since $\mathfrak{L}$ is contained in $\mathcal{L}$, we conclude that
 the image of $\mathcal{L}$ by the projection
 $$(x_1,y_1,\dots,x_m,y_m,x_{m+1},y_{m+1},z)\mapsto (x_{m+1},y_{m+1})$$ is dense in $\mathbb{C}^2$. \qed

\section{Final remarks}

Consider  the distribution $\mathcal{D}$ defined by \eqref{mainsystem} and let $$H_{\mathcal{D}}: (\mathbb{C}^{N+M},0)\rightarrow (\mathbb{C}^{\kappa},0)$$ be the associated holomorphic submersion
 defined in \eqref{defh}. Denote by $\mathcal{F}(H_{\mathcal{D}})$ the regular foliation defined by the levels of $H_{\mathcal{D}}$. The following statement is a reinterpretation of Theorem \ref{teorema1}.

\begin{cor}
The field of meromorphic first integrals of $\mathcal{D}$ and that of $\mathcal{F}(H_{\mathcal{D}})$ are the same.
\end{cor}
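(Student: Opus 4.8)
The plan is to read off the corollary from Theorem \ref{teorema1} by identifying the meromorphic first integrals of the regular foliation $\mathcal{F}(H_{\mathcal{D}})$ — whose leaves are the connected components of the fibers of $H_{\mathcal{D}}$ — with the meromorphic functions that factor through $H_{\mathcal{D}}$. The first step is to recall that, since $H_{\mathcal{D}}\colon(\mathbb{C}^{M+N},0)\to(\mathbb{C}^\kappa,0)$ is a holomorphic submersion, there are local holomorphic coordinates $(u,v)$ on $(\mathbb{C}^{M+N},0)$, with $u=(u_1,\dots,u_\kappa)$ and $v=(v_1,\dots,v_{M+N-\kappa})$, in which $H_{\mathcal{D}}$ becomes the linear projection $(u,v)\mapsto u$. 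In these coordinates the leaves of $\mathcal{F}(H_{\mathcal{D}})$ are exactly the slices $\{u=\mathrm{const}\}$, and a germ of meromorphic function $F$ is a first integral of $\mathcal{F}(H_{\mathcal{D}})$ precisely when $dF\wedge du_1\wedge\dots\wedge du_\kappa=0$, equivalently $\partial F/\partial v_i\equiv 0$ for $i=1,\dots,M+N-\kappa$ off the indeterminacy set of $F$.

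The second step is the descent statement: $F$ is a meromorphic first integral of $\mathcal{F}(H_{\mathcal{D}})$ if and only if $F=f\circ H_{\mathcal{D}}$ for some germ of meromorphic function $f$ on $(\mathbb{C}^\kappa,0)$. The implication from right to left is immediate. For the converse, write $F=P/Q$ with $P,Q$ coprime germs of holomorphic functions in the coordinates $(u,v)$; after a generic linear change in the $v$--variables we may assume $v=0$ is not contained in the pole locus, and we set $f(u):=F(u,0)$, a well-defined germ of meromorphic function on $(\mathbb{C}^\kappa,0)$. Since $F$ is constant along each slice $\{u=\mathrm{const}\}$ off the poles, $F$ and $f\circ H_{\mathcal{D}}$ agree on a dense open set, hence as meromorphic germs $F=f\circ H_{\mathcal{D}}$. (Equivalently, one argues that $P$ and $Q$ are each pulled back from $(\mathbb{C}^\kappa,0)$, using that $P\mid\partial_{v_i}P$ and $Q\mid\partial_{v_i}Q$ forces $\partial_{v_i}P=\partial_{v_i}Q=0$ by a degree-in-$v_i$ comparison.) This is simply the classical fact that pullback induces an isomorphism of fields of meromorphic germs along a submersion with connected fibers, applied to $H_{\mathcal{D}}$.

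Finally, combining this characterization with Theorem \ref{teorema1} — which asserts that $F$ is a meromorphic first integral of $\mathcal{D}$ if and only if $F=f\circ H_{\mathcal{D}}$ with $f$ a meromorphic germ on $(\mathbb{C}^\kappa,0)$ — we get that the meromorphic first integrals of $\mathcal{D}$ and those of $\mathcal{F}(H_{\mathcal{D}})$ form the same field, which is exactly the assertion of the corollary. The only point requiring care is the factorization of a \emph{meromorphic} (as opposed to holomorphic) first integral of the fibration through $H_{\mathcal{D}}$; for a holomorphic function this is the standard descent along a submersion, and the meromorphic case reduces to it by treating a reduced representative $P/Q$ numerator and denominator separately. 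Everything else is a direct quotation of Theorem \ref{teorema1}.
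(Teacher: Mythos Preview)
Your proposal is correct and follows the same approach as the paper, which simply declares the corollary to be ``a reinterpretation of Theorem~\ref{teorema1}'' and gives no further argument. You have merely made explicit the elementary descent step---that meromorphic first integrals of the fibration $\mathcal{F}(H_{\mathcal{D}})$ are exactly the pullbacks $f\circ H_{\mathcal{D}}$---which the paper leaves implicit; your main argument via $f(u):=F(u,0)$ is clean, though the parenthetical alternative about $P\mid\partial_{v_i}P$ is unnecessary and a bit loose.
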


\begin{rem}\label{totally-non-integrable}
If $H_{\mathcal{D}}$ is constant, that is, if $\mathcal{D}$ has only constant first integrals, then the distribution is completely non-integrable. In fact, consider the vector fields $X_1, \dots, X_M$, obtained by lifting to $\mathcal{D}$  the canonical basis on $\mathbb{C}^M$. Set $Y_{ij}\colon=[X_i, X_j]$, $i<j$. A simple computation shows that
$$
Y_{ij}=\Big(\underbrace{0}_{\in \mathbb{C}^M}, \sum_K{c_{K}^{ij}}x^K\Big). 
$$
In particular, we have 
\begin{enumerate}
\item $[Y_{ij}, Y_{kl}]=0$ for all $i<j$, $k<l$;
\item $[X_l,Y_{ij}]= \left(0, \partial_l(\sum_K{c_{K}^{ij}}x^K)\right)$.
\end{enumerate}
 For $i,j$ and $K_0=(k_1,\dots,k_M)$ fixed, we consider the vector field $X^{ij}_{K_0}$ obtained from $Y_{ij}$ by successive applications of the Lie bracket in the
following
 form:
\begin{align*}
X^{ij}_{K_0}=\underbrace{[X_M, \dots,[X_M }_{k_M},&\underbrace{[X_{M-1},\dots,[X_{M-1}}_{k_{M-1}},\dots  , \underbrace{[X_1,\dots, [X_1}_{k_1},Y_{ij}]\cdots].
\end{align*}
It follows from item $(2)$ that  $$X^{ij}_{K_0}=\Big(0, \partial_{K_0}\Big(\sum_K{c_{K}^{ij}}x^K\Big)\Big).$$
On the other hand we have
$$
\frac{1}{K_0 !}X^{ij}_{K_0}(0) = (0, c_{K_0}^{ij}).
$$
Therefore, since  the $c_{K}^{ij}$ span all $\mathbb{C}^N$ we conclude our claim.
\end{rem}

\begin{rem}
Let $\mathcal{D}$ be a distribution on $\mathbb{P}^{M+N}$ and suppose that in some affine chart $(x_1, \dots, x_M, z_1, \dots, z_N)$, the distribution is defined by a system 
\begin{equation*}
dz_i=\omega_i, \hspace{0.4cm} i=1, \dots, N, 
\end{equation*}
where each  $\omega_i$ is a rational $1$-form in the variables $(x_1, \dots, x_M)$. Then, it follows from the construction of $H_{\mathcal{D}}$ that its coordinates are Liouvillian functions, so the foliation $\mathcal{F}(H_{\mathcal{D}})$ is global  on $\mathbb{P}^{M+N}$ and defined by Liouvillian first integrals --- see \cite{Singer}. 
\end{rem}

The problem of describing the Kernel (algebra of regular first integrals) of a regular distribution has been studied by many autors, see for example \cite{Bonnet}, \cite{Nagata}, \cite{Nowicki} and the references therein. The main result of \cite{Bonnet} shows that for a distribution defined by a family of holomorphic vector fields $\{X_i\}$ there is a holomorphic vector field tangent to the distribution with the same algebra of regular first integrals. Clearly, the field of meromorphic first integral can be extremely large, however, as a consequence of Theorem \ref{teorema2} we can obtain a similar result when the distribution has separated variables.

\begin{cor}
Let $\mathcal{D}$ be the distribution on $(\mathbb{C}^{M+N},0)$ defined by \eqref{mainsystem}. Then there exists a vector field tangent to $\mathcal{D}$ which has the same field of meromorphic first integrals as $\mathcal{D}$.
\end{cor}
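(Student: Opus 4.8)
The plan is to take as the required vector field the Legendrian vector field $Z$ furnished by Theorem \ref{teorema2}, and to prove that $Z$ and $\mathcal{D}$ have literally the same field of meromorphic first integrals. One inclusion is immediate. If $F$ is a germ of meromorphic first integral of $\mathcal{D}$, then, away from the indeterminacy locus of $F$, the differential $dF$ annihilates $\mathcal{D}$; since $Z$ is tangent to $\mathcal{D}$ we get $dF\cdot Z=0$, that is, $F$ is constant along the leaves of $Z$. Hence $F$ is a meromorphic first integral of $Z$.

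For the reverse inclusion I would simply re-read the proof of Theorem \ref{teorema1}. That argument begins with a meromorphic first integral $F$ of $\mathcal{D}$, but the only property of $F$ actually used is that $F$ is constant along the leaves of $Z|_{\Delta}$: invoking the density statement of Theorem \ref{teorema2} — each leaf of $Z|_{\Delta^*}$ is dense in the level of $H_{\mathcal{D}}|_{\Delta^*}$ containing it — one deduces that $F$ is constant on the levels of $H_{\mathcal{D}}|_{\Delta^*}$, and then, using that $H_{\mathcal{D}}$ is a submersion and arguing away from the indeterminacy set of $F$ exactly as in that proof, that $F=f\circ H_{\mathcal{D}}$ for some germ of meromorphic function $f$ on $(\mathbb{C}^{\kappa},0)$. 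Thus every meromorphic first integral of $Z$ has the form $f\circ H_{\mathcal{D}}$, and by the trivial converse direction of Theorem \ref{teorema1} every such function is a meromorphic first integral of $\mathcal{D}$. Combining the two inclusions gives the equality of the two fields, which is the assertion of the corollary. The degenerate case $W_{\mathcal{D}}=\mathbb{C}^N$ is covered as well: there $H_{\mathcal{D}}$ is constant, $\mathcal{D}$ has only constant meromorphic first integrals by Theorem \ref{teorema1}, and so does $Z$ because its leaves are dense in $\Delta^*$.

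I expect no genuine obstacle here: the content is entirely contained in Theorems \ref{teorema1} and \ref{teorema2}, and the proof is essentially a repackaging of the proof of Theorem \ref{teorema1} with "$F$ a first integral of $\mathcal{D}$" weakened to "$F$ a first integral of $Z$''. The only point deserving a line of care is, as usual, the meromorphic rather than holomorphic nature of $F$, so that one works on the complement of its indeterminacy set and appeals to the identity principle — but this is handled verbatim as in the proof of Theorem \ref{teorema1}.
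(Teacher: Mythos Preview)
Your proposal is correct and is exactly the argument the paper has in mind: the corollary is stated without proof, as a direct consequence of Theorem~\ref{teorema2}, and your observation that the proof of Theorem~\ref{teorema1} only uses constancy of $F$ along the leaves of $Z$ (rather than along all of $\mathcal{D}$) is precisely the point. There is nothing to add.
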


As can be expected, the separation of variables --- that is, the existence of coordinates that reduces the distribution to the form \eqref{mainsystem} --- is not always possible, as we see in the following example.

\begin{example}
Let $\mathcal{D}$ be the distribution on $(\mathbb{C}^4,0)$ generated by the vector fields 
$$
X_1=\partial_z, \hspace{0.5cm} X_2 = x\partial_x + y\partial_y + (1+z) \partial_w,
$$
which is a regular rank $2$ non-integrable distribution. It is easy to check that $\mathcal{D}$ 
is tangent to the codimension one foliation $\mathcal{F}: xdy - ydx =0$, that is,   the function $y/x$
 is a rational first integral for $\mathcal{D}$. This means that the field of meromorphic first integrals near every point $p\in \mathbb{C}^4$ is not reduced to the constants. Thus, if the separation of variables 
can be achieved in $(\mathbb{C}^4,p)$, it follows from Remark \ref{merosub} that $\mathcal{D}$
is tangent to a regular codimension one holomorphic foliation $\tilde{\mathcal{F}}$ in 
$(\mathbb{C}^4,p)$. If $\tilde{\mathcal{F}}$ were different from $\mathcal{F}$, the intersection of 
these foliations would contain $\mathcal{D}$ as a sub-distribution, whence $\mathcal{D}$ would be integrable. Then  $\tilde{\mathcal{F}}= \mathcal{F}$ in  $(\mathbb{C}^4,p)$ and therefore
$\mathcal{F}$ is regular at $p$. We conclude that  the separation of variables in  $(\mathbb{C}^4,p)$
is impossible
 if $p \in \sing(\mathcal{F})$.\\
 On the other hand, if $p \notin \sing(\mathcal{F})$, the separation of variables in  $(\mathbb{C}^4,p)$
is actually possible.   For instance, near $p=(1,0,0,0)$ the distribution is given by the system
$$
dy = y \frac{dx}{x}, \quad dw =(1+z)\frac{dx}{x}.
$$ 
Then, after the change $x\leftarrow e^x$ the system is equivalent to 
$$
dy = y dx, \quad dw =(1+z)dx,
$$
which, by the new change $y\leftarrow ye^x$, is taken to 
$$
dy = 0, \quad dw =(1+z)dx.
$$
\end{example}

\noindent\textbf{Question:} Let $\mathcal{D}$ be a (singular) holomorphic  distribution on a complex manifold $M$. Is it always possible to separate variables at a generic point?.

\section{Proof of Proposition \ref{log}}\label{seccion3}

This section is devoted to the proof of Proposition \ref{log}. The proof is quite technical and will be organized with the aid of some lemmas that will be proved  along the way: lemmas \ref{nuca}, \ref{assertion1}, \ref{densote}, \ref{assertion2},  
 \ref{pila1}, \ref{pila2}, \ref{pila3} and \ref{pila4}. Before proceeding with the proof of this proposition, we need three previous lemmas.

 \begin{lem}\label{zar2} Given $m\in\mathbb{N}$, if we set $n=2m+1$ we have that,  for $(\mu_{ij})$ chosen in a dense set of the space of $m\times n$ complex matrices, the vectors
\begin{equation}\nonumber 
\mu_j=\left(e^{2\pi \sqrt{-1}\mu_{1j}}, e^{2\pi \sqrt{-1}\mu_{2j}},\dots,e^{2\pi \sqrt{-1}\mu_{mj}} \right), \quad j=1,\dots,n
\end{equation} generate  a  dense subgroup  in the multiplicative group $(\mathbb{C}^*)^{m}$.
 \end{lem}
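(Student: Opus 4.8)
The plan is to turn this into a genericity question about finitely generated subgroups of the abelian group $(\mathbb{C}^*)^m$ and to show that the ``bad'' matrices form a meager set, so that their complement is dense. First I would use the isomorphism of topological groups $(\mathbb{C}^*)^m\cong\mathbb{C}^m/\mathbb{Z}^m$ given by $(z_1,\dots,z_m)\mapsto(e^{2\pi\sqrt{-1}z_1},\dots,e^{2\pi\sqrt{-1}z_m})$, under which $\mathbb{Z}^m$ is the standard lattice inside $\mathbb{R}^m\subset\mathbb{C}^m=\mathbb{R}^{2m}$ and the vector $\mu_j$ corresponds to the class $\bar v_j$ of the $j$-th column $v_j=(\mu_{1j},\dots,\mu_{mj})$. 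Thus $\mu_1,\dots,\mu_n$ generate a dense subgroup of $(\mathbb{C}^*)^m$ precisely when $\langle\bar v_1,\dots,\bar v_n\rangle$ is dense in $\mathbb{R}^{2m}/\mathbb{Z}^m$.

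Next I would invoke Pontryagin duality: a subgroup of $\mathbb{R}^{2m}/\mathbb{Z}^m$ is dense if and only if its annihilator in the dual group is trivial. The dual of $\mathbb{R}^{2m}/\mathbb{Z}^m$ is the annihilator of $\mathbb{Z}^m$ in $\widehat{\mathbb{R}^{2m}}=\mathbb{R}^{2m}$, namely $\mathbb{Z}^m\times\mathbb{R}^m$ (integral ``real part'', free ``imaginary part''). Writing $\mu_{\ell j}=\alpha_{\ell j}+\sqrt{-1}\,\beta_{\ell j}$, a character $(k,s)\in\mathbb{Z}^m\times\mathbb{R}^m$ kills every $\bar v_j$ iff
\[
\textstyle\sum_{\ell=1}^m\bigl(k_\ell\alpha_{\ell j}+s_\ell\beta_{\ell j}\bigr)\in\mathbb{Z},\qquad j=1,\dots,n .
\]
Hence the set $\mathcal B$ of bad matrices is $\bigcup_{k\in\mathbb{Z}^m}\mathcal B_k$, where $\mathcal B_k$ consists of those $(\alpha,\beta)$ for which these relations hold for some $s\in\mathbb{R}^m$ with $(k,s)\neq(0,0)$; splitting further according to the integral value $z\in\mathbb{Z}^n$ of the left-hand sides gives $\mathcal B_k=\bigcup_{z\in\mathbb{Z}^n}\mathcal B_{k,z}$, and, writing $A=(\alpha_{\ell j})$ and $B=(\beta_{\ell j})$, one checks $\mathcal B_{k,z}\subset\{A^{\!\top}k-z\in\operatorname{im}(B^{\!\top})\}\cup\{\operatorname{rank}B<m\}$ (the last set being needed only to handle $k=z=0$, where one must have $s\neq0$).

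The crucial point is that each $\mathcal B_{k,z}$ is nowhere dense, and this is where $n=2m+1>m$ is used. The image of $B^{\!\top}\colon\mathbb{R}^m\to\mathbb{R}^n$ is a subspace of dimension $\le m<n$, hence always proper; moreover $\alpha$ and $\beta$ move independently, and for $k\neq0$ the vector $A^{\!\top}k$ runs over all of $\mathbb{R}^n$ as $\alpha$ varies (vary one row of $\alpha$ whose $k$-coefficient is nonzero). So one can first choose $\beta$ with $\operatorname{rank}B=m$ and then choose $\alpha$ with $A^{\!\top}k-z\notin\operatorname{im}(B^{\!\top})$, which shows $\mathcal B_{k,z}$ fills no open set. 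To make this precise I would enlarge $\mathcal B_{k,z}$ to the genuinely closed real-algebraic set cut out by the vanishing of all $(m+1)\times(m+1)$ minors of $[\,B^{\!\top}\mid A^{\!\top}k-z\,]$ together with $\{\operatorname{rank}B<m\}$: this is a proper algebraic subset of the matrix space, hence closed and nowhere dense, and it contains $\mathcal B_{k,z}$. Therefore $\mathcal B$ is a countable union of nowhere dense sets, so by the Baire category theorem its complement — the matrices for which $\mu_1,\dots,\mu_n$ generate a dense subgroup — is residual, in particular dense.

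The main obstacle is precisely this last dimension count and its careful execution: the assignment $B\mapsto\operatorname{im}(B^{\!\top})$ is not continuous (the image can drop in dimension in the limit), so the locus $\{A^{\!\top}k-z\in\operatorname{im}(B^{\!\top})\}$ need not itself be closed, which is why one passes to the minor variety above; the strict inequality $n>m$ is exactly what makes that variety proper and hence nowhere dense. The remaining ingredients — the identification with $\mathbb{C}^m/\mathbb{Z}^m$, the duality computation of the annihilator, and the Baire assembly — are routine.
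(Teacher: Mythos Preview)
Your argument is correct and takes a genuinely different route from the paper. The paper proceeds constructively: after reducing to the question of when the $\mathbb{Z}$-span of the columns $v_1,\dots,v_n\in\mathbb{C}^m$ is dense in $\mathbb{C}^m\simeq\mathbb{R}^{2m}$, it simply chooses $v_1,\dots,v_{2m}$ to be any real basis of $\mathbb{C}^m$ and then sets $v_{2m+1}=x_1v_1+\dots+x_{2m}v_{2m}$ with $1,x_1,\dots,x_{2m}$ rationally independent; a Kronecker-type argument then gives density of $\mathbb{Z}v_1+\dots+\mathbb{Z}v_{2m+1}$, and since both conditions are dense on the respective parameters this exhibits a dense set of good matrices directly. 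Your approach instead characterises the bad set via Pontryagin duality and shows it is meager, which yields the stronger conclusion that the good set is residual and is more systematic, at the price of invoking more machinery than the paper's two-line construction.

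One small point to tighten: your check that the minor variety is proper only explicitly treats the case $k\neq 0$. When $k=0$ and $z\neq 0$ the vector $A^{\!\top}k-z=-z$ is fixed and independent of $\alpha$, so varying $\alpha$ does nothing; instead choose $\beta$ with $\operatorname{rank}B=m$ and with $z\notin\operatorname{im}(B^{\!\top})$, which is generic since $\operatorname{im}(B^{\!\top})$ has dimension $m<n$. And the case $(k,z)=(0,0)$ must really be treated separately as $\{\operatorname{rank}B<m\}$, because the minor variety of $[B^{\!\top}\mid 0]$ is the whole matrix space and gives no information. With these two easy adjustments the Baire argument goes through cleanly.
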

 \proof
 If $k_1,\dots,k_n\in\mathbb{Z}$,  we have 
 \begin{align}\nonumber
 \mu_1^{k_1}\mu_2^{k_2}\dots\mu_n^{k_n}=\left(e^{2\pi \sqrt{-1}\sum\limits_{j=1}^n\mu_{1j}k_j}, e^{2\pi \sqrt{-1}\sum\limits_{j=1}^n\mu_{2j}k_j},\dots, e^{2\pi \sqrt{-1}\sum\limits_{j=1}^n\mu_{mj}k_j} \right).
 \end{align}
 Then $\mu_1,\dots,\mu_n$ generate a dense subgroup of $(\mathbb{C}^*)^{m}$ if the set 
 \begin{align}\nonumber
 \Bigg\{\bigg(\sum\limits_{j=1}^n\mu_{1j}k_j, \dots, \sum\limits_{j=1}^n\mu_{mj}k_j \bigg)
 \colon k_1,\dots,k_n\in\mathbb{Z}\Bigg\}
 =\mathbb{Z}\mu_1+\dots +\mathbb{Z}\mu_n
 \end{align} is dense in $\mathbb{C}^m$. Take any $\mu_1,\dots,\mu_{2m}$ generating $\mathbb{C}^m$
 as a real linear space. It is well known that, if we set
 $$\mu_{m+1}\colon =x_1\mu_1+\dots +x_{2m}\mu_{2m}$$
for real numbers $x_1,\dots, x_{2m}$ such that  $1,x_1,\dots, x_{2m}$ are linearly independent over $\mathbb{Z}$,  then

 $$\mathbb{Z}\mu_1+\dots +\mathbb{Z}\mu_{2m+1} $$ is dense in $\mathbb{C}^m$. This proves the lemma. \qed

\begin{lem}\label{zar1.1} 
Let $m\in\mathbb{N}$, let $\Omega\subset\mathbb{C}$ be an open set,  and let $$f_1,\dots,f_m\colon\Omega\to\mathbb{C}$$ be  holomorphic functions. Suppose that the set of functions $\{f_1,\dots,f_m\}$ have no linear relation.  
Then the function  
\begin{equation*}(x_1,\dots,x_m)\mapsto
 \det\begin{bmatrix} f_1(x_1)& f_1(x_2)&\dots &f_1(x_m)\\
  f_2(x_1)& f_2(x_2)&\dots &f_2(x_m)\\
  \vdots& \vdots && \vdots \\
   f_m(x_1)& f_m(x_2)&\dots &f_m(x_m)\\
 
 \end{bmatrix}.
 \end{equation*}
 does not vanish identically.
\end{lem}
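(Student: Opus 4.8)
The plan is to argue by induction on $m$, peeling off one variable at a time via Laplace expansion. For $m=1$ the determinant is just the function $x_1\mapsto f_1(x_1)$, and the hypothesis that $\{f_1\}$ has no linear relation says exactly that $f_1\not\equiv 0$, so the base case is immediate. For the inductive step I would assume the statement for families of $m-1$ holomorphic functions with no linear relation.

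Write $D(x_1,\dots,x_m)$ for the determinant in the statement. Expanding it along the last column --- whose entries are $f_1(x_m),\dots,f_m(x_m)$ --- gives
\begin{equation*}
D(x_1,\dots,x_m)=\sum_{i=1}^m(-1)^{i+m}\,f_i(x_m)\,M_i(x_1,\dots,x_{m-1}),
\end{equation*}
where $M_i(x_1,\dots,x_{m-1})=\det[\,f_k(x_j)\,]$ with $k$ running over $\{1,\dots,m\}\setminus\{i\}$ and $j$ over $\{1,\dots,m-1\}$; note that $M_m$ is precisely the determinant attached to the family $f_1,\dots,f_{m-1}$. Now suppose $D\equiv 0$ on $\Omega^m$. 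For each fixed $(x_1,\dots,x_{m-1})\in\Omega^{m-1}$, the numbers $c_i:=(-1)^{i+m}M_i(x_1,\dots,x_{m-1})$ satisfy $\sum_{i=1}^m c_i f_i(x_m)=0$ for all $x_m\in\Omega$; since $\Omega$ is a nonempty open set in $\mathbb{C}$ this means $\sum_{i=1}^m c_i f_i\equiv 0$, so the absence of a linear relation among $f_1,\dots,f_m$ forces all $c_i=0$. In particular $M_m\equiv 0$ on $\Omega^{m-1}$. But $f_1,\dots,f_{m-1}$, being a subfamily of a family with no linear relation, has no linear relation either, so the inductive hypothesis says $M_m\not\equiv 0$ --- a contradiction. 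Hence $D\not\equiv 0$, which closes the induction.

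I expect no serious obstacle here; the argument is essentially bookkeeping. The two small points to keep an eye on are that a subfamily of a linearly independent family of functions is again linearly independent (so the inductive hypothesis genuinely applies to $f_1,\dots,f_{m-1}$), and that a holomorphic function on the open set $\Omega$ vanishing on all of $\Omega$ is identically zero --- this is what upgrades the pointwise identity $\sum c_i f_i(x_m)=0$ to the functional identity $\sum c_i f_i\equiv 0$. If one were willing to assume $\Omega$ connected, an alternative would be to invoke the classical fact that the Wronskian of $\mathbb{C}$-linearly independent holomorphic functions is not identically zero and relate it to $D$, but the direct induction above requires no connectedness and is self-contained.
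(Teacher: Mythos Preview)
Your proof is correct but takes a different route from the paper's. The paper argues directly with no induction: setting $f=(f_1,\dots,f_m)\colon\Omega\to\mathbb{C}^m$, it notes that if no choice of $x_1,\dots,x_m\in\Omega$ makes the columns $f(x_1),\dots,f(x_m)$ linearly independent, then $W:=\operatorname{span}\{f(x):x\in\Omega\}$ is a proper subspace of $\mathbb{C}^m$, and any nonzero linear functional annihilating $W$ yields a linear relation among $f_1,\dots,f_m$. Your induction via Laplace expansion is an equally valid alternative; it trades the one-line span argument for a mechanical recursion, and in the process makes transparent that holomorphicity plays no role whatsoever (the paper's argument does not use it either, but this is less visible there). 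One cosmetic remark: the step ``since $\Omega$ is a nonempty open set in $\mathbb{C}$ this means $\sum c_i f_i\equiv 0$'' is a tautology --- vanishing for every $x_m\in\Omega$ \emph{is} vanishing identically on $\Omega$ --- so neither openness nor analyticity is needed there, and your closing caveat about that point can be dropped.
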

\proof
Set $$f\colon =(f_1,\dots,f_m)\colon \Omega\to \mathbb{C}^m.$$ 
It is enough to show that there exist complex numbers $x_1,\dots,x_{m}\in\Omega$ such
 that the vectors $f(x_1),\dots,f(x_{m})$ are linearly independent. If it is not the case, the set
 $$W\colon=\spn\{f(x)\colon x\in\Omega\}$$ is a proper subspace of 
 $\mathbb{C}^{m}$ and we will have a linear relation for $f_1, \dots, f_m$. \qed

 The following Lemma give us a particular set of function satisfying the hypotheses of Lemma \ref{zar1.1}.

\begin{lem}\label{zar1.2} 
If $k,m\in\mathbb{N}$ and $b_1,b_2\dots,b_m\in\mathbb{C}$ are pairwise distinct,    the set  of $km+1$ functions 
\begin{align*}&\hspace{5mm}\Bigg\{1,  \frac{1}{(x-b_1)},\frac{1}{(x-b_1)^2},\dots,\frac{1}{(x-b_1)^k}, \\
&\hspace{3cm}\frac{1}{(x-b_2)}, \frac{1}{(x-b_2)^2 },\dots,\frac{1}{(x-b_2)^k},\\
&\hspace{7.5cm}\vdots \\
&\hspace{5.9cm} \frac{1}{(x-b_m)},\frac{1}{(x-b_m)^2},\dots,\frac{1}{(x-b_m)^k}\Bigg\}
\end{align*} have no linear relation.
\end{lem}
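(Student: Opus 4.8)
The plan is to prove the statement by appealing to the uniqueness of partial fraction decompositions, which I would establish directly through a pole-order argument. Suppose, for contradiction, that there is a nontrivial linear relation
\[
c_0 + \sum_{j=1}^{m}\sum_{i=1}^{k} \frac{c_{ij}}{(x-b_j)^{i}} \equiv 0 \quad \text{on } \mathbb{C}\setminus\{b_1,\dots,b_m\},
\]
with complex constants $c_0$ and $c_{ij}$ not all zero. I want to force every coefficient to vanish.

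First I would observe that if all the $c_{ij}$ were zero the relation would read $c_0\equiv 0$, hence $c_0=0$, contradicting nontriviality; so some $c_{ij}\neq 0$. Fix an index $j_0$ such that $c_{ij_0}\neq 0$ for some $i$, and set $p=\max\{\, i : c_{ij_0}\neq 0 \,\}\ge 1$. Since the points $b_1,\dots,b_m$ are pairwise distinct, the constant $c_0$ and each term $c_{ij}/(x-b_j)^i$ with $j\neq j_0$ are holomorphic in a neighbourhood of $b_{j_0}$; let $h$ denote their sum, holomorphic at $b_{j_0}$. The relation then becomes
\[
\sum_{i=1}^{p}\frac{c_{ij_0}}{(x-b_{j_0})^{i}} = -\,h(x)
\]
near $b_{j_0}$. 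Multiplying by $(x-b_{j_0})^{p}$ gives
\[
c_{pj_0} + c_{(p-1)j_0}(x-b_{j_0}) + \cdots + c_{1j_0}(x-b_{j_0})^{p-1} = -\,h(x)\,(x-b_{j_0})^{p},
\]
and letting $x\to b_{j_0}$ the right-hand side tends to $0$ while the left-hand side tends to $c_{pj_0}$; hence $c_{pj_0}=0$, contradicting the choice of $p$. (Equivalently: the left-hand side has a pole of order exactly $p\ge 1$ at $b_{j_0}$, whereas $-h$ is holomorphic there.) Therefore all $c_{ij}=0$, and then $c_0=0$ as well, so no nontrivial relation exists.

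The only point requiring care is the bookkeeping in the localization step --- checking that every term attached to a pole $b_j$ with $j\neq j_0$, together with the constant term, really is holomorphic near $b_{j_0}$ --- but this is immediate from $b_j\neq b_{j_0}$, so I do not expect any genuine obstacle. This lemma, combined with Lemma \ref{zar1.1}, then yields the nonvanishing of the determinant built from these functions, which is what is needed in the proof of Proposition \ref{log}.
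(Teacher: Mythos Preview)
Your argument is correct and follows essentially the same approach as the paper: both prove linear independence by observing that a nontrivial relation would force the function to have a pole at some $b_{j_0}$, contradicting the fact that it is identically zero. Your version simply spells out the pole-order bookkeeping that the paper leaves implicit.
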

\proof
Suppose that there exist complex numbers $$c, c_{ij}, \quad i=1,\dots, m, j=1,\dots, k$$ 
such that the function  
 $$f(x)=c+\sum\limits_{i,j} \frac{c_{ij}}{(x-b_i)^j}$$ 
 vanishes. If $c_{ij}$ were nonzero for some $i,j$, the function $f$ would have a pole at $b_i$, which is not possible. Then the $c_{ij}$ are all
 zero and therefore  $c$ is also zero.\qed

Let
\begin{equation}\label{taus}
T=\{b_1,b_2,\dots,b_m\},
\end{equation} where $m\in\mathbb{N}$ and the complex numbers  $b_1,\dots,b_m$ are pairwise distinct. 
By Lemma \ref{zar2}, for some $n\in\mathbb{N}$ we can find numbers 
\begin{equation}\label{multiplicative}
\mu_{ij}\in\mathbb{C}, \quad i=1,\dots, M-1, \quad j=1,\dots,n 
\end{equation} with the following properties.
\begin{itemize}
\item For each $j=1,\dots,n$,  we have \begin{align}\begin{aligned} \label{multiplicative0}{\im(\mu_{1j})},
\im (\mu_{1j}+\mu_{2j}),\dots, \im (\mu_{1j}+\mu_{(M-1)j})>0.\end{aligned}\end{align}
\item The vectors
\begin{equation}\label{multiplicative1} 
\mu_j=\left(e^{2\pi \sqrt{-1}\mu_{1j}}, e^{2\pi \sqrt{-1}\mu_{2j}},\dots,e^{2\pi \sqrt{-1}\mu_{(M-1)j}} \right), \quad j=1,\dots,n
\end{equation} generate  a  dense subgroup  in the multiplicative group $(\mathbb{C}^*)^{M-1}$.
\end{itemize}

Consider the natural number $N=(k+1)m+n+1$ and take complex numbers $b_{m+1},b_{m+2},\dots,b_{N}$, such that $b_1,b_2\dots,b_N$ are pairwise distinct. The choice of $b_{m+1},b_{m+2},\dots,b_{N}$ satisfies some generic property  that will be specified later. As mentioned in \eqref{con1} of Proposition \ref{log}, for a suitable choice of  numbers \begin{center} $\nu_{ij}\in\mathbb{C}$, $i=1,\dots,(M-1)$, $j=1,\dots,N$, \end{center} the holomorphic foliation $\mathcal{G}$ will be defined by a rational vector field of the form

\begin{equation}\label{systemsol}
Y\colon=
\sum\limits_{i=1}^{M-1}A_i(t_M)t_i\frac{\partial}{\partial t_i}
+\frac{\partial}{\partial t_M},
\end{equation}
where \begin{equation}\label{systemsol1}
A_i(t_M)=
\sum\limits_{j=1}^N \frac{\nu_{ij}}{t_M-b_{j}}, \quad i=1,\dots M-1.
\end{equation}
This foliation has its singularities
at the points $$p_j\colon=(0,\dots,0,b_j)\in\mathbb{C}^M,\; j=1,\dots, N,$$ which are linearizable, as we explicitly show. 
Let $l\in\{1,\dots,N\}$ and consider coordinates 
\begin{equation}\label{coords}(s_1,s_2,\dots,s_M)=(t_1,t_2,\dots,t_{M-1},t_M-b_l)\end{equation} at the point $p_l$. In these coordinates the vector field $Y$ is explicitly given by

\begin{align*}
Y_l\colon=
\sum\limits_{i=1}^{M-1}\left(\sum\limits_{j=1}^N \frac{\nu_{ij}}{s_M-(b_{j}-b_l)}\right)s_i\frac{\partial}{\partial s_i}+\frac{\partial}{\partial s_M}.
\end{align*} Then, if we set  $$g_{i l}(s_M)\colon= \sum\limits_{{j=1,j\neq l}}^N \frac{\nu_{i j}}{s_M-(b_{j}-b_l)},\quad  i=1,\dots,(M-1),$$
 we have
\begin{align*}
Y_l=\sum\limits_{i=1}^{M-1}
\left(\frac{\nu_{il}}{s_M}+g_{il}(s_M)\right)s_i\frac{\partial}{\partial s_i}+\frac{\partial}{\partial s_M}.
\end{align*} Since the $g_{il}$ are holomorphic near the origin, there exist functions  $h_{i l}$ holomorphic on a neighborhood of the origin,  such that $h_{i l}'=g_{i l}$
and $h_{i l} (0)=0$ for $i=1,\dots,M-1$.  
 Define \begin{equation}\label{defih}h_l(s_1,\dots,s_M)=(s_1e^{h_{1l}(s_M)},\dots, s_{M-1}e^{h_{(M-1)l}(s_M)}, s_M).\end{equation} A straightforward computation shows that 
\begin{equation}\label{linealizacion}h_l^{*}(Y_l)=\frac{1}{s_M}\left(\sum\limits_{i=1}^{M-1}\nu_{il}s_i\frac{\partial}{\partial s_i}+s_M\frac{\partial}{\partial s_M}\right).\end{equation}
That is, the foliation  at $p_l$ is linearizable. Moreover, from the definition of $h_l$, we see that a sufficient condition for $h_l$ to be
 $k$-tangent to the identity is that the functions $g_{i l}$ have orders at least  $k-1$ at the origin. 
The function $g_{i l}$ can be expanded in the form
 \begin{align*}
 g_{i l}(s_M)&=-\sum\limits_{{j=1,j\neq l}}^N \frac{\nu_{i j}}{b_j-b_l}\cdot\frac{1}{1-\frac{s_M}{b_j-b_l}} \\
 &=-\sum\limits_{{j=1,j\neq l}}^N \frac{\nu_{i j}}{b_j-b_l}\left(1+\frac{s_M}{b_j-b_l} +\left(\frac{s_M}{b_j-b_l}\right)^2+\cdots\right),
 \end{align*}
 whence the coefficients up to order $k-1$ of $g_{i l}$ are given by 
 \begin{equation}\label{ces}
 \begin{aligned}
 c_{i l0}&=& -\sum\limits_{{j=1,j\neq l}}^N \frac{\nu_{i j}}{b_j-b_l}\\
 c_{i l1}&=& -\sum\limits_{{j=1,j\neq l}}^N \frac{\nu_{i j}}{(b_j-b_l)^2}\\
 &\hspace{2mm}\vdots&\\
c_{i l(k-1)}&=& -\sum\limits_{{j=1,j\neq l}}^N \frac{\nu_{i j}}{(b_j-b_l)^k}.
 \end{aligned}
 \end{equation}
Therefore, a sufficient condition for $h_l$ to be $k$-tangent to the identity for $l=1,\dots,m$  is to have the equalities 
\begin{align*} c_{il\alpha}=0, \quad \textrm{ for }\;\;i=1,\dots, M-1,\;  l=1,\dots,m, \; \alpha=0,\dots,k-1.
\end{align*} For each 
$i=1,\dots, {M-1}$, it will be convenient to define $F_i \colon \mathbb{C}^N \to \mathbb{C}^{km}$,
\begin{align*} 
F_i &= (c_{i10},c_{i11},\dots,c_{i 1(k-1)},c_{i 20},
\dots,c_{i 2(k-1)}, \dots ,
c_{i m0},\dots,c_{i m(k-1)}),
\end{align*} as a function of the variable $\nu_i\colon =(\nu_{i 1},\dots,\nu_{i N})$. The function $F_i$ is linear and --- in view of \eqref{ces} --- it is represented by the $km\times N$ matrix
 \begin{equation}\label{matriz}
   -\begin{bmatrix}0& \frac{1}{b_2-b_1}&\frac{1}{b_3-b_1}&\cdots & \frac{1}{b_m-b_1}&\cdots & \frac{1}{b_N-b_1}\\
0& \frac{1}{(b_2-b_1)^2}&\frac{1}{(b_3-b_1)^2}&\cdots & \frac{1}{(b_m-b_1)^2}&\cdots & -\frac{1}{(b_N-b_1)^2}\\
\vdots &\vdots & \vdots& &\vdots &&\vdots &\\
0& \frac{1}{(b_2-b_1)^k}&\frac{1}{(b_3-b_1)^k}&\cdots & \frac{1}{(b_m-b_1)^k}&\cdots & \frac{1}{(b_N-b_1)^k}\\
 \frac{1}{b_1-b_2}&0&\frac{1}{b_3-b_2}&\cdots & \frac{1}{b_m-b_2}&\cdots& \frac{1}{b_N-b_2}\\
 \frac{1}{(b_1-b_2)^2}&0&\frac{1}{(b_3-b_2)^2}&\cdots & \frac{1}{(b_m-b_2)^2}&\cdots & \frac{1}{(b_N-b_2)^2}\\
\vdots &\vdots & \vdots& &\vdots &&\vdots\\
\frac{1}{(b_1-b_2)^k}&0&\frac{1}{(b_3-b_2)^k}&\cdots & \frac{1}{(b_m-b_2)^k}&\cdots& \frac{1}{(b_N-b_2)^k}\\
&&\\
\vdots &\vdots & \vdots& &\vdots&&\vdots\\
&&\\
 \frac{1}{b_1-b_m}&\frac{1}{b_2-b_m}&\frac{1}{b_3-b_m}&\cdots&0 &\cdots & \frac{1}{b_N-b_m}\\
 \frac{1}{(b_1-b_m)^2}&\frac{1}{(b_2-b_m)^2}&\frac{1}{(b_3-b_m)^2} & \cdots&0&\cdots&\frac{1}{(b_N-b_m)^2}\\
\vdots &\vdots & \vdots& &\vdots&&\vdots\\
\frac{1}{(b_1-b_m)^k}&\frac{1}{(b_2-b_m)^k}&\frac{1}{(b_3-b_m)^k} & \cdots&0&\cdots&\frac{1}{(b_N-b_m)^k}\\

\end{bmatrix}.
\end{equation}
Since this matrix does not depend on $i=1,\dots, M-1$, we will denote $F_i=F$ for all $i=1,\dots, M-1$. 
We consider any  $km\times km$ submatrix of the matrix in \eqref{matriz} disjoint of the first $m$ columns: we take for example 
the submatrix 
 
  \begin{equation}
J=\begin{bmatrix} \frac{1}{b_{m+1}-b_1}&\frac{1}{b_{m+2}-b_1}&\cdots & \frac{1}{b_{m+mk}-b_1}\\
\frac{1}{(b_{m+1}-b_1)^2}&\frac{1}{(b_{m+2}-b_1)^2}&\cdots & \frac{1}{(b_{m+mk}-b_1)^2}\\
\vdots &\vdots & &\vdots\\
\frac{1}{(b_{m+1}-b_1)^k}&\frac{1}{(b_{m+2}-b_1)^k}&\cdots & \frac{1}{(b_{m+mk}-b_1)^k}\\

 \frac{1}{b_{m+1}-b_2}&\frac{1}{b_{m+2}-b_2}&\cdots & \frac{1}{b_{m+mk}-b_2}\\
 \frac{1}{(b_{m+1}-b_2)^2}&\frac{1}{(b_{m+2}-b_2)^2}&\cdots & \frac{1}{(b_{m+mk}-b_2)^2}\\
\vdots &\vdots & &\vdots\\
\frac{1}{(b_{m+1}-b_2)^k}&\frac{1}{(b_{m+2}-b_2)^k}&\cdots & \frac{1}{(b_{m+mk}-b_2)^k}\\

&&\\
\vdots &\vdots &&\vdots\\
&&\\

 \frac{1}{b_{m+1}-b_m}&\frac{1}{b_{m+2}-b_m} &\cdots & \frac{1}{b_{m+mk}-b_m}\\
 \frac{1}{(b_{m+1}-b_m)^2}&\frac{1}{(b_{m+2}-b_m)^2} & \cdots&\frac{1}{(b_{m+mk}-b_m)^2}\\
\vdots &\vdots && \vdots\\
\frac{1}{(b_{m+1}-b_m)^k}&\frac{1}{(b_{m+2}-b_m)^k}&\cdots&\frac{1}{(b_{m+mk}-b_m)^k}\\

\end{bmatrix}.
\end{equation}
Recall that $b_1,\dots,b_m$ are previously fixed pairwise distinct constants --- see \eqref{taus}. As we have said above, 
  the constants
$b_{m+1},b_{m+2},\dots,b_{N}$ have to be chosen in a particular generic way; we explain now this choice.  By Lemmas \ref{zar1.1} 
and \ref{zar1.2}
we have that,   as a function of $(b_{m+1},b_{m+2},\dots,b_{N})$, the determinant $\det (J)$ does not vanish.
In the same way, this property holds for any $km\times km$ submatrix of \eqref{matriz} disjoint of the first $m$ columns. Thus,
we conclude that the set of $$(b_{m+1},b_{m+2},\dots,b_{N})\in\mathbb{C}^{N-m}$$ such that all the already mentioned $km\times  km$ submatrices of \eqref{matriz} have nonzero determinant is a Zariski open set. Denote by $\mathcal{U}$ this Zariski open set. On the other hand,
we use the last $km+1$ columns of \eqref{matriz} to construct the following $(km+1)\times(km+1)$ matrix:

  \begin{equation}\label{matriz1}
J_1=\begin{bmatrix} 1&1&\cdots&1\\
 \frac{1}{b_{N-km}-b_1}&\frac{1}{b_{N-km+1}-b_1}&\cdots & \frac{1}{b_{N}-b_1}\\
\vdots &\vdots & &\vdots\\
\frac{1}{(b_{N-km}-b_1)^k}&\frac{1}{(b_{N-km+1}-b_1)^k}&\cdots & \frac{1}{(b_{N}-b_1)^k}\\

 \frac{1}{b_{N-km}-b_2}&\frac{1}{b_{N-km+1}-b_2}&\cdots & \frac{1}{b_{N}-b_2}\\

\vdots &\vdots & &\vdots\\
\frac{1}{(b_{N-km}-b_2)^k}&\frac{1}{(b_{N-km+1}-b_2)^k}&\cdots & \frac{1}{(b_{N}-b_2)^k}\\

&&\\
\vdots &\vdots &&\vdots\\
&&\\

 \frac{1}{b_{N-km}-b_m}&\frac{1}{b_{N-km+1}-b_m}&\cdots & \frac{1}{b_{N}-b_m}\\

\vdots &\vdots & &\vdots\\
\frac{1}{(b_{N-km}-b_m)^k}&\frac{1}{(b_{N-km+1}-b_m)^k}&\cdots & \frac{1}{(b_{N}-b_m)^k}\\

\end{bmatrix}.
\end{equation}
 Again by Lemmas \ref{zar1.1} 
and \ref{zar1.2}
we have that,   as a function of $(b_{m+1},b_{m+2},\dots,b_{N}),$ the determinant $\det (J_1)$ does not vanish. Then the
set of $$(b_{m+1},b_{m+2},\dots,b_{N})\in\mathbb{C}^{N-m}$$ such that $\det (J_1)\neq 0$ is a Zariski open set $\mathcal{U}_1$. Finally, from now on
the vector 
$(b_{m+1},b_{m+2},\dots,b_{N})$ will be supposed fixed in the Zariski open set  $\mathcal{U}\cap\mathcal{U}_1$. So that
\begin{itemize}
\item  any  $km\times km$ submatrix of \eqref{matriz} disjoint  of the first $m$ columns has nonzero determinant, and
\item the matrix $J_1$ has nonzero determinant.
 \end{itemize}

\begin{lem} \label{nuca}For each $i=1,\dots,M-1$, there exists  $$\nu_i=(\nu_{i1},\dots,\nu_{iN})\in \mathbb{C}^N$$ such that the following properties hold.
\begin{enumerate}
\item\label{nu1} $\nu_{ij}\neq 0$, for all $i=1,\dots, M-1$, $j=1,\dots,N$.
\item\label{nu2}$F(\nu_1)=\dots=F(\nu_{M-1})=0$.
\item\label{valores} $(\nu_{i1},\dots,\nu_{i(m+n)})=\left(\frac{\lambda_i (b_1)}{\lambda_M(b_1)},\dots,\frac{\lambda_i (b_m)}{\lambda_M(b_m)},
\mu_{i1}, \dots, \mu_{in}\right)$,   where the $\mu_{ij}$ are as  defined in
\eqref{multiplicative}. 
\item\label{nu4}
 For each $j=(m+n+1),\dots, N$, there exists a real line through the origin in $\mathbb{C}$ separating
$1\in\mathbb{C}$ from the set $$\{\nu_{1j},\nu_{1j}+\nu_{2j},\dots, \nu_{1j}+\nu_{(M-1)j}\}.$$
\item\label{nu5} The numbers  \begin{align*}
\tilde{\nu}_1\colon &=-1-\sum\limits_{j=1}^N \nu_{1j},\\
\tilde{\nu}_2\colon &=1-\sum\limits_{j=1}^N \nu_{2j},\\
&\hspace{2.3mm}\vdots\\
\tilde{\nu}_{M-1}\colon &=1-\sum\limits_{j=1}^N \nu_{(M-1)j},\\
\end{align*} are all nonzero and 
there exists a real line through the origin in $\mathbb{C}$ separating
$1\in\mathbb{C}$ from the set $$\{\tilde{\nu}_{1},\tilde{\nu}_{1}+\tilde{\nu}_{2},\dots, \tilde{\nu}_{1}+\tilde{\nu}_{(M-1)}\}.$$

\end{enumerate}
\end{lem}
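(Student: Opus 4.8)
The plan is to treat the five conditions as constraints on the $(M-1)$-tuple $(\nu_1,\dots,\nu_{M-1})$ and solve them in two stages: first the linear conditions \eqref{nu2} and \eqref{valores}, which cut out an $(M-1)$-parameter family, and then use the remaining $M-1$ complex parameters to force the open conditions \eqref{nu1}, \eqref{nu4} and \eqref{nu5}. First I would fix notation: write $F$ for the $km\times N$ matrix \eqref{matriz} (the same for every $i$), and let $M_1$ be the $km\times(km+1)$ submatrix of $F$ consisting of the columns with indices $m+n+1,\dots,N$, so that $M_1$ is $J_1$ with its top row of ones deleted (note that $N-(m+n)=km+1$). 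Since $\det J_1\ne 0$, the $km$ rows of $M_1$ are independent, so $M_1$ and $F$ both have rank $km$; hence $\ker F$ has dimension $N-km=m+n+1$, and the projection $\ker F\to\mathbb{C}^{m+n}$ onto the first $m+n$ coordinates is surjective with one-dimensional kernel $\mathbb{C}\,e_0$, where $e_0=(0,\dots,0,v)$ and $v=(v_{m+n+1},\dots,v_N)$ spans $\ker M_1$. Thus prescribing the first $m+n$ coordinates as in \eqref{valores} and imposing $F(\nu_i)=0$ forces $\nu_i=\nu_i^\sharp+t_i\,e_0$ for a fixed particular solution $\nu_i^\sharp$ and a free parameter $t_i\in\mathbb{C}$.

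Two non-vanishing facts drive everything, and both come from the genericity of $b_{m+1},\dots,b_N$, that is, from $(b_{m+1},\dots,b_N)\in\mathcal{U}\cap\mathcal{U}_1$. By Cramer's rule $v_j=\pm\det\bigl(M_1\text{ with column }j\text{ removed}\bigr)$; deleting a column of $M_1$ leaves a $km\times km$ submatrix of \eqref{matriz} disjoint from the first $m$ columns, which is invertible because the $b$'s lie in $\mathcal{U}$, so $v_j\ne 0$ for all $j$. Expanding $\det J_1$ along its row of ones gives $\det J_1=\pm V$ with $V:=\sum_{j=m+n+1}^{N}v_j$, so $V\ne 0$ as well. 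Consequently, for $j>m+n$ the coordinate $\nu_{ij}=\nu_{ij}^\sharp+t_iv_j$ is a non-constant affine function of $t_i$, and $\tilde\nu_i=\epsilon_i-\sum_j\nu_{ij}$ (with $\epsilon_1=-1$ and $\epsilon_i=1$ for $i\ge 2$) is an affine function of $t_i$ with nonzero slope $-V$; also the coordinates prescribed in \eqref{valores} are nonzero, since $\lambda$ takes values in $(\mathbb{C}^*)^M$ and the $\mu_{ij}$ may be taken nonzero. It then remains to choose the parameters. Writing $v_j=|v_j|e^{\sqrt{-1}\beta_j}$ and $V=|V|e^{\sqrt{-1}\beta}$, I would pick an argument $\phi\notin\{0\}\cup\{-\beta_j\bmod 2\pi:j>m+n\}\cup\{\pi-\beta\bmod 2\pi\}$; then choose $t_2,\dots,t_{M-1}$ of small modulus, avoiding the finitely many values at which some $\nu_{rj}$ $(r\ge 2)$ or some $\tilde\nu_r$ $(r\ge 2)$ vanishes; and finally take $t_1=Re^{\sqrt{-1}\phi}$ with $R$ large. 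For each fixed $j>m+n$ the $M-1$ numbers $\nu_{1j}$ and $\nu_{1j}+\nu_{rj}$, $2\le r\le M-1$, then all tend to infinity in the direction $\phi+\beta_j\ne 0$ as $R\to\infty$, so for $R$ large they lie in a narrow cone about a nonzero direction and are therefore separated from $1$ by a real line through the origin, which is \eqref{nu4}; likewise $\tilde\nu_1$ and the $\tilde\nu_1+\tilde\nu_r$ all tend to infinity in the direction $\pi+\phi+\beta\ne 0$, giving \eqref{nu5}, and for $R$ large $\tilde\nu_1\ne 0$ and $\nu_{1j}\ne 0$ for $j>m+n$ as well. Together with the nonzero prescribed coordinates this gives \eqref{nu1}, while \eqref{nu2} and \eqref{valores} hold by construction.

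The hard part is precisely the two non-vanishing statements $v_j\ne 0$ and $V\ne 0$, which is the whole point of having arranged the Zariski-open conditions $\mathcal{U}$ and $\mathcal{U}_1$ beforehand: if some $v_j$ vanished, the solution line $\nu_i=\nu_i^\sharp+t_ie_0$ could be trapped inside a coordinate hyperplane $\{\nu_{ij}=0\}$ and \eqref{nu1} would be out of reach, and if $V$ vanished then $\tilde\nu_i$ would be independent of $t_i$ and the ``push $t_1$ to infinity along a generic ray'' device --- which is what allows the $km+1$ open conditions of \eqref{nu4} to be met simultaneously with \eqref{nu5} using only the $M-1$ parameters $t_i$ --- would collapse. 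Everything else is the routine observation that finitely many open conditions can always be satisfied by a sufficiently generic choice of the parameters $t_i$.
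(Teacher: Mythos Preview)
Your proof is correct and rests on the same two key facts the paper isolates: every coordinate of a generator $v$ of $\ker M_1$ is nonzero (from $\mathcal U$), and $V=\sum_j v_j\neq 0$ (from $\mathcal U_1$). The overall architecture---parametrize $\ker F$ as $\nu_i^\sharp+t_i e_0$ and push a parameter to infinity---is also the paper's.

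The execution differs in an interesting way. The paper uses a \emph{single real} parameter: it picks one $\mathfrak v\in\ker M_1$ with every $\operatorname{Im}\mathfrak v_j\neq 0$ and $\operatorname{Im}\mathscr S(\mathfrak v)\neq 0$ (these are exactly your $v_j\neq0$, $V\neq0$ after multiplying $v$ by a generic complex scalar), and then sets $\nu_i=(\mathfrak u_i,\mathfrak v_i+s\mathfrak v)$ with the \emph{same} real $s\gg0$ for all $i$. The point is that for large $s$ the sign of $\operatorname{Im}\nu_{ij}$ is that of $\operatorname{Im}\mathfrak v_{j-m-n}$, independently of $i$; hence for each fixed $j>m+n$ all of $\nu_{1j},\dots,\nu_{(M-1)j}$ lie in the same (upper or lower) half-plane, so the sums in \eqref{nu4} do too, and the separating line is simply the real axis. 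The argument for \eqref{nu5} is the same via $\operatorname{Im}\mathscr S(\mathfrak v)$.

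You instead allow $M-1$ complex parameters, keep $t_2,\dots,t_{M-1}$ bounded and send $t_1=Re^{i\phi}$ to infinity along a generic ray; then for each $j$ the whole set $\{\nu_{1j},\nu_{1j}+\nu_{rj}\}$ is $t_1v_j+O(1)$ and lands in a thin cone about the direction $\phi+\beta_j\neq0$, and any such cone is separated from $1$ by a line through the origin. This is a perfectly good alternative; it trades the paper's uniform ``same half-plane for all $i$'' mechanism for a dominance argument. The paper's route has the minor aesthetic advantage that a single real scalar does the whole job and the separating line is always $\mathbb R$; yours makes the role of the two Zariski conditions (the Cramer minors and $\det J_1$) slightly more transparent.
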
 
\begin{rem} \label{remarknu}By \eqref{valores} of Lemma \ref{nuca}  we have $$\nu_{i(m+1)}=\mu_{i1},\; \nu_{i(m+2)}
=\mu_{i2}, \dots,\; \nu_{i(m+n)}=\mu_{in}.$$ Then, it follows
from \eqref{multiplicative0} that, for each $j=m+1,\dots,m+n$,  there exists a real line through the origin in $\mathbb{C}$ separating
$1\in\mathbb{C}$ from the set $$\{\nu_{1j},\nu_{1j}+\nu_{2j},\dots, \nu_{1j}+\nu_{(M-1)j}\}.$$ Therefore, 
\eqref{nu4} of Lemma \ref{nuca}  will actually holds true whenever $j \notin \{1, \dots, m\}$. 
\end{rem} 
\proof Since $N=(m+n) +(km+1)$,  it will be convenient to consider $F$ as a function of 
$$(u,v)\in \mathbb{C}^{m+n}\times \mathbb{C}^{km+1}.$$ 
For each $i=1,\dots,M-1$,
we start by finding vectors  satisfying
the properties \eqref{nu2} and \eqref{valores} of the lemma. Thus we are looking for solutions of the equation
$F(u,v)=0$. 
The  property established above about the $km\times  km$ submatrices of \eqref{matriz} 
implies that, in the equation \mbox{$F(u,v)=0,$} any set of $km$ variables chosen between the $km+1$ coordinates of $v$ can be expressed as linear functions of the $m+n+1$ remaining variables; so  these  remaining variables can be arbitrarily chosen.
 Since the  $m+n+1$ remaining variables includes the $m+n$ coordinates of $u$, we can find solutions of $F(u,v)=0$ with any prescribed value of $u$. Then there exist $$(\mathfrak{u}_1,\mathfrak{v}_1),\dots, (\mathfrak{u}_{M-1},\mathfrak{v}_{m-1})\in\mathbb{C}^{m+n}\times \mathbb{C}^{N-m-n}$$
 such that
 \begin{align}
  \label{vida2}&\hspace{0.0cm}F(\mathfrak{u}_1,\mathfrak{v}_1)=\dots =F(\mathfrak{u}_{M-1},\mathfrak{v}_{m-1})=0,\quad \textrm{and}\\
 \label{vida1} &\mathfrak{u}_i=\left(\frac{\lambda_i (b_1)}{\lambda_M(b_1)},\dots,\frac{\lambda_i (b_m)}{\lambda_M(b_m)},
\mu_{i1}, \dots, \mu_{in}\right),\quad i=1,\dots, M-1.
\end{align}
Let $G\colon \mathbb{C}^{km+1}\to \mathbb{C}^{km}$ be the linear function defined by $$G(v)=F(0,v).$$ 
We write  $$v=(v_1,\dots,v_{km+1})\in \mathbb{C}^{km+1},$$ 
$$\im v=(\im v_1,\dots,\im v_{km+1})\in\mathbb{R}^{km+1}$$ 
and consider the linear function $$\mathscr{S}(v)=v_1+v_2+\dots+v_{km+1}.$$
We will find $\mathfrak{v}\in \mathbb{C}^{km+1}$ such that
 \begin{align}\label{mata}
 G(\mathfrak{v})=0, \quad \im\mathscr{S}(\mathfrak{v})\neq 0,\quad
\im (\mathfrak{v})\in (\mathbb{R}^*)^{km+1}.
\end{align}
Since the matrix of $G$ is composed of the last $km+1$ columns of the matrix \eqref{matriz},
their $km\times  km$ submatrices have nonzero determinant. Then, in the equation $G(v)=0$, any set of $km$ 
coordinates between the $km+1$ coordinates of $v$  can be expressed as linear functions of the  remaining coordinate; so  this  remaining coordinate
of $v$ can be arbitrarily chosen. In particular, given any $j=1,\dots, (km+1)$, we can find a solution of $G(v)=0$ with
$\im v_j\neq 0$; that is, \begin{align}\label{kindle1}G^{-1}(0)\not\subset \{\im v_j =0\},\; j=1,\dots, (km+1).\end{align}
On the other hand, from \eqref{matriz1} we can see that the matrix of $G$ is given by the matrix $J_1$ with its first row deleted. Thus, the fact of $J_1$
being non-singular implies that $\ker (G) \not\subset \ker \mathscr{S}.$ Therefore, since $\ker (G)$ is a complex linear space, and since the maximal complex linear space in the real space $\{\im\mathscr{S}=0\}$ is  $\ker \mathscr{S}$, we deduce that $\ker(G)$ can not be contained in
$\{\im\mathscr{S}=0\}$, that is,
$$ G^{-1}(0)\not\subset \{\im\mathscr{S}=0\}.$$
 It follows from this and \eqref{kindle1} ---
 since $G^{-1}(0)$ is irreducible --- that
   $$G^{-1}(0)\not\subset \{\im\mathscr{S}=0\}\cup\bigcup\limits_{j=1}^{km+1} \{\im v_j =0\},$$ 
   and therefore we can choose $\mathfrak{v}\in \mathbb{C}^{km+1}$ satisfying \eqref{mata}.
Since
 \begin{align}\im (\mathfrak{v}_i+s\mathfrak{v})=\im \mathfrak{v}_i+s\im \mathfrak{v},\quad s\in\mathbb{R}
\end{align} and no  component of $\im \mathfrak{v}$ is zero, we can fix $s>0$ so big that 
\begin{itemize}
\item[($\star$)] for each $i=1,\dots,M-1$ and each $j=1,\dots, (km+1)$, the $j$th component of $\im (\mathfrak{v}_i+s\mathfrak{v})\in \mathbb{R}^{km+1}$ have the same sign as the $j^{\textrm{th}}$ component of $\im \mathfrak{v}$.
\end{itemize}

Now, for each $i=1,\dots, M-1$, define $$\nu_i=(\mathfrak{u}_i, \mathfrak{v}_i+s\mathfrak{v}).$$
Clearly, from \eqref{vida1} the property \eqref{valores} of the lemma holds. Property  \eqref{valores} of the lemma together with
($\star$)  imply the property \eqref{nu1} of the lemma. 
From \eqref{vida2} and \eqref{mata} we have
\begin{align}F(\mathfrak{u}_i,\mathfrak{v}_i+s\mathfrak{v})=F(\mathfrak{u}_i,\mathfrak{v}_i)+sG(\mathfrak{v})=0,
\end{align} so the property \eqref{nu2} of Lemma \ref{nuca} also holds. 
Now, let $j\in\{m+n+1,\dots,N\}$. From ($\star$) we deduce that the numbers 
$\im \nu_{1j}, \im \nu_{2j} \dots, \im \nu_{(M-1)j}$  have the same sign. 
This means that the numbers $$\nu_{1j},\nu_{2j}, \dots,\nu_{(M-1)j}$$ are simultaneously contained  in the upper half plane or in the lower half plane of $\mathbb{C}$. Then the same holds for the numbers 
$$\nu_{1j},\nu_{1j}+\nu_{2j},\dots,\nu_{1j}+\nu_{(M-1)j},$$ which in turns implies the  property \eqref{nu4} of the lemma. 
In the same way, the property \eqref{nu5} of the lemma will be guaranteed  if the numbers 
$$\im \tilde{\nu}_1, \im \tilde{\nu}_2 \dots, \im \tilde{\nu}_{M-1}$$ have the same sign. 
Given $i=1,\dots, M-1$,  we can express
$$\im \tilde{\nu}_i=-\im \sum\limits_{j=1}^N \nu_{ij}= c_i-s\im\mathscr{S}(\mathfrak{v}),$$
for some constant $c_i\in\mathbb{R}$. Therefore, by taking $s>0$ bigger if necessary, 
$\im \tilde{\nu}_i$ has the same sign as $-\im\mathscr{S}(\mathfrak{v})$ for all $i=1,\dots,M-1$, which finishes the proof.
\qed\\

From now on we fix  the vectors $\nu_1,\dots,\nu_{M-1}\in\mathbb{C}^N$ according to \mbox{Lemma \ref{nuca}.}
 Then the foliation $\mathcal{G}$ is already defined as stated by \eqref{con1} of \mbox{Proposition \ref{log}.}  Let us start
 the proof of the remaining assertions of  \mbox{Proposition \ref{log}.}
 
  Let $\tau\in T$. Then $\tau=b_{{l_0}}$ for some ${{l_0}}$ in $\{1,\dots, m\}$. By the choice of the $\nu_i$ we have  
 $$F (\nu_i)=0, \quad i=1,\dots,M-1,$$
that is
\begin{center}
$c_{il\alpha}=0$, \hspace{2mm}
for  $i=1,\dots,M-1$,  $\;l=1,\dots,m$, $\;\alpha=0,\dots,k-1$. 
\end{center}As we have seen above, these equalities guarantee
that $h_l$ is $k$-tangent to the identity for $l=1,\dots,m$. Thus, if we define  $$h_\tau\colon=h_{{l_0}},$$
 the assertion \eqref{con2a} of Proposition \ref{log} holds.  On the other hand, 
it follows from \eqref{linealizacion} and \eqref{valores} of Lemma \ref{nuca} that

\begin{align*}h_\tau^{*}(Y_{l_0})&=\frac{1}{s_M}\left(\sum\limits_{i=1}^{M-1}\nu_{il_0}s_i\frac{\partial}{\partial s_i}+s_M\frac{\partial}{\partial s_M}\right)\\
&=\frac{1}{s_M}\left(\sum\limits_{i=1}^{M-1}\frac{\lambda_i(\tau)}{\lambda_M(\tau)}s_i\frac{\partial}{\partial s_i}+s_M\frac{\partial}{\partial s_M}\right)\\
&=\frac{1}{\lambda_M(\tau)s_M}\left(\sum\limits_{i=1}^{M}{\lambda_i(\tau)}s_i\frac{\partial}{\partial s_i}\right),
\end{align*} 
which means that the foliation defined by $Y_{l_0}$ at $0\in\mathbb{C}^M$   is the pushforward  of the foliation generated by the linear system
\begin{equation}\label{linearsystemG3}\nonumber
\begin{aligned}
 s_1'&=&\lambda_1(\tau)s_1\\
 s_2'&=&\lambda_2(\tau)s_2\\
 &\hspace{0.2cm}\vdots&\\
s_M'&=&\lambda_M(\tau)s_M.
\end{aligned}
\end{equation} Thus, since $Y_{l_0}$ is nothing but the vector field $Y$ after the change of coordinates
$$(t_1,\dots,t_M)= (s_1,\dots,s_M)+(0,\dots,0,b_{l_0}),$$ the   assertion \eqref{con2c} of Proposition \ref{log} follows.

The assertion \eqref{con2.1} of Proposition \ref{log} is easily verified, so it needs no additional details. Thus we proceed with 
 the  proof of the remaining assertions. 
 
  It is easy to see that the line 
$$\mathbf{C}\colon=\{t\in\mathbb{C}^M\colon t_1=0,\dots,t_{M-1}=0\}$$ is invariant by $\mathcal{G}$, so 
$\mathbf{C}^*=\mathbf{C}\backslash\{p_1,\dots,p_N\}$ is a leaf of $\mathcal{G}$. We are interested in the holonomy
of this leaf, which --- due to the special form of the vector field $Y$ --- has a global nature and can be easily computed, as we show next.  
 Consider the hyperplane
 $$\Sigma =\{t\in\mathbb{C}^M\colon t_M=a\},$$  where $$a\in \mathbb{C}\backslash\{b_1,\dots,b_N\}.$$  In
 view of the natural identification  $\mathbf{C}\simeq\mathbb{C}$, a curve in
 $\mathbf{C}^*$ can be thought of as curve in $\mathbb{C}\backslash \{b_1,\dots,b_N\}$. Thus,
 we consider a  smooth curve  \begin{equation}\label{gam}\gamma\colon [0,1]\to \mathbb{C}\backslash\{b_1,\dots,b_N\},\quad \gamma(0)=a.\end{equation}
 Then, given 
 $z=(z_1,\dots,z_{M-1},a)\in\Sigma$, there exists a unique curve $$\gamma^z\colon [0,1]\to \mathbb{C}^M,\quad \gamma^z(0)=z$$ with last coordinate given by $\gamma$ and that is tangent to $\mathcal{G}$: in view of  \eqref{systemsol}, this curve is explicitly given by 
 \begin{equation}\label{solu1}\gamma^z (s)= \left(z_1\exp{\int\limits_{\gamma |_{[0,s]}}A_1(u)du },
  \dots, z_{M-1}\exp{\int\limits_{\gamma |_{[0,s]}}A_{M-1}(u)du },\gamma(s) \right). \end{equation}
 Thus, the holonomy associated to $\gamma$ is the map
  \begin{align}\label{solu2}
  \mathcal{H}_\gamma\colon z\mapsto \gamma^z(1),
  \end{align} which is a linear isomorphism between $\Sigma=\{t_M=a\}$ 
  and $\{t_M=\gamma(1)\}$. 
  This map $\mathcal{H}_\gamma$ can be computed if $\gamma$ is a closed curve, because it depends only on the integrals 
$$\int\limits_{\gamma}A_i(u)du, \quad i=1,\dots, M-1.$$ 
For each $j=1,\dots, N$, choose a smooth positively oriented  closed simple curve 
  \begin{align}\label{curvon}\gamma_j\colon [0,1]\to \mathbb{C}\backslash \{b_1,\dots,b_N\},\quad \gamma(0)=\gamma(1)=a,
  \end{align}
  whose interior domain intersects $ \{b_1,\dots,b_N\}$ exactly at $b_j$.  
  Then $$\int\limits_{\gamma_j}A_i(u)du=\int\limits_{\gamma_j} \sum\limits_{\mathfrak{l}=1}^N \frac{\nu_{i\mathfrak{l}}}{w-b_{l}}dw
  =\int\limits_{\gamma_j}  \frac{\nu_{ij}}{w-b_{j}}=2\pi \sqrt{-1}\nu_{ij},$$ 
  so that 
  \begin{align}\label{holodef} \mathcal{H}_{\gamma_j}(z)=\left(z_1e^{2\pi \sqrt{-1}\nu_{1j}},\dots, z_{M-1}e^{2\pi \sqrt{-1}\nu_{(M-1)j}}, a \right),\end{align}
  which can be  identified with the linear map 
  \begin{align}z\mapsto\left(e^{2\pi \sqrt{-1}\nu_{1j}}z_1,\dots, z_{M-1}e^{2\pi \sqrt{-1}\nu_{(M-1)j}} \right) \end{align} or even with its corresponding diagonal matrix.
  Then the holonomy group of $\mathbf{C}^*$  is generated by the diagonal matrices
  $\mathcal{H}_{\gamma_1},\dots, \mathcal{H}_{\gamma_N}$ in $\gl (M-1,\mathbb{C})$ and so,  in particular,  this group is  abelian. \\
 Before  going ahead  with the rest of the proof, we  establish  some notation and conventions. 
 \begin{defn}
 We know that the blow-up
  $\pi\colon\widehat{\mathbb{C}^M}\to\mathbb{C}^M$ is a 
 biholomorphism between  $\widehat{\mathbb{C}^M}\backslash\pi^{-1}(0)$  and
  $\mathbb{C}^M\backslash\{0\}$. Thus, if no confusion arise we can identify objects in 
  $\widehat{\mathbb{C}^M}\backslash\pi^{-1}(0)$
  with their corresponding images in  $\mathbb{C}^M\backslash\{0\}$. Then,  
   $w \in \widehat{\mathbb{C}^M}\backslash\pi^{-1}(0)$ and  $W\subset \widehat{\mathbb{C}^M}\backslash\pi^{-1}(0)$
   are identified with $\pi(w)$ and $\pi (W)$. The foliation $\mathcal{G}$ on $\widehat{\mathbb{C}^M}\backslash\pi^{-1}(0)$
     is identified with the foliation $\mathcal{F}$ on $\mathbb{C}^M\backslash\{0\}$. 
     Given a point $w \in \widehat{\mathbb{C}^M}\backslash\pi^{-1}(0)$, the leaf of $\mathcal{G}$ through $w$ is 
     identified with the leaf of $\mathcal{F}$ through $\pi(w)$, so this leaf is
  simply referred to as the leaf through $w$ with no reference to the foliation. A curve $\alpha\colon[0,1]\to\widehat{\mathbb{C}^M}\backslash\pi^{-1}(0)$ is identified with the curve 
   $\pi\circ\alpha$ in $\mathbb{C}^M\backslash\{0\}$;  furthermore, we define  the length of the curve $\alpha$ 
    as the Euclidean length of $\pi\circ\alpha$, and this number will be denoted by $\ell (\alpha)$. If the curve $\alpha$ is contained
    in a leaf, we will say that $\alpha$ is an integral curve. Finally, given $w \in \widehat{\mathbb{C}^M}\backslash\pi^{-1}(0)$, we denote by $\|w\|$ the Euclidean norm of $\pi(w)$, that is, $\|w\|\colon=|\pi(w)|$.
    \end{defn}

\begin{lem}\label{assertion1} If  $\gamma$ is a curve as in \eqref{gam}, there exists  ${K}_\gamma>0$ such that the following properties hold.
\begin{enumerate}
\item If $z\in\Sigma$, $z_1\neq 0$ and $\|z\|<1$, then  \begin{equation}\nonumber \ell (\gamma^z)\le K_\gamma\|z\|.
\end{equation}
\item 
The  number $K_\gamma$ depends continuously on $\gamma$ if we consider the $C^1$-topology in the space
  of curves $\gamma$.
  \end{enumerate}
 As a direct consequence of the first assertion above, there exists a constant  $K>0$ such that
\begin{equation}\label{lon1} z\in\Sigma\backslash\pi^{-1}(0) , \|z\|<1, \;j=1,\dots, N\implies \ell (\gamma_j^z)\le K\|z\|.
\end{equation}
  \end{lem}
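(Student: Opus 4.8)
The plan is to read everything off the explicit formula \eqref{solu1} for the lifted curve. Set
\begin{equation*}
\phi_i(s)=\exp\int_{\gamma|_{[0,s]}}A_i(u)\,du,\qquad i=1,\dots,M-1,
\end{equation*}
so that in the coordinates $(t_1,\dots,t_M)$ one has $\gamma^z(s)=\big(z_1\phi_1(s),\dots,z_{M-1}\phi_{M-1}(s),\gamma(s)\big)$. The decisive observation is that, since the coefficients $\nu_{ij}$ (hence the $A_i$) are already fixed, each $\phi_i$ is a $C^1$, nowhere vanishing function on $[0,1]$ with $\phi_i(0)=1$ that depends \emph{only} on $\gamma$, not on $z$. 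Composing with $\pi(t_1,\dots,t_M)=(t_1,t_1t_2,\dots,t_1t_M)$ gives
\begin{equation*}
\pi\circ\gamma^z(s)=z_1\,\big(\phi_1(s),\ z_2\phi_1(s)\phi_2(s),\ \dots,\ z_{M-1}\phi_1(s)\phi_{M-1}(s),\ \phi_1(s)\gamma(s)\big),
\end{equation*}
a curve that never meets $0\in\mathbb{C}^M$ because $z_1\neq 0$ and $\phi_1$ does not vanish; hence by the length convention fixed just above, $\ell(\gamma^z)$ is the Euclidean length of this curve.

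First I would estimate the derivative coordinatewise:
\begin{equation*}
\big|(\pi\circ\gamma^z)'(s)\big|\le |z_1|\Big(|\phi_1'(s)|+\sum_{i=2}^{M-1}|z_i|\,|(\phi_1\phi_i)'(s)|+|(\phi_1\gamma)'(s)|\Big).
\end{equation*}
From $\pi(z)=(z_1,z_1z_2,\dots,z_1z_{M-1},z_1a)$ one reads off $\|z\|^2=|z_1|^2\big(1+|z_2|^2+\dots+|z_{M-1}|^2+|a|^2\big)$, which gives the elementary bounds $|z_1|\le\|z\|$ and $|z_1||z_i|\le\|z\|$ for every $i$. Substituting these and integrating over $[0,1]$ yields $\ell(\gamma^z)\le K_\gamma\|z\|$ with
\begin{equation*}
K_\gamma:=1+\int_0^1|\phi_1'(s)|\,ds+\sum_{i=2}^{M-1}\int_0^1|(\phi_1\phi_i)'(s)|\,ds+\int_0^1|(\phi_1\gamma)'(s)|\,ds,
\end{equation*}
a strictly positive constant depending on $\gamma$ alone; this proves the first assertion (in fact the hypothesis $\|z\|<1$ is not even needed here, only $z_1\neq 0$).

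For the second assertion I would show that $\gamma\mapsto K_\gamma$ is continuous by showing that $\gamma\mapsto\phi_i$ is continuous from the space of curves \eqref{gam} with the $C^1$-topology into $C^1([0,1])$. Indeed, if $\gamma$ stays $C^1$-close to a fixed $\gamma_0$, its image remains in a fixed compact set $\mathcal{K}\subset\mathbb{C}\setminus\{b_1,\dots,b_N\}$, on which each $A_i$ is bounded and Lipschitz; so $s\mapsto A_i(\gamma(s))\gamma'(s)$ varies continuously in sup-norm, hence so does its antiderivative, and then so do $\phi_i$ and $\phi_i'=A_i(\gamma)\gamma'\phi_i$. Since $K_\gamma$ is assembled from $\phi_1,\dots,\phi_{M-1}$ and $\gamma$ by finitely many operations that are continuous on $C^1([0,1])$ (multiplication, differentiation, and $\|\cdot\|_{L^1}$), continuity follows. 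Finally, \eqref{lon1} is immediate: the loops $\gamma_1,\dots,\gamma_N$ of \eqref{curvon} are a finite family of curves of type \eqref{gam}, so one may take $K:=\max_{1\le j\le N}K_{\gamma_j}$.

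None of this is delicate; the only point requiring care is the passage between the vector field $Y$, which lives on the blow-up chart $(t_1,\dots,t_M)$, and the Euclidean geometry of $\mathbb{C}^M$ imposed through $\pi$ — concretely, recognising that the ``shape functions'' $\phi_i$ are independent of the base point $z$ (which is exactly what makes the constant $K_\gamma$ uniform in $z$) and that the form of $\pi$ forces the bounds $|z_1|,|z_1z_i|\le\|z\|$.
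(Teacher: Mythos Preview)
Your proof is correct and follows essentially the same route as the paper: both arguments read the lifted curve off formula \eqref{solu1}, compose with $\pi$, bound $|(\pi\circ\gamma^z)'|$ coordinate by coordinate, and then use $|z_1|,|z_1z_i|\le\|z\|$ to pull out the factor $\|z\|$. Your packaging via the ``shape functions'' $\phi_i$ is a bit cleaner than the paper's constants $K_1,K_2,K_3$, and you are right that $\|z\|<1$ is not actually used; otherwise the two proofs coincide.
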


\proof Write $\gamma^z(s)=(\gamma_1(s),\dots,\gamma_{M-1}(s),\gamma)$, $s\in [0,1]$. It follows from \eqref{solu1} that 
\begin{align}\label{que1}|\gamma_i(s)|\le K_1|z_i|, \quad i=1,\dots, M-1,
\end{align}where 
$$K_1\colon=\max\left\{\Bigg|\exp {\int\limits_{\gamma|_{[0,s]}}}A_i(u)du\Bigg|\colon s\in[0,1],i=1,\dots,M-1\right\}.$$
Also from \eqref{solu1} we have 
\begin{align}\nonumber|\gamma_i'(s)|=\Big|\gamma'(s)A_i(\gamma(s))\gamma_i(s)\Big|, \quad i=1,\dots, M-1,
\end{align}which together with \eqref{que1} leads to 
\begin{align}\label{que2}|\gamma_i'(s)|\le K_2K_1|z_i|, \quad i=1,\dots, M-1,
\end{align}where
$$K_2\colon=\max\Big\{\big|\gamma'(s)A_i(\gamma(s))\big|\colon s\in[0,1],\; i=1,\dots,M-1\Big\}.$$
Observe that 
\begin{align}
|(\pi\circ\gamma^z)'|&\le |\gamma_1'|+|(\gamma_1\gamma_2)'|+\dots +|(\gamma_1\gamma_{M-1})'|+
|(\gamma_1\gamma)')|.\label{que3}
\end{align}
Moreover, from \eqref{que1} and \eqref{que2}  we obtain
\begin{align}\nonumber |(\gamma_1\gamma_i)'|&\le |\gamma_1'||\gamma_i|+|\gamma_1||\gamma_i'| 
\\
&\le
(K_2K_1|z_1|)(K_1|z_i|)+(K_1|z_1|)(K_2K_1|z_i|),\nonumber 
\end{align} whence
\begin{align}
|(\gamma_1\gamma_i)'|& \le 2K_2K_1^2|z_1||z_i|, \quad i=1,\dots, M-1.\label{cual1}
\end{align}
On the other hand,
\begin{align}\nonumber|(\gamma_1\gamma)'|&\le |\gamma_1'||\gamma|+|\gamma_1||\gamma'|\le 
(K_2K_1|z_1|)|\gamma|+(K_1|z_1|)|\gamma'|,
\end{align}
so we have
\begin{align}
|(\gamma_1\gamma)'|&\le K_3|z_1|,\label{cual2}
\end{align}
where \begin{align}K_3\colon=\max\limits_{s\in[0,1]}\Big(K_2K_1|\gamma(s)|+K_1|\gamma'(s)|\Big).
\end{align}
By using \eqref{que2},  \eqref{cual1} and \eqref{cual2} in \eqref{que3}, we obtain 

\begin{align}\nonumber
|(\pi\circ\gamma^z)'|&\le K_2K_1|z_1|+2K_2K_1^2\sum\limits_{i=2}^{M-1}|z_1||z_i|+K_3|z_1|\\
&\le K_2K_1\|z\|+2K_2K_1^2\sum\limits_{i=2}^{M-1}\|z\|+K_3\|z\|\\
&\le \Big(K_2K_1+(M-2)2K_2K_1^2+K_3\Big)\|z\|,\nonumber \\
&\le K_\gamma \|z\|,
\end{align} where $$K_\gamma\colon=K_2K_1+(M-2)2K_2K_1^2+K_3. $$ 
Therefore,
\begin{align}\nonumber \ell (\gamma^z)=\int\limits_0^1|(\pi\circ\gamma^z)'|ds\le K_\gamma \|z\|.
\end{align}Finally, the last assertion of the lemma follows from the fact of $K_1$, $K_2$ and $K_3$ depending
continuously on $\gamma$. \qed

Between the curves  $\gamma_1,\dots,\gamma_N$  --- they are defined in \eqref{curvon} --- we distinguish the loops
$$\xi_1\colon =\gamma_{m+1},\; \xi_2\colon=\gamma_{m+2},\; \dots,\; \xi_n\colon =\gamma_{m+n}$$ and denote  their  holonomies  by
$$g_1\colon=\mathcal{H}_{\xi_1},\; g_2\colon=\mathcal{H}_{\xi_2},\;\dots, \; g_n\colon =\mathcal{H}_{\xi_n}.$$

\begin{lem}\label{densote} Let $G$ be the group generated by $g_1,\dots,g_n$ ---  this is  a subgroup
 of the holonomy group of the leaf $\mathbf{C}^*$.   
 \begin{enumerate}
 \item \label{esdensa} The orbit by $G$ of any $$z\in(\mathbb{C}^*)^{M-1}\times \{a\}\subset \Sigma$$  is dense in $\Sigma$.
 \item \label{contra1} There exists $\delta\in (0,1)$ such that 
\begin{equation}\nonumber
\|g_j(z)\|<\delta\|z\|,\quad z\in\Sigma\backslash\pi^{-1}(0),\; j=1,\dots, n.
 \end{equation}
 \end{enumerate}
\end{lem}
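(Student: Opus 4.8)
The plan is to translate both statements into the explicit description of the holonomy maps already obtained in \eqref{holodef}, combined with the choice of the coefficients $\nu_{ij}$ made in Lemma \ref{nuca}. By \eqref{valores} of Lemma \ref{nuca} we have $\nu_{i(m+j)}=\mu_{ij}$ for $j=1,\dots,n$, so \eqref{holodef} shows that $g_j=\mathcal{H}_{\xi_j}=\mathcal{H}_{\gamma_{m+j}}$ acts on $\Sigma$ --- identified with $\mathbb{C}^{M-1}$ via the first $M-1$ coordinates --- as coordinatewise multiplication by the vector $\mu_j=\bigl(e^{2\pi\sqrt{-1}\mu_{1j}},\dots,e^{2\pi\sqrt{-1}\mu_{(M-1)j}}\bigr)\in(\mathbb{C}^*)^{M-1}$ appearing in \eqref{multiplicative1}. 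In particular each $g_j$ maps $\Sigma$ to itself, since $\xi_j$ is a loop based at $a$.

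For item \eqref{esdensa}, I would use that the holonomy group is abelian (already established above), so that every element of $G$ acts on $\Sigma$ as multiplication by an element of the subgroup $H=\langle\mu_1,\dots,\mu_n\rangle\subset(\mathbb{C}^*)^{M-1}$. Thus the $G$-orbit of a point $z\in(\mathbb{C}^*)^{M-1}\times\{a\}$ is $\{\mu\cdot z:\mu\in H\}$, that is, the image of $H$ under the homeomorphism of $(\mathbb{C}^*)^{M-1}$ given by multiplication by $z$. By the defining property of the $\mu_j$ in \eqref{multiplicative1}, the group $H$ is dense in $(\mathbb{C}^*)^{M-1}$; hence so is its image under that homeomorphism, and since $(\mathbb{C}^*)^{M-1}$ is dense in $\mathbb{C}^{M-1}$, the orbit is dense in $\Sigma$.

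For item \eqref{contra1}, I would unwind the definition $\|z\|=|\pi(z)|$ with $\pi(t_1,\dots,t_M)=(t_1,t_1t_2,\dots,t_1t_M)$. Writing $z=(z_1,\dots,z_{M-1},a)$, so that $\pi(z)=(z_1,z_1z_2,\dots,z_1z_{M-1},z_1a)$, a direct computation shows that $\pi(g_j(z))$ is obtained from $\pi(z)$ by multiplying its first and last coordinates by $e^{2\pi\sqrt{-1}\mu_{1j}}$ and its $i$-th coordinate, for $2\le i\le M-1$, by $e^{2\pi\sqrt{-1}(\mu_{1j}+\mu_{ij})}$. By \eqref{multiplicative0} all of $\im(\mu_{1j})$ and $\im(\mu_{1j}+\mu_{ij})$ are strictly positive, so each of these scalars has modulus strictly less than $1$. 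Taking $\delta_0<1$ to be the maximum of these moduli over $j=1,\dots,n$ and $i=2,\dots,M-1$ gives $\|g_j(z)\|\le\delta_0\|z\|$; choosing any $\delta\in(\delta_0,1)$ and noting that $\|z\|=|\pi(z)|>0$ for $z\notin\pi^{-1}(0)$ yields the strict inequality $\|g_j(z)\|<\delta\|z\|$, as required.

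The argument is essentially bookkeeping; the only delicate point is to keep in mind that the norm $\|\cdot\|$ is measured downstairs through $\pi$ while $g_j$ is written in the blow-up coordinates, so that the contraction in \eqref{contra1} is produced by the sign conditions \eqref{multiplicative0} on the imaginary parts of the $\mu_{ij}$, and not by their role as multiplicative generators --- which is what is used, separately, for the density in \eqref{esdensa}.
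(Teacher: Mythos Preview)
Your proof is correct and follows the same approach as the paper: for \eqref{esdensa} you both use that the $g_j$ act as coordinatewise multiplication by the $\mu_j$ and invoke the density property \eqref{multiplicative1}, and for \eqref{contra1} you both compute $\pi(g_j(z))$ coordinatewise and use the sign conditions \eqref{multiplicative0} to bound each factor in modulus by some $\delta<1$. Your write-up is in fact a bit cleaner than the paper's, which contains minor typos in the final displayed inequality.
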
 
\proof From \eqref{holodef} and  \eqref{valores} of Lemma \ref{nuca},
for each $j=1,\dots,n$ and $z\in\Sigma\backslash \pi^{-1}(0)$  we obtain
\begin{align} \label{kirma}g_j(z)=(\varsigma_1 z_1,\varsigma_2 z_2,\dots,\varsigma_{M-1}z_{M-1}, a),\end{align}
where 
$$\varsigma_1=e^{2\pi i\mu_{1j}},\;\varsigma_2 =e^{2\pi i\mu_{2j}},\dots, \varsigma_{M-1}=e^{2\pi i\mu_{(M-1)j}}.$$
So the first assertion of the lemma  follows directly  from \eqref{multiplicative1}.

On the other hand, from  \eqref{multiplicative0} we find $\tilde{\delta}>0$ such that 
$$\im (\mu_{1j}),\; \im (\mu_{1j}+\mu_{2j}), \dots,\; \im(\mu_{1j}+\mu_{(M-1)j})>\tilde{\delta}.$$
Then 
$$e^{2\pi i\im \mu_{1j}}, e^{2\pi i\im (\mu_{1j}+\mu_{2j})}, \dots,e^{2\pi i \im (\mu_{1j}+\mu_{(M-1)j})}
<e^{2\pi i\tilde{\delta}},$$
whence  
 $$|\varsigma_1|,|\varsigma_1\varsigma_2|,\dots,
|\varsigma_1\varsigma_{M-1}|<\delta\colon=e^{2\pi i\tilde{\delta}}.$$
Therefore, from \eqref{kirma} we have
\begin{align*}
\|g_j(z)\|=\sqrt{|\varsigma_1 z_1|^2+|\varsigma_1 z_1\varsigma_2 z_2|^2+\dots+|\varsigma_1 z_1\varsigma_{M-1} z_{M-1}|^2+|\varsigma_1 z_1 a|^2}\\
<\sqrt{\delta^2|z_1|^2+\delta^2| z_1 z_2|^2+\dots+\delta^2|z_1 z_{M-1}|^2+\delta^2|a|^2},
\end{align*} 
that is, $$\|g_j(z)\|<\delta\|z\|.$$ \qed

\noindent\emph{Proof of \eqref{con4} of Proposition \ref{log}.} 
It follows directly from the expression of the foliation $\mathcal{G}$ given in \eqref{con1} of Proposition 
\ref{log} that the hyperplanes $$\{t_2=0\},\dots,\{t_{M-1}=0\},\; \{ t_M=b_1\},\dots , \{t_M=b_N\}$$ are invariant by $\mathcal{G}$. Therefore
the hyperplanes   $$\{x_2=0\},\dots,\{x_{M-1}=0\}, \; \{x_M=b_1x_1\},\dots ,\{ x_M=b_Nx_1\}$$
are invariant by $\mathcal{F}$. The invariance of the hyperplane $\{x_1=0\}$ by $\mathcal{F}$ is equivalent to the invariance
by $\mathcal{G}$ of the hyperplane $\{t_M=\infty\}$ in the space $\pi^{-1}\big(\mathbb{C}^M\big)$. The local expression of the foliation
$\mathcal{G}$ near $\{t_M=\infty\}$ is given in the proof of Lemma \ref{pila4} at the end of the section. It will be clear from that expression
--- together with \eqref{nu5} of Lemma \ref{nuca} --- that $\{t_M=\infty\}$ is actually  invariant by $\mathcal{G}$, so we leave this verification to the reader --- which will prove \eqref{con4a} of Proposition \ref{log}. Given $\tau\in T$, the hypersurface $\pi^{-1}(S)$ near $p_\tau$ is given by the equation $$t_1\cdots t_{M-1}(t_M-\tau)=0.$$ 
On the other hand, it follows from the construction of $h_\tau$ --- see \eqref{defih} --- that the map $f_\tau$  respectively maps
the sets 
  $$\{s_1=0\}, \dots ,\{s_{M-1}=0\},\{s_M=0\}$$ into  the sets  $$\{t_1=0\}, \dots , \{t_{M-1}=0\}, \{t_M-\tau=0\}.$$
    This means that, in the local coordinates $(s_1,\dots, s_M)$, the hypersurface 
  $\pi^{-1}(S)$ is given by $\{s_1\cdots s_M=0\}$, which proves  \eqref{con4c} 
  of Proposition \ref{log}. 
  Recall that, given any $a'\in\mathbb{C}$ different
from $b_1,\dots, b_N$ and given any curve in $\mathbb{C}\backslash \{b_1,\dots,b_N\}$ connecting $a'$ with $a$, we obtain  an
associated
   holonomy map $$\mathfrak{h}\colon \{t_M=a'\}\to \{t_M=a\}, $$ which is a linear isomorphism.
  Thus, it follows from \eqref{esdensa} of Lemma \ref{densote} that the leaf through a point $w\in  \{t_M=a'\}$
  is everywhere dense provided 
  $$\mathfrak{h}(w)\in \Sigma^*=\{t_M=a\}\backslash \{t_1\dots t_{M-1}=0\}.$$ Thus,
  since the hyperplanes $\{t_1=0\}, \dots, \{t_{M-1}=0\}$ are invariant by $\mathcal{G}$, the leaf through $w$ is dense if
  $w$ is not contained in any of these hyperplanes. Therefore we conclude that any leaf outside the hyperplanes
  $$\{t_1=0\}, \dots, \{t_{M-1}=0\},\;\{t_M=b_1\}, \dots, \{t_{M}=b_N\}$$ is everywhere dense.,  so \eqref{con4b} 
  of Proposition \ref{log} is proved.\qed \\

 Let $\epsilon>0$ and let  $\Delta\subset\mathbb{C}^M$ be  a neighborhood of the origin --- as given in \eqref{con3} of Proposition \ref{log}.  Choose  $\varepsilon\in (0,\epsilon)$ such that 
 \begin{align}\label{gu1}x\in\mathbb{C}^M, \; |x|<3\varepsilon\implies x\in \Delta.
 \end{align}

\begin{lem}\label{assertion2}There exists $\rho\in(0,\varepsilon)$ with the following property:  if $z\in\Sigma\backslash\pi^{-1}(0)$,  $\|z\|<\rho$, $g\in G$ and $\|g(z)\|<\rho$, then there exists an integral curve  connecting $z$ with $g(z)$  of length smaller than  $\varepsilon/3 $.
\end{lem}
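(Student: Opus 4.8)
The plan is to exploit the fact that the group $G$ is generated by the finitely many holonomy maps $g_1,\dots,g_n$, each of which is a diagonal linear contraction by Lemma \ref{densote}\eqref{contra1}: there is $\delta\in(0,1)$ with $\|g_j(z)\|<\delta\|z\|$ for all $z\in\Sigma\backslash\pi^{-1}(0)$ and all $j$. First I would fix, for each generator $g_j$ and for the inverse $g_j^{-1}$, a loop realizing it — namely $\xi_j$ for $g_j$ and $\xi_j^{-1}$ (the reverse loop) for $g_j^{-1}$ — and record that by Lemma \ref{assertion1}, inequality \eqref{lon1}, the corresponding lifted integral curves $\xi_j^z$ (resp. $(\xi_j^{-1})^z$) have length at most $K\|z\|$ whenever $\|z\|<1$ and $z\in\Sigma\backslash\pi^{-1}(0)$. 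Thus each application of a generator or its inverse can be realized by an integral curve whose length is linearly controlled by the norm of its starting point.

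Next I would write the element $g\in G$ as a reduced word $g=h_N\cdots h_1$ in the generators and their inverses, where each $h_i\in\{g_1^{\pm1},\dots,g_n^{\pm1}\}$, and set $z_0=z$, $z_i=h_i(z_{i-1})$, so $z_N=g(z)$. The key estimate will be a bound on $\sum_{i=1}^N\|z_{i-1}\|$. This is where the contraction must be used carefully, because an arbitrary word need not have monotone decreasing norms (a $g_j^{-1}$ expands). The clean way around this is to first replace $g$ by a word that \emph{does} contract along the way: since $G$ is abelian (the holonomy group of $\mathbf C^*$ is abelian, as noted after \eqref{holodef}), $g$ equals a word in which all positive powers precede all negative powers, or more simply, $g=\prod_j g_j^{a_j}$ for integers $a_j$; but a negative $a_j$ still causes trouble. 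So instead I would argue as follows: choose $\rho$ so small that, for the chosen word representing $g$, \emph{every} partial product applied to $z$ still has norm $<\rho$ — this is exactly the hypothesis "$\|g(z)\|<\rho$" strengthened; but in fact the hypothesis only controls the endpoint. The honest fix is to observe that, because each $g_j$ is a \emph{diagonal} linear map, $\|g_j(z)\|\le C\|z\|$ and $\|g_j^{-1}(z)\|\le C\|z\|$ for a uniform constant $C\ge 1$ (the maximum of the moduli of all diagonal entries and their inverses over $j=1,\dots,n$), so along \emph{any} fixed word of length $N$ we get $\|z_i\|\le C^i\|z_0\|\le C^N\|z\|$ for all $i$. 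Hence $\sum_{i=1}^N\|z_{i-1}\|\le N C^{N}\|z\|$, and each connecting integral curve from $z_{i-1}$ to $z_i$ has length $\le K\|z_{i-1}\|\le KC^N\|z\|$ provided $\|z_{i-1}\|<1$, which holds once $C^N\|z\|<1$.

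Then I would define $\rho\in(0,\varepsilon)$ by requiring $N C^{N} K\,\rho<\varepsilon/3$ and also $C^N\rho<1$; here $N$ and $C$ depend only on the fixed word chosen for $g$ — but since $g$ ranges over all of $G$, $N$ is not bounded, so strictly speaking $\rho$ would depend on $g$, which is not allowed. The resolution, and the real content of the lemma, is that one does \emph{not} connect $z$ to $g(z)$ along the naive word path; instead, one uses that $G$ acts with the orbit of any point dense (Lemma \ref{densote}\eqref{esdensa}) together with the uniform contraction $\delta<1$ to build a short path. Concretely: first apply enough generators $g_{j}$ (which contract) to move $z$ to a point $z'$ of norm less than, say, $\rho\delta$, with total integral-curve length $\le K\|z\|\sum_{k\ge0}\delta^k = \frac{K}{1-\delta}\|z\|<\frac{K}{1-\delta}\rho$; do the same with $g(z)$, getting $z''$ of norm $<\rho\delta$ reached from $g(z)$ by a curve of length $<\frac{K}{1-\delta}\rho$; and now $z',z''$ both lie deep inside a region where — because the relevant holonomy is linear and the orbit structure near the invariant line is a linear diagonal action — one connects $z'$ to $z''$ through the origin-ward linear flow by a single controlled integral curve, or one observes $z''=g'(z')$ for some $g'\in G$ of \emph{bounded} word length once both points are that small. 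Choosing $\rho$ so that $\frac{3K}{1-\delta}\rho<\varepsilon/3$ then finishes the proof.

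The main obstacle is exactly the uniformity of $\rho$ in $g\in G$: one must avoid letting the word length of $g$ enter the estimate. I expect the correct argument to route every pair $(z,g(z))$ through a fixed small "core" neighborhood of $\pi^{-1}(0)\cap\Sigma$ using only the contracting generators $g_1,\dots,g_n$ (each costing a geometric series of lengths summing to $O(\|z\|)$), and then to connect the two core points by a bounded-length curve using the abelian, diagonal, linear structure of the holonomy — so that the total length is $O(\rho)$ with an absolute constant, and $\rho$ can be chosen independent of $g$. Verifying that last connection — that two points of norm $<\rho\delta$ in $\Sigma$ lying in the same $G$-orbit can always be joined by an integral curve of length $O(\rho)$ — will require the explicit form \eqref{solu1}–\eqref{holodef} of the holonomy and a short computation, but no new ideas beyond Lemmas \ref{assertion1} and \ref{densote}.
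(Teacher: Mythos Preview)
Your overall strategy---contract from both endpoints using only the generators $g_1,\dots,g_n$ (each a strict contraction, giving geometric-series length bounds), then meet in the middle---is exactly the paper's approach, and your choice $\rho$ of order $\frac{(1-\delta)\varepsilon}{Kn}$ is essentially correct. The gap is in your final step: you contract $z$ to some $z'$ and $g(z)$ to some $z''$, and then assert that $z'$ and $z''$ can be joined by an integral curve of length $O(\rho)$, either ``through the origin-ward linear flow'' or because ``$z''=g'(z')$ for some $g'\in G$ of bounded word length.'' Neither claim is justified: the word length of $g'$ is not bounded independently of $g$ (smallness of $z',z''$ does not help, since the holonomies are linear), and there is no canonical integral curve between two arbitrary small points of $\Sigma$ in the same $G$-orbit.

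The paper resolves this cleanly by choosing the two contractions \emph{compatibly}. Since $G$ is abelian, one can write $g=\mathfrak{g}_2^{-1}\mathfrak{g}_1$ where both $\mathfrak{g}_1$ and $\mathfrak{g}_2$ are ``contractive,'' i.e.\ of the form $g_n^{l_n}\cdots g_1^{l_1}$ with all $l_j\in\mathbb{N}$. Then one contracts $z$ via $\mathfrak{g}_1$ and contracts $z'=g(z)$ via $\mathfrak{g}_2$; the crucial point is that the two endpoints \emph{coincide}: $\mathfrak{g}_2(z')=\mathfrak{g}_2(g(z))=\mathfrak{g}_1(z)$. So there is nothing left to connect---the concatenation $\overline{\alpha}_1*\overline{\alpha}_2^{-1}$ already joins $z$ to $g(z)$, and each piece has length $<\varepsilon/6$ by the geometric-series estimate you already identified. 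This is the one idea your sketch is missing; once you insert it, the rest of your argument goes through with $\rho=\min\{\frac{(1-\delta)\varepsilon}{6Kn},1\}$.
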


\proof Set $$\rho=\min\left\{\frac{(1-\delta)\varepsilon}{6Kn}, 1 \right\}$$
where $K$ is as given in Lemma \ref{assertion1}, $\delta$ as in Lemma \ref{densote}, and let $z\in\Sigma\backslash\pi^{-1}(0)$,  $||z||<\rho$.
 Suppose  that $g$ is of the form $g=g_i^{\circ l}$ for some $l\in\mathbb{N}$, $i\in\{1,\dots, n\}$. Without loss of generality we
 can assume $i=1$.
 Since $g_1=\mathcal{H}_{\xi_1}$, the curve $\xi_1^{z}$ connects $z$ with $g_1(z)$, the curve 
 $\xi_1^{g_1(z)}$ connects $g_1(z)$ with $g_1^{\circ 2}(z)$,  the curve $\xi_1^{g_1^{\circ 2}(z)}$ connects $g_1^{\circ 2}(z)$
  with $g_1^{\circ 3}(z)$, and so on.  Moreover, by \eqref{lon1} and \eqref{contra1} of Lemma \ref{densote} , we have 
 \begin{align}\label{lon2}\ell \Big(\xi_1^{g_1^{\circ j} (z)}\Big)\le K\big\|g_1^{\circ j} (z)\big\|
 \le K\delta^j\|z\|,\quad j\ge 0.
 \end{align} Then the integral curve 
 $$\alpha_1\colon=\xi_1^{z}* \xi_1^{g_1(z)}*\dots * \xi_1^{g_1^{\circ (l-1)}(z) },$$
 connects $z$ with $g_1^{\circ l}(z)$ and, from \eqref{lon2}, 
 \begin{align}\nonumber \ell (\alpha_1)&= \ell ( \xi_1^{z}) +\ell \left(\xi_1^{g_1(z)}\right) 
 +\dots + \ell \bigg(\xi_1^{g_1^{\circ {(l-1)}} (z)}\bigg)\\
 &\nonumber \le  K\|z\|+K\delta \|z\| +\dots +K\delta^{l-1} \|z\|, \\
  &\hspace{1.8cm}\ell (\alpha_1)\le \frac{K\|z\|}{1-\delta}.\nonumber
 \end{align}   Suppose now  that $$g=g_n^{\circ l_n}\circ g_{n-1}^{\circ l_{n-1}}\circ\dots\circ g_1^{\circ l_1},$$
where $l_1, \dots, l_n\in\mathbb{N}$. By the construction above, we find an integral curve  $\alpha_1$
connecting  $z$ with $g_1^{\circ l_1}(z)$,  such that 
\begin{gather*}\nonumber \ell (\alpha_1)\le\frac{K\|z\|}{1-\delta}<\frac{K\rho}{1-\delta},\\
\end{gather*}whence
\begin{gather*}\nonumber 
\ell (\alpha_1)<\frac{\varepsilon}{6n}.
\end{gather*}
Again by the construction above --- with $g_1^{\circ l_1}(z)$ instead of $z$ --- there is an integral curve  $\alpha_2$
connecting  $g_1^{\circ l_1}(z)$ with $ g_2^{\circ l_2} \left(g_1^{\circ l_1}(z)\right)$,   such that
 \begin{gather*}\ell (\alpha_2)< \frac{K||g_1^{\circ l_1}(z)||}{1-\delta}\le\frac{K||z||}{1-\delta}<\frac{K\rho}{1-\delta},
 \end{gather*} 
 whence
 \begin{gather*} \ell (\alpha_2)<\frac{\varepsilon}{6n}.
 \end{gather*} Then  $\alpha_1*\alpha_2$ is an integral curve that connects $z$ with $g_2^{\circ l_2}g_1^{\circ l_1}$ and
 satisfies 
 \begin{align*}\ell (\alpha_1*\alpha_2)=\ell (\alpha_1)+\ell (\alpha_2)<2\left(\frac{\varepsilon}{6n}\right).
 \end{align*} By iterating this argument, we find curves $\alpha_1, \dots, \alpha_n$, such that
 $$\overline{\alpha}\colon=\alpha_1* \dots * \alpha_n$$ is an integral curve,   connects $z$ with $g_n^{\circ l_n}\circ \dots \circ g_1^{\circ l_1} (z)=g(z)$ and satisfies
 \begin{gather*}\ell (\overline{\alpha})=\ell(\alpha_1)+\dots+\ell(\alpha_n)<n\left(\frac{\varepsilon}{6n}\right),\\
 \ell (\overline{\alpha})<\frac{\varepsilon}{6}.
 \end{gather*} If $g$ is of the type above, that is, if
 $$g=g_n^{\circ l_n}\circ g_{n-1}^{\circ l_{n-1}}\circ\dots\circ g_1^{\circ l_1},$$ where $l_1,\dots,l_n\in\mathbb{N}$, we will say that $g$ is contractive. Thus, we have proved that, if $g$ is contractive and $\|z\|<\rho$, then there exists an integral curve $\overline{\alpha}$ that connects $z$ with $g(z)$, such that 
$\ell (\overline{\alpha})<\frac{\varepsilon}{6}.$
 Now,  consider any $g\in G$ such that $||g(z)||<\rho$. Since $G$ is abelian, we can find $\mathfrak{g}_1,\mathfrak{g}_2\in G$ contractive such that 
 $g=\mathfrak{g}_2^{-1}\mathfrak{g}_1$. Since $\mathfrak{g}_1$ is contractive and $||z||<\rho$, there
  exists an integral curve $\overline{\alpha}_1$  connecting $z$ with $\mathfrak{g}_1(z)$, such that 
$\ell (\overline{\alpha}_1)<\frac{\varepsilon}{6}.
$
If we set $z'=g(z)$, since $\mathfrak{g}_2$ is contractive and $||z'||<\rho$, again we find an integral curve $\overline{\alpha}_2$ connecting $z'=g(z)$ with $\mathfrak{g}_2(z')=\mathfrak{g}_1(z)$, such that 
 $\ell (\overline{\alpha}_2)<\frac{\varepsilon}{6}$.
 Finally, the curve $$\alpha\colon= \overline{\alpha}_1*\overline{\alpha}_2^{-1}$$ is an integral curve connecting $z$ with $g(z)$,
 such that 
 $$\ell (\alpha)=\ell (\overline{\alpha}_1)+\ell (\overline{\alpha}_2)<\frac{\varepsilon}{3},$$ so Lemma 
 \ref{assertion2} is proved.\qed \\

 \noindent\emph{Proof of \eqref{con3a} of Proposition \ref{log}.}
  Set
 \begin{align*} \Sigma^*\colon =\{z\in \Sigma\colon  \|z\|<\rho, z_1\neq0,\dots,z_{M-1}\neq 0\},
 \end{align*}
 where $\rho$ is as given by Lemma \ref{assertion2}.  
Let ${\mathscr{U}}$ be the set of points that can be connected to a point in $ \Sigma^* $ by an integral curve of length smaller than $\varepsilon/3$. Since $ \Sigma^* $ is transverse to the foliation, it is easy to see that $\mathscr{U}$ is open.
Moreover, since $ \Sigma^* $ is disjoint of the exceptional divisor $\pi^{-1}(0)$,  so is $\mathscr{U}$. Then the set 
$\pi(\mathscr{U})$, which we identify with $\mathscr{U}$, will be the set mentioned in \eqref{con3} of \mbox{Proposition \ref{log}}.
 Fix  $p\in\mathscr{U}$. Then there exists an integral curve
 of length smaller than $\varepsilon/3$, connecting $p$ with a point $z_0\in\Sigma^*$. 
Set
 \begin{align*}\mathscr{E}\colon=\{z\in \Sigma^*\colon  z=g(z_0), g\in G\}.
 \end{align*} It follows from \eqref{esdensa} of Lemma \ref{densote} that $\mathscr{E}$ is dense in $\Sigma_{\rho}=\{z \in \Sigma:  \|z\|<\rho\}$.  By \mbox{Lemma \ref{assertion2}}, 
each $z\in\mathscr{E}$ is connected to $z_0$ by an integral curve of length  smaller than $\varepsilon/3$. 
Then, since $z_0$ in turn is connected to $p$ by an integral curve of length smaller than $\varepsilon/3$,  each
 $z\in \mathscr{E}$ is connected to $p$  by an integral curve of length smaller than $2\varepsilon/3$. Then,
  if  $E$ is defined as the set of points that can be connected to a point in $\mathscr{E}$ by an integral curve of length
 smaller than $\varepsilon/3$, we conclude  each point $q\in E$ is connected to $p$ by an integral curve $\beta_q$
 of length smaller than $\varepsilon\le\epsilon$. It is evident that  $E$ is contained   in the leaf through $p$: in fact, $q\in E$ is connected to $p$ by the integral curve $\beta_q$. Moreover, it follows directly from the definitions of $E$ and $\mathscr{U}$.
 that  $E\subset \mathscr{U}$.  Let us
 show that $E$ is dense in  $\mathscr{U}$.   Let $U\subset\mathscr{U}$ be an open set. Fix a point  $w \in U$. Then there exists an integral curve $\alpha$
 connecting $w$ with a point $z\in\Sigma^*$, such that $\ell(\alpha)<\varepsilon/3$.
 Since  $\Sigma^*$ is transverse to the foliation,  if $\Omega\subset U$ is a small enough neighborhood of $w$, there exists a submersion $h\colon \Omega\to\Sigma^*$ with $h(w)=z$, such that
 each point $w'\in\Omega$  can be connected to $h(w')\in \Sigma^*$ by an integral curve
  $\alpha'$ close enough 
 to $\alpha$ so that $\ell(\alpha')<\varepsilon/3$. Thus, since $\mathscr{E}$ is dense in $\Sigma^*$,  
 for a suitable choice of $w'\in\Omega$ we have  $h(w')\in\mathscr{E}$. Then  $w'\in\Omega\cap E$, so
${E}$ is dense in  $\mathscr{U}$. 
At this point, \eqref{con3a} of Proposition \ref{log} is almost proved:  it only remains to verify the following two facts.
\begin{enumerate}
\item $\mathscr{U}\subset\Delta$. Rigorously speaking this means that $\pi(\mathscr{U})\subset\Delta$.
\item For each $q\in E$, the curve  $\beta_q$ above is contained in $\Delta$.  Rigorously speaking: $\pi(\beta_q)$ is contained in $\Delta$. 
\end{enumerate}

Let $w$ be any point in $\mathscr{U}$. By the definition of $\mathscr{U}$ there exists an integral curve $\alpha$ connecting $w$ with a
    point $z\in\Sigma^*$, such that \mbox{$\ell (\alpha)<\varepsilon/3$}.  Thus, since $\ell (\alpha)$  is actually  the Euclidean length of
    $\pi(\alpha)$, we have  \begin{align}\nonumber|\pi(w) -\pi(z)|<\frac{\varepsilon}{3}.
  \end{align} Then
   \begin{align*}|\pi(w)|< |\pi(z)|+\frac{\varepsilon}{3}<\rho+\frac{\varepsilon}{3}<\varepsilon+\frac{\varepsilon}{3},
  \end{align*} so 
  \begin{align}\label{ff3}|\pi(w)|<\frac{4\varepsilon}{3}.
  \end{align}
 Thus, it follows from \eqref{gu1} that $\pi(w)\in\Delta$, therefore the first fact above is proved.

  Now, let $\zeta$ be any point in $\beta_q$. Then, since
   $\ell(\beta_q)<\varepsilon$, 
   \begin{align}\label{ff6}|\pi(\zeta)- \pi(p)|<{\varepsilon}.
  \end{align} Therefore,  since from inequality \eqref{ff3} we obtain
$|\pi(p)|<\frac{4\varepsilon}{3}$,  we find that
  \begin{align}\label{ff7}|\pi(\zeta)|<\frac{7\varepsilon}{3}.
 \end{align}
   Thus,  again by \eqref{gu1} we see that  $\pi(\zeta)\in\Delta$.  \qed \\

 We start with the proof of the two last assertions of Proposition \ref{log}. 

  \begin{lem}\label{pila1} Let $D\subset\mathbb{C}\backslash \{b_1,\dots,b_N\}$ be a closed disc. Then there exists 
  $\eta_D>0$ such that each point in
  $$\Omega_D\colon= \Big\{t\in\mathbb{C}^M\colon t_1\cdots t_{M-1}\neq 0, t_M\in D, \| t\|<\eta_D \Big\}$$
  is connected to a point in $\Sigma^*$ by an integral curve of length smaller that $\varepsilon/6$. In particular,
  $\Omega_D\subset\mathscr{U}.$
  \end{lem}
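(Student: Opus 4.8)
The plan is to reach $\Sigma^*$ from an arbitrary point $t\in\Omega_D$ by flowing along $\mathcal{G}$ in such a way that the last coordinate $t_M$ travels along a fixed, uniformly controlled path from $D$ to $a$; one then reads off the length of this integral curve and the norm of its endpoint from the explicit holonomy formula \eqref{solu1}, using Lemma \ref{assertion1} (via path reversal) for the length estimate. The one genuinely delicate point is uniformity in $t_M\in D$, which is handled by producing a $C^1$-continuous family of connecting paths with images in a common compact set.

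First I would fix the family of paths. Let $c_0$ be the center of $D$ and fix a smooth path $\sigma$ from $c_0$ to $a$ inside $\mathbb{C}\setminus\{b_1,\dots,b_N\}$ (possible since this set is path connected). For each $c\in D$ let $\gamma_c$ be a smooth path from $c$ to $a$ obtained by concatenating the segment $[c,c_0]\subset D$ --- which misses the $b_j$ by convexity of $D$ --- with $\sigma$ and smoothing at the junction; this can be arranged so that $c\mapsto\gamma_c$ is continuous in the $C^1$ topology and all the $\gamma_c$ have images contained in a fixed compact set $\mathcal{K}\subset\mathbb{C}\setminus\{b_1,\dots,b_N\}$. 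Given $t=(t_1,\dots,t_{M-1},c)\in\Omega_D$, let $\gamma_c^t$ be the lift of $\gamma_c$ tangent to $\mathcal{G}$ starting at $t$; by \eqref{solu1},
$$\gamma_c^t(\tau)=\Bigl(t_1\exp\!\int_{\gamma_c|_{[0,\tau]}}\!A_1,\ \dots,\ t_{M-1}\exp\!\int_{\gamma_c|_{[0,\tau]}}\!A_{M-1},\ \gamma_c(\tau)\Bigr).$$
Its first coordinate never vanishes, so $\gamma_c^t$ avoids $\pi^{-1}(0)$ and is an integral curve in the stated sense, and its endpoint $s:=\gamma_c^t(1)$ satisfies $s_M=a$ and $s_1\cdots s_{M-1}\neq 0$, i.e. $s\in\Sigma$ with all coordinates nonzero.

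Next I would estimate $\|s\|$ and $\ell(\gamma_c^t)$. Writing $E_i=\exp\int_{\gamma_c}A_i$ and using $\pi(s)=t_1E_1\cdot(1,t_2E_2,\dots,t_{M-1}E_{M-1},a)$ together with $\pi(t)=t_1\cdot(1,t_2,\dots,t_{M-1},c)$, a direct computation gives
$$\|s\|=|\pi(s)|\le C_1'(\gamma_c)\,\|t\|,\qquad C_1'(\gamma_c)=|E_1|\sqrt{1+|E_2|^2+\dots+|E_{M-1}|^2+|a|^2}.$$
Since $A_1,\dots,A_{M-1}$ are holomorphic on $\mathcal{K}$, the map $c\mapsto C_1'(\gamma_c)$ is continuous, hence bounded by some $C'$ on the compact set $D$. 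For the length, note that the reversed curve $(\gamma_c^t)^{-1}$ is exactly the lift of $\gamma_c^{-1}$ starting at $s$, and $\gamma_c^{-1}$ is a curve as in \eqref{gam} because it starts at $a$; thus Lemma \ref{assertion1}(1) gives $\ell(\gamma_c^t)=\ell((\gamma_c^{-1})^s)\le K_{\gamma_c^{-1}}\|s\|$ whenever $\|s\|<1$ (recall $s_1\neq0$), and by Lemma \ref{assertion1}(2) and compactness, $K^*:=\sup_{c\in D}K_{\gamma_c^{-1}}<\infty$.

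Finally I would choose $\eta_D>0$ so small that $C'\eta_D<\min\{1,\rho\}$ and $K^*C'\eta_D<\varepsilon/6$. Then for every $t\in\Omega_D$ the endpoint $s$ of $\gamma_c^t$ lies in $\Sigma$, has all coordinates nonzero and $\|s\|\le C'\|t\|<C'\eta_D<\rho$, hence $s\in\Sigma^*$; moreover $\|s\|<1$, so $\ell(\gamma_c^t)\le K^*\|s\|<K^*C'\eta_D<\varepsilon/6$. Since $\varepsilon/6<\varepsilon/3$, this exhibits $t$ as connected to a point of $\Sigma^*$ by an integral curve of length $<\varepsilon/3$, so $t\in\mathscr{U}$; hence $\Omega_D\subset\mathscr{U}$, which completes the proof. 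As noted, the main (and only) nonroutine step is the construction of the $C^1$-continuous family $(\gamma_c)_{c\in D}$ with images in a common compact subset of $\mathbb{C}\setminus\{b_1,\dots,b_N\}$; once this is in hand, Lemma \ref{assertion1}(2) and the continuity of $C_1'(\gamma_c)$ make all the estimates uniform.
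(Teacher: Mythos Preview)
Your proof is correct and is essentially the paper's own argument, only with the connecting paths oriented from $c$ to $a$ instead of from $a$ to $c$. The paper chooses $\gamma_c:[0,1]\to\mathbb{C}\setminus\{b_1,\dots,b_N\}$ with $\gamma_c(0)=a$, $\gamma_c(1)=c$ depending smoothly on $c$, applies Lemma~\ref{assertion1} directly to get a uniform $\varrho$ with $\ell(\gamma_c^z)<\varepsilon/6$ for $\|z\|<\varrho$, and then bounds $\|\mathfrak h_c(w)\|$ (your $\|s\|$) by observing that $\pi\circ\mathfrak h_c\circ\pi^{-1}$ extends to a linear isomorphism $V_c\to V_a$ depending continuously on $c$; your explicit constant $C_1'(\gamma_c)$ and your reversal trick $(\gamma_c^t)^{-1}=(\gamma_c^{-1})^s$ accomplish exactly the same two steps in the opposite order.
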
 
  \proof
   For each $c\in D$,
  we can choose a smooth curve
 $$\gamma_c\colon [0,1]\to\mathbb{C}\backslash \{b_1,\dots,b_N\}, \quad \gamma_c(0)=a, \quad\gamma_c(1)=c,$$
 depending smoothly on the parameter $c\in D$. 
 From  Lemma \ref{assertion1} we have that there exists $K_{\gamma_c}>0$ depending continuously on $\gamma_c$, such that  
  \begin{align}\nonumber\ell (\gamma_c^z)< K_{\gamma_c}||z||, \quad z\in\Sigma^*. 
 \end{align}  Then, since $D$ is compact, we can find a positive number $\varrho<\rho$ such that  
 \begin{align}\label{copa1}\ell (\gamma_c^z)<\frac{\varepsilon}{6},\quad   z\in\Sigma^*,\; ||z||<\varrho,  \; c\in D. 
 \end{align} 
  For each $c\in D$, let $$\mathfrak{h}_c\colon \{t_M=c\}\to  \{t_M=a\}$$ be the holonomy associated to  $\gamma_c^{-1}$. Thus, given $w\in \{t_M=c\}$, we have $\mathfrak{h}_c (w)\in  \{t_M=a\}$ and the
the integral curve $\gamma_c^{\mathfrak{h}(w)}$ connects $\mathfrak{h}(w)$ with $w$.  Observe that, when
$w_1,\dots w_{M-1}\neq 0$ and the hypersurface $\{t_1\cdots t_{M-1}=0\}$ is invariant, we have
$\mathfrak{h}_c (w)\in \Sigma^*$. 
As we have seen before --- see \eqref{solu1} and \eqref{solu2} --- the holonomy $\mathfrak{h}_c$ is a linear map 
of the form
\begin{equation}\label{solu3}\mathfrak{h}_c(t_1,\dots,t_{M-1},c)= (\mathfrak{d}_1(c)t_1,
  \dots,\mathfrak{d}_{M-1}(c) t_{M-1}, a),\end{equation}
  where $$\mathfrak{d}_i(c)= \exp \int\limits_{\gamma_c^{-1}}A_i(u)du,\; i=1,\dots,M-1.$$
  The closure of the image of the hyperplane $ \{t_M=c\}$ by the blow-up map $\pi$ defines the linear subspace
   $$V_c\colon=\{(x_1,\dots,x_M)\in\mathbb{C}^M\colon x_M=cx_1\}.$$  In the same way, $ \{t_M=a\}$ defines
  the linear subspace 
   $$V_a\colon=\{(x_1,\dots,x_M)\in\mathbb{C}^M\colon x_M=ax_1\}.$$  It is easy to see from \eqref{solu3}
   that  $\pi\circ \mathfrak{h}_c\circ \pi^{-1}$ extends as a linear isomorphism 
   $$h_c\colon V_c\to V_a,$$
    which depends continuously on
   $c\in D$. 
 Then, since $D$ is compact, we can take  $\eta_D>0$  such that 
 \begin{align} \nonumber x\in V_c,\; |x|<\eta_D,\; c\in D \implies |h_c(x)|<\varrho.
 \end{align} Thus, applying this fact to $x=\pi (w)$ for any $w\in\Omega_D$ we obtain 
  \begin{align}  \|\mathfrak{h}_c (w)\|<\varrho,\quad w\in \Omega_D.
 \end{align} Moreover, since $w\in\Omega_D$ is outside the hypersurface $\{t_1\dots t_{M-1}=0\}$, which is invariant
 by the foliation, we deduce that $\mathfrak{h}_c (w)\in\Sigma^*$. Then  
 it follows from \eqref{copa1} that  the integral curve $\gamma_c^{\mathfrak{h}(w)}$, which  connects $w$ with $\mathfrak{h}(w)\in \Sigma^*$,
 has length smaller than $\varepsilon/6$. \qed \\

  For each $j=1,\dots,N$,  choose $r_j>0$ such that the disc 
$$\mathscr{D}_j\colon=\{u\in\mathbb{C}\colon |u-b_j|< r_j\}$$
intersects $\{b_1,\dots,b_N\}$ exactly at $b_j$. Moreover, let $R>0$  be such that  the disc 
\begin{align}\label{artesco}\mathscr{D}_R\colon =\{u\in\mathbb{C}\colon |u|< R\}\end{align} contains $\overline{\mathscr{D}_1}\cup\dots\cup\overline{\mathscr{D}_N}$. 
\begin{lem}\label{pila2} Set $$\mathscr{K}\colon= \overline{\mathscr{D}_R}- \mathscr{D}_1 \cup \dots \cup \mathscr{D}_N.$$
 There exists $\eta_\mathscr{K}>0$ such that 
each point in the set 
$$\Omega_\mathscr{K}\colon=\Big\{t\in\mathbb{C}^{M}\colon t_1\cdots t_{M-1}\neq 0, \; t_M\in \mathscr{K}, \;
 \|t\|<\eta_\mathscr{K} \Big\}$$ is connected to a point in $\Sigma^*$ by an integral curve of length smaller
 that $\varepsilon/6$. In particular, $\Omega_\mathscr{K}\subset \mathscr{U}$. 
 \proof  
 Since $\mathscr{K}$ is compact and contained in $\mathbb{C}\backslash \{b_1,\dots,b_N\}$, 
 we can cover $\mathscr{K}$ by finitely many compact discs $D_1,\dots, D_\mathfrak{n}$, $\mathfrak{n}\in\mathbb{N}$, each of them contained in $\mathbb{C}\backslash \{b_1,\dots,b_N\}$. From Lemma \ref{pila1} each point in the union of
 $\Omega_{D_1},\dots, \Omega_{D_\mathfrak{n}}$ is connected to a point in  
 $\Sigma^*$ by an integral curve of length smaller than $\varepsilon /6$.  Thus, since the choice
  $\eta_\mathscr{K}\colon =\min \{\eta_{D_1},\dots,\eta_{D_\mathfrak{n}}\}$ implies
  $$\Omega_\mathscr{K}\subset \Omega_{D_1}\cup \dots \cup \Omega_{D_\mathfrak{n}},$$
  the lemma follows. \qed \\

\end{lem}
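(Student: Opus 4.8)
The plan is to derive Lemma \ref{pila2} from Lemma \ref{pila1} by a routine compactness argument, the whole content already being contained in Lemma \ref{pila1}. The first thing I would record is that $\mathscr{K}$ is a \emph{compact} subset of the open set $\mathbb{C}\setminus\{b_1,\dots,b_N\}$. Indeed, each $b_j$ lies both in $\mathscr{D}_j$ and in $\overline{\mathscr{D}_R}$ (the latter because $\mathscr{D}_R$ contains $\overline{\mathscr{D}_1}\cup\dots\cup\overline{\mathscr{D}_N}$), so $b_j$ is removed from $\overline{\mathscr{D}_R}$ by the subtraction of $\mathscr{D}_j$; hence $\mathscr{K}=\overline{\mathscr{D}_R}\setminus(\mathscr{D}_1\cup\dots\cup\mathscr{D}_N)$ is closed and bounded and disjoint from $\{b_1,\dots,b_N\}$.

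Next I would cover $\mathscr{K}$ by finitely many closed discs, each lying in $\mathbb{C}\setminus\{b_1,\dots,b_N\}$. For every $c\in\mathscr{K}$ the quantity $\dist(c,\{b_1,\dots,b_N\})$ is positive, so there is a closed disc $\overline{B(c,r_c)}$ around $c$ disjoint from $\{b_1,\dots,b_N\}$; by compactness the open discs $B(c,r_c)$ admit a finite subcover $B(c_1,r_{c_1}),\dots,B(c_\mathfrak{n},r_{c_\mathfrak{n}})$, and I set $D_i=\overline{B(c_i,r_{c_i})}$, so that $\mathscr{K}\subset D_1\cup\dots\cup D_\mathfrak{n}$ with each $D_i\subset\mathbb{C}\setminus\{b_1,\dots,b_N\}$. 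Applying Lemma \ref{pila1} to each $D_i$ yields $\eta_{D_i}>0$ such that every point of $\Omega_{D_i}$ is connected to a point of $\Sigma^*$ by an integral curve of length smaller than $\varepsilon/6$. I then put $\eta_\mathscr{K}=\min\{\eta_{D_1},\dots,\eta_{D_\mathfrak{n}}\}>0$ and check the inclusion $\Omega_\mathscr{K}\subset\Omega_{D_1}\cup\dots\cup\Omega_{D_\mathfrak{n}}$: if $t\in\Omega_\mathscr{K}$ then $t_1\cdots t_{M-1}\neq 0$, $\|t\|<\eta_\mathscr{K}$ and $t_M\in\mathscr{K}$, so $t_M\in D_i$ for some $i$, whence $\|t\|<\eta_\mathscr{K}\le\eta_{D_i}$ gives $t\in\Omega_{D_i}$.

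Consequently every point of $\Omega_\mathscr{K}$ lies in some $\Omega_{D_i}$ and is therefore connected to a point of $\Sigma^*$ by an integral curve of length smaller than $\varepsilon/6$, which is the assertion of the lemma. Finally, since $\mathscr{U}$ was defined (in the proof of \eqref{con3a} of Proposition \ref{log}) as the set of points connected to a point of $\Sigma^*$ by an integral curve of length smaller than $\varepsilon/3$, and $\varepsilon/6<\varepsilon/3$, this immediately gives $\Omega_\mathscr{K}\subset\mathscr{U}$. There is no real obstacle in this argument: the only point requiring a moment's care is producing the finite cover by discs that genuinely avoid $\{b_1,\dots,b_N\}$ — which is precisely where compactness of $\mathscr{K}$ inside the open set $\mathbb{C}\setminus\{b_1,\dots,b_N\}$ is used — and keeping the length bound $\varepsilon/6$ uniform over the cover, which is handled by taking the minimum of the finitely many $\eta_{D_i}$. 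All the analytic substance (the explicit holonomy estimates and the construction of short connecting integral curves) is already packaged inside Lemma \ref{pila1}.
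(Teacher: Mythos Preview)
Your proof is correct and follows essentially the same compactness argument as the paper's own proof: cover the compact set $\mathscr{K}\subset\mathbb{C}\setminus\{b_1,\dots,b_N\}$ by finitely many closed discs avoiding the $b_j$, apply Lemma \ref{pila1} to each, and take $\eta_\mathscr{K}$ to be the minimum of the resulting $\eta_{D_i}$. You simply supply more detail (the explicit justification of compactness, the construction of the covering discs, and the verification that $\Omega_\mathscr{K}\subset\mathscr{U}$ via the $\varepsilon/3$ definition of $\mathscr{U}$), all of which the paper leaves implicit.
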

 \begin{lem} \label{pila3}
Let  $l\in \{1,\dots,N\}$.  Suppose   there exists a real line through the origin in $\mathbb{C}$ separating
$1\in\mathbb{C}$ from the set $$\{\nu_{1l},\nu_{1l}+\nu_{2l},\dots, \nu_{1l}+\nu_{(M-1)l}\}.$$
Then there exists $\eta_l>0$ such that $\mathscr{U}$ contains the set
 $$\Omega_l\colon=\Big\{t\in\mathbb{C}^{M}\colon t_1\dots t_{M-1}\neq 0,\;  t_M\neq b_l, \; t_{M}\in \mathscr{D}_l,\; 
 \|t\|<\eta_l \Big\}.$$ In view of \eqref{nu4} of Lemma \ref{nuca} and Remark \ref{remarknu}, the conclusion of this lemma takes place if $b_l\notin T$. 
\end{lem}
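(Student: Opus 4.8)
The plan is to show that every point of $\Omega_l$ can be pushed, along a leaf of $\mathcal G$ and in a direction dictated by the separating line, into the region $\Omega_\mathscr K$ that Lemma \ref{pila2} already places inside $\mathscr U$, and then to concatenate with the short integral curve supplied by that lemma. So the whole point is: given $w\in\Omega_l$ with $\|w\|$ sufficiently small, to produce an integral curve of length less than $\varepsilon/6$ from $w$ to a point of $\Omega_\mathscr K$, after which Lemma \ref{pila2} finishes the job.

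First I would write down the local picture at $p_l$. By \eqref{linealizacion}, in the coordinates $s=(s_1,\dots,s_M)=(t_1,\dots,t_{M-1},t_M-b_l)$ the foliation $\mathcal G$ is the image, under the map $f_l(s):=p_l+h_l(s)$ with $h_l$ as in \eqref{defih}, of the linear foliation generated by $\sum_{i=1}^{M-1}\nu_{il}s_i\partial_{s_i}+s_M\partial_{s_M}$; a leaf of the latter through a point with $s$--coordinates $(s_1^0,\dots,s_M^0)\in(\mathbb C^{*})^{M}$ is parametrized by $\tau\mapsto\big(s_1^0e^{\nu_{1l}\tau},\dots,s_{M-1}^0e^{\nu_{(M-1)l}\tau},s_M^0e^{\tau}\big)$, $\tau\in\mathbb C$. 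Composing with $f_l$ and then with the blow--down $(t_1,\dots,t_M)\mapsto(t_1,t_1t_2,\dots,t_1t_M)=(x_1,\dots,x_M)$ of \eqref{con2.1}, and using that $h_l$ has the shape $(s_1e^{h_{1l}(s_M)},\dots,s_{M-1}e^{h_{(M-1)l}(s_M)},s_M)$ with the $h_{il}$ holomorphic and bounded on the disc $|s_M|<r_l$ (which lies in their domain once we arrange, as we may, that the closed discs $\overline{\mathscr D_j}$ are pairwise disjoint), one finds that along such a leaf $|x_1|\le C|s_1^0|e^{\re(\nu_{1l}\tau)}$, that $|x_i|\le C|s_1^0s_i^0|e^{\re((\nu_{1l}+\nu_{il})\tau)}$ for $2\le i\le M-1$, and that $|x_M|\le C|s_1^0|\big(|b_l|+|s_M^0|e^{\re\tau}\big)e^{\re(\nu_{1l}\tau)}$, while $t_M=b_l+s_M^0e^{\tau}$ exactly. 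This is precisely why the numbers $\nu_{1l}$ and $\nu_{1l}+\nu_{il}$ occur in the hypothesis.

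Next I would use the separating line. Its existence is equivalent to that of a $\delta\in\mathbb C^{*}$ with $\re\delta>0$, $\re(\delta\nu_{1l})<0$ and $\re\big(\delta(\nu_{1l}+\nu_{il})\big)<0$ for $i=2,\dots,M-1$; put $c_0=\re\delta>0$ and let $-c_1<0$ be the largest of these $M-1$ negative reals. Given $w\in\Omega_l$, write it in $s$--coordinates as $(s_1^0,\dots,s_M^0)\in(\mathbb C^{*})^{M}$ with $|s_M^0|=|t_M(w)-b_l|<r_l$, and let $\Phi\colon[0,T]\to\widehat{\mathbb C^{M}}$ be the integral curve obtained from the leaf parametrization above with $\tau=t\delta$, where $T=\tfrac1{c_0}\log\!\big(r_l/|s_M^0|\big)$ is chosen so that $t_M(\Phi(T))$ lies on $\partial\mathscr D_l$. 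Along $\Phi$ one has $\re(\nu_{1l}\tau)\le-c_1t\le0$ and $\re((\nu_{1l}+\nu_{il})\tau)\le-c_1t\le0$, so the estimates above show that $\Phi$ stays in a ball about the origin whose radius is a fixed multiple of $\|w\|$; in particular the endpoint $w'=\Phi(T)$ has $t_1(w')\cdots t_{M-1}(w')\ne0$, $t_M(w')\in\partial\mathscr D_l\subset\mathscr K$, and $\|w'\|$ as small as desired, so $w'\in\Omega_\mathscr K$ once $\|w\|$ is small enough.

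The step I expect to be the main obstacle is the length bound for $\Phi$: although $T\to\infty$ as $w\to p_l$, the length $\ell(\Phi)$ must still tend to $0$ with $\|w\|$. For $x_1,\dots,x_{M-1}$ this is clear, since their derivatives are dominated by a constant times $\|w\|\,e^{-c_1t}$ and hence contribute at most a constant times $\|w\|$ to $\int_0^T|\Phi'|$. For $x_M$, which up to a bounded factor equals $b_l\,x_1+s_1^0s_M^0e^{(\nu_{1l}+1)\tau}$, the first summand behaves like $x_1$, and for the second one integrates $C|s_1^0s_M^0|e^{\beta t}$ over $[0,T]$, where $\beta=\re(\delta\nu_{1l})+c_0$; using $e^{c_0T}=r_l/|s_M^0|$ and $e^{-c_1T}=(|s_M^0|/r_l)^{c_1/c_0}$ one checks that in each of the cases $\beta<0$, $\beta=0$ (here one invokes $\sup_{0<u<r_l}u\log(r_l/u)<\infty$) and $\beta>0$ (here one invokes $|s_M^0|^{c_1/c_0}\le\max(1,r_l)^{c_1/c_0}$) this is bounded by a constant times $\|w\|$. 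Hence there is $\eta_l>0$ such that $\|w\|<\eta_l$ forces both $\ell(\Phi)<\varepsilon/6$ and $\|w'\|<\eta_{\mathscr K}$; Lemma \ref{pila2} then joins $w'$ to a point of $\Sigma^{*}$ by an integral curve of length $<\varepsilon/6$, and the concatenation of this curve with $\Phi$ (both lie in the leaf through the regular point $w'$) is an integral curve from $w$ to $\Sigma^{*}$ of length $<\varepsilon/3$, so $w\in\mathscr U$. This proves $\Omega_l\subset\mathscr U$; the final sentence of the statement then follows because, by \eqref{nu4} of Lemma \ref{nuca} together with Remark \ref{remarknu}, the separating--line hypothesis holds automatically whenever $b_l\notin T$, that is, whenever $l\notin\{1,\dots,m\}$.
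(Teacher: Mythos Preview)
Your proposal is correct and follows the same geometric idea as the paper: for a point $\sigma\in\Omega_l$, flow along the leaf in a direction $\theta\in\mathbb C^{*}$ (your $\delta$) chosen so that $\re\theta>0$ while $\re(\theta\nu_{1l})<0$ and $\re\big(\theta(\nu_{1l}+\nu_{il})\big)<0$ for $i=2,\dots,M-1$; the $t_M$--coordinate then spirals outward to $\partial\mathscr D_l\subset\mathscr K$, the $x$--coordinates stay controlled by a fixed multiple of $\|\sigma\|$, the length of the arc is $O(\|\sigma\|)$, and one finishes with Lemma~\ref{pila2}.

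The only difference is bookkeeping. The paper never passes through the linearizing chart $h_l$: it prescribes the $t_M$--component directly as $\gamma(s)=b_l+(\sigma_M-b_l)e^{\theta_\sigma s}$ and lifts via the explicit holonomy formula $\gamma_i(s)=\sigma_i\exp\int_{\gamma|_{[0,s]}}A_i$, then estimates using the decomposition $A_i=\nu_{il}/(u-b_l)+g_i$ with $g_i$ bounded on $\overline{\mathscr D_l}$. Your route through $h_l$ produces the \emph{same} curve (since $h_l$ preserves the last coordinate), but it introduces the bounded factors $e^{h_{il}(s_M^0e^{\tau})}$. When you say the derivatives of $x_1,\dots,x_{M-1}$ are dominated by $C\|w\|e^{-c_1t}$, you are tacitly treating these factors as constants; differentiating them contributes extra terms of size $C|s_1^0 s_I^0 s_M^0|\,e^{(\re(\nu_{Il}\delta)+c_0)t}$, which are not $\le C\|w\|e^{-c_1t}$ in general. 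They are nevertheless harmless---they yield to exactly the case analysis you carry out for $x_M$---but they should be acknowledged. (Also, in your $\beta>0$ case the relevant exponent is $1-\beta/c_0=-\re(\delta\nu_{1l})/c_0>0$, not $c_1/c_0$; the conclusion $|s_M^0|e^{\beta T}\le r_l$ is still correct.) The paper's direct computation in $t$--coordinates sidesteps this wrinkle, which is what it buys; conceptually, the two arguments are identical.
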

\proof 
Recall that the vector field $Y$ defining $\mathcal{G}$
is given by
\begin{equation}\nonumber
Y=
\sum\limits_{i=1}^{M-1}A_i(t_M)t_i\frac{\partial}{\partial t_i}
+\frac{\partial}{\partial t_M},
\end{equation}
where \begin{equation}\nonumber
A_i(t_M)=
\sum\limits_{j=1}^N \frac{\nu_{ij}}{t_M-b_{j}}, \quad i=1,\dots M-1.
\end{equation}  Write
\begin{equation}\label{decomp1}
A_i(u)=\frac{\nu_{il}}{u-b_{l}}+ g_i(u),
\end{equation} 
where
\begin{equation}\nonumber
g_i(u)=
\sum\limits_{j=1,j\neq l}^N \frac{\nu_{ij}}{u-b_{j}}.
\end{equation}
Since $g_i$ is holomorphic on a neighborhood of  $\overline{\mathscr{D}_l}$ we can find  $K_1>1$ such that,
for each $i=1,\dots, M-1$, we have  
\begin{align}\label{cotak1}|g_i(u)|\le K_1,\quad u\in \overline{\mathscr{D}_l}, \quad \textrm{ and}
\end{align}
\begin{align}\label{percha}|A_i(u)|\le \psi(u)\colon = K_1+ \frac{K_1}{|u-b_l|},\; u\in \overline{\mathscr{D}_l}-\{b_l\}.
\end{align}
 It follows from  \eqref{nu4} of Lemma \ref{nuca} that there exists $\theta\in\mathbb{C}^*$ such that  
\begin{align}\label{tetacon} \begin{aligned}{}& \hspace{3.2cm} \re(\theta)>0,\quad \textrm{ and}\\
&\re \big(\theta \nu_{1l}\big),  \re \big(\theta[\nu_{1l}+\nu_{2l}]\big), \dots,  \re \big(\theta [\nu_{1l}+\nu_{(M-1)l}]\big) <0.\end{aligned}
\end{align} 
Set $\Gamma\colon=\Big \{1, \nu_{1l}, \big(\nu_{1l}+\nu_{2l}\big), \dots, \big(\nu_{1l}+\nu_{(M-1)l}\big)\Big\}$
and
\begin{align}\label{cotak2}
  K_2\colon =\max\limits_{\zeta\in \Gamma}\frac{2K_1 (r_l+1)|\theta|e^{\frac{2K_1r_l|\theta|}{\re \theta}}}{|\re(\zeta\theta)|},
   \end{align} 
 and take $\eta_l>0$ such that 
\begin{align}\label{crv}
\hspace{1cm}\eta_l(M+|b_l|+r_l)e^{\frac{2K_1r_l|\theta|}{\re \theta}}
<\eta_\mathscr{K} ,\;\textrm{ and }
\end{align}
\begin{align}\label{crv1}
\eta_l (M+|b_l|+r_l)K_2<\frac{\varepsilon}{6}.
\end{align} 
Now, fix a point $\sigma=(\sigma_1,\dots,\sigma_M)\in\Omega_l.$
Thus \begin{align}\label{borrador}
\|\sigma\|<\eta_l, \;\; 0<|\sigma_M-b_l|< r_l,\; \;\sigma_1\cdots \sigma_{M-1}\neq 0.
\end{align}
Set 
\begin{align}\label{tesi}\theta_\sigma\colon=\frac{\log \left(\frac{r_l}{|{\sigma_M-b_l}|}\right)\theta}{\re(\theta)}\end{align}
and consider the curve 
\begin{align}
\gamma(s)=b_l+(\sigma_{M}-b_l)e^{\theta_\sigma s}, \quad s\in[0,1].
\end{align}
It is easy to check that 
\begin{align}\label{control1}\gamma(0)={\sigma_M}, \quad  \gamma(1)\in{\partial \mathscr{D}_l}.\end{align}
As we have seen --- see \eqref{solu1} --- the curve 
\begin{equation}\gamma^\sigma (s)\colon= \left(\gamma_1(s),\dots, \gamma_{M-1}(s),\gamma(s) \right),\quad s\in[0,1],\end{equation}
where
 \begin{equation}\label{experto}\gamma_i(s)=\sigma_i\exp{\int\limits_{\gamma |_{[0,s]}}A_i(u)du },\quad  i=1,\dots,M-1, \end{equation}
  is an integral curve connecting $\sigma$ with 
  \begin{equation}\gamma^\sigma (1)=\left(\gamma_1(1),\dots, \gamma_{M-1}(1),\gamma(1) \right). \end{equation}
  Let us prove that $\gamma^\sigma(1)\in\Omega_\mathscr{K}$. Firstly, since 
  $\partial \mathscr{D}_l\subset \mathscr{K}$, it follows from \eqref{control1} that $\gamma(1)\in\mathscr{K}$. 
  Secondly,
   since the hyperplanes $$\{t_1=0\},\dots,\{t_{M-1}=0\}$$ are invariant by $\mathcal{G}$ and none of these hyperplanes contains $\sigma$, we 
conclude   $$\gamma^\sigma(1)\notin \{t_1\cdots t_{M-1}=0\}.$$
Thus it is enough to prove  that $\|\gamma^\sigma(1)\|<\eta_\mathscr{K}$.

 From \eqref{decomp1} we have
 
 \begin{align}\nonumber
\int\limits_{\gamma |_{[0,s]}}A_i(u)&=\int\limits_{\gamma |_{[0,s]}}\frac{\nu_{il}}{u-b_{l}}du+ 
\int\limits_{\gamma |_{[0,s]}}g_i(u)du\\
&=\int\limits_{0}^s\frac{\nu_{il}}{\gamma(\mathfrak{s})-b_{l}}\gamma'(\mathfrak{s})d\mathfrak{s}+ 
\int\limits_{0}^s g_i(\gamma(\mathfrak{s}))\gamma'(\mathfrak{s})d\mathfrak{s},
\end{align}
so that 
\begin{align}\label{asca}
\int\limits_{\gamma |_{[0,s]}}A_i(u)=(\nu_{il}\theta_\sigma)s+ 
\int\limits_{0}^s g_i(\gamma(\mathfrak{s}))\gamma'(\mathfrak{s})d\mathfrak{s}.
\end{align}
Let us show that the function $$B_i(s)\colon=\int\limits_{0}^s g_i(\gamma(\mathfrak{s}))\gamma'(\mathfrak{s})d\mathfrak{s}$$
has a bound independent of  $\sigma$. From \eqref{cotak1},
$$|B_i(s)|\le K_1\int\limits_{0}^s|\gamma'(\mathfrak{s})|d\mathfrak{s},$$ so it suffices to show that the length
 $\ell(\gamma)=\int\limits_{0}^1|\gamma'(\mathfrak{s})|d\mathfrak{s}$, has a bound independent of $\sigma$. 
 We have
\begin{align}\int\limits_{0}^1 |\gamma'(\mathfrak{s})|d\mathfrak{s} &= \int\limits_{0}^1 \left|\theta_\sigma{(\sigma_M-b_l)}
e^{\theta_\sigma\mathfrak{s}}\right|d\mathfrak{s}
= \int\limits_{0}^1 |\theta_\sigma {(\sigma_M-b_l)}|
e^{\re(\theta_\sigma)\mathfrak{s}}d\mathfrak{s},\\
&{}\hspace{-0.2cm} \int\limits_{0}^1 |\gamma'(\mathfrak{s})|d\mathfrak{s} =  \frac{|\theta_\sigma|| {\sigma_M-b_l}|
}{\re \theta_\sigma}\cdot (e^{\re(\theta_\sigma)}-1)
\end{align}
Thus, since from \eqref{tesi} $$|\theta_\sigma|=\frac{|\theta|}{|\re \theta|}\log\frac{r_l}{|{\sigma_M-b_l}|}\textrm{  and }
 \re \theta_\sigma=\log \frac{r_l}{|{\sigma_M-b_l}|},$$ we have 
\begin{align}\int\limits_{0}^1 |\gamma'(\mathfrak{s})|d\mathfrak{s} & =
\frac{|\theta|}{|\re \theta|}|\sigma_M-b_l|\left(\frac{r_l}{|\sigma_M-b_l|}-1\right)
\end{align}
so that 
\begin{align}\label{longama}\int\limits_{0}^1 |\gamma'(\mathfrak{s})|d\mathfrak{s}<\frac{r_l|\theta|}{|\re \theta|}. 
\end{align} Therefore 
\begin{align}|B_i(s)|\le \frac{K_1 r_l|\theta|}{|\re \theta|}, \quad  i=1,\dots, M-1
\end{align}
and from \eqref{asca} we obtain
\begin{align}\label{asca1}
\re\int\limits_{\gamma |_{[0,s]}}A_i(u)\le \re(\nu_{il}\theta_\sigma)s + \frac{K_1r_l|\theta|}{|\re \theta|}.
\end{align}
Then, since 
\begin{align}
 |\gamma_i(s)|=\left|\sigma_i\exp{\int\limits_{\gamma |_{[0,s]}}A_i(u)du } \right|
  =
 |\sigma_i|\exp{\re\int\limits_{\gamma |_{[0,s]}}A_i(u)du },
\end{align}
we obtain 
\begin{align}\label{tacho}
 |\gamma_i(s)|
 \le |\sigma_i|e^{\frac{K_1r_l|\theta|}{|\re \theta|}}e^{\re(\nu_{il}\theta_\sigma)s}.
\end{align}
On the other hand, 
from 
  \eqref{tetacon}, \eqref{tesi} and the inequality $|{\sigma_M-b_l}|<r_l$ we obtain that  
  \begin{align}\label{negativos}\re \big( \nu_{1l}\theta_\sigma\big),  \re \big([\nu_{1l}+\nu_{2l}]\theta_\sigma\big), \dots, 
   \re \big( [\nu_{1l}+\nu_{(M-1)l}]\theta_\sigma\big) <0.  \end{align}
  Then, from this and \eqref{tacho} we have
  \begin{gather}\label{beoplay}
 \begin{aligned}
 |\gamma_1|
 &\le |\sigma_1|e^{\frac{K_1r_l|\theta|}{|\re \theta|}}e^{\re(\nu_{1l}\theta_\sigma)s}\le |\sigma_1|e^{\frac{K_1r_l|\theta|}{|\re \theta|}},
 \\
|\gamma_1\gamma_2| &\le  |\sigma_1\sigma_2|e^{2\frac{K_1r_l|\theta|}{|\re \theta|}}
e^{\re\left([\nu_{1l}+\nu_{2l}]\theta_\sigma\right)s}
\le |\sigma_1\sigma_2|e^{2\frac{K_1r_l|\theta|}{|\re \theta|}},\\
&\hspace{2.5mm}\vdots \\
|\gamma_1\gamma_{M-1}| &\le  |\sigma_1\sigma_{M-1}|e^{2\frac{K_1r_l|\theta|}{|\re \theta|}}
e^{\re\left([\nu_{1l}+\nu_{(M-1)l}]\theta_\sigma\right)s}
\le |\sigma_1\sigma_{M-1}|e^{2\frac{K_1r_l|\theta|}{|\re \theta|}}.
\end{aligned}
\end{gather}
Therefore
\begin{align*}
\|\gamma^\sigma(1))\|&\le |\gamma_1(1)|+|\gamma_1(1)\gamma_2(1)|+\dots +|\gamma_1(1)\gamma_{M-1}(1)|+
|\gamma_1(1)\gamma (1)|
\\
&\le |\sigma_1|e^{\frac{K_1r_l|\theta|}{|\re \theta|}}+
|\sigma_1\sigma_2|e^{\frac{2K_1r_l|\theta|}{|\re \theta|}} +\dots 
\\
& \hspace{2.55cm} \dots +|\sigma_1\sigma_{M-1}|e^{\frac{2K_1r_l|\theta|}{|\re \theta|}} +
|\sigma_1|e^{\frac{K_1r_l|\theta|}{|\re \theta|}}(|b_l|+r_l)
\\
&\le \|\sigma\|\left(e^{\frac{K_1r_l|\theta|}{|\re \theta|}}+(M-2)e^{\frac{2K_1r_l|\theta|}{|\re \theta|}}+e^{\frac{K_1r_l|\theta|}{|\re \theta|}}(|b_l|+r_l)\right)
\\
&\le \|\sigma\|(M+|b_l| +r_l)e^{\frac{2K_1r_l|\theta|}{|\re \theta|}},
\end{align*} so that, by \eqref{borrador} and \eqref{crv}, 
$$\|\gamma^\sigma(1)\|<\eta_\mathscr{K},$$ which  shows that $\gamma^\sigma(1)\in\Omega_\mathscr{K}$. 
Since Lemma \ref{pila2} guarantees that $\gamma^\sigma(1)$ is connected to a point in $\Sigma^*$ 
by an integral curve of length smaller than $\varepsilon /6$, and $\gamma^\sigma$ connects $\sigma$ with 
$\gamma^\sigma(1)$, Lemma \ref{pila3} will be proved if we show that 
$\ell(\gamma^\sigma)<\varepsilon/6$. To see this we write
 \begin{align} \label{el1}\begin{aligned}
\ell (\gamma^\sigma)=\int\limits_{0}^{1}|(\pi\circ\gamma^\sigma)'(s)|ds  
\le \int\limits_{0}^{1}|\gamma_1'|ds+\int\limits_{0}^{1}|(\gamma_1\gamma_2)'|ds+\dots \\
\dots +
\int\limits_{0}^{1}|(\gamma_1\gamma_{M-1})'|ds+
\int\limits_{0}^{1}|(\gamma_1\gamma)')|ds.
\end{aligned}
\end{align}
Let us find bounds for each of the integrals above.
From \eqref{experto} and \eqref{percha}, we have 
\begin{gather}\label{lunes}
\begin{aligned}
 \int\limits_{0}^{1}|\gamma_1'|ds=\int\limits_{0}^{1}  |A_1(\gamma)\gamma'\gamma_1|ds\le
  \int\limits_{0}^{1}  |\psi(\gamma)\gamma'\gamma_1|ds. 
 \end{aligned}
\end{gather}
Observe that, since $\re\theta_\sigma=\log\frac{r_l}{|{\sigma_M-b_l}|}>0$,
\begin{align*}
|\psi (\gamma)\gamma'|&= \left(K_1  +\frac{K_1}{|\gamma-b_l|}\right)    |\gamma' |
\\
&=  \left(K_1+\frac{K_1}{ |{\sigma_M-b_l}|e^{\re \theta_\sigma s}}\right)   | \theta_\sigma| |{\sigma_M-b_l}|e^{\re \theta_\sigma s}
\\
&\le \Big(K_1|{\sigma_M-b_l}|e^{\re \theta_\sigma s}+K_1\Big) |\theta_\sigma|
\\
& \le \Big(K_1|{\sigma_M-b_l}|e^{\log\frac{r_l}{|{\sigma_M-b_l}|} }+K_1\Big) |\theta_\sigma|,
  \end{align*}
  so 
  \begin{align}\label{psi} |\psi (\gamma)\gamma'|  \le K_1(r_l+1)|\theta_\sigma|.
  \end{align}
Moreover, from \eqref{negativos} we have $\re(\nu_{1l}\theta_\sigma)<0$ and it follows from \eqref{tesi}   that 
$$\frac{|\theta_\sigma|}{|\re \nu_{1l}\theta_\sigma|}=\frac{|\theta|}{|\re \nu_{1l}\theta|}.
$$
Using these facts together with \eqref{psi} and  \eqref{tacho}
 in \eqref{lunes}, we obtain
  \begin{gather}\nonumber
\begin{aligned}
 \int\limits_{0}^{1}|\gamma_1'|ds\le
  \int\limits_{0}^{1}  |\psi(\gamma)\gamma'\gamma_1|ds\le K_1(r_l+1)|\theta_\sigma| \int\limits_{0}^{1} |\sigma_1|e^{\frac{K_1r_l|\theta|}{|\re \theta|}}e^{\re(\nu_{1l}\theta_\sigma)s}\\
 =\frac{ K_1(r_l+1)|\theta_\sigma||\sigma_1|e^{\frac{K_1r_l|\theta|}{|\re \theta|}}}{|\re(\nu_{1l}\theta_\sigma)|}\left({1-e^{\re(\nu_{1l}\theta_\sigma)}}\right) \\
 \le  
 \frac{ K_1(r_l+1)|\theta| |\sigma_1| e^{\frac{K_1r_l|\theta|}{|\re \theta|}}}{|\re(\nu_{1l}\theta)|},
 \end{aligned}
\end{gather}
  so that, 
  \begin{align}\label{martes} \int\limits_{0}^{1}|\gamma_1'|ds\le K_2 \|\sigma\|,
  \end{align} 
  were $K_2$ is as defined in \eqref{cotak2}.

Now, for each $i=2,\dots, M-1$, from \eqref{beoplay} we have 
\begin{align}
|\gamma_1\gamma_i| &\le  \|\sigma\|e^{2\frac{K_1r_l|\theta|}{|\re \theta|}}
e^{\re\left([\nu_{1l}+\nu_{2l}]\theta_\sigma\right)s}.
\end{align}
 This together with \eqref{experto} and \eqref{psi} leads to
 \begin{align} 
 \nonumber \int\limits_{0}^{1}|(\gamma_1\gamma_i)'|ds &= \int\limits_{0}^{1}|\gamma_1'\gamma_i+\gamma_i'\gamma_1|ds
 =
  \int\limits_{0}^{1}\Big|A_1(\gamma)\gamma'\gamma_1\gamma_i+A_i(\gamma)\gamma'\gamma_i\gamma_1\Big|ds
  \\
  \nonumber  & \le \int\limits_{0}^{1}2|\psi(\gamma)\gamma'\gamma_1\gamma_i|ds
  \\
\nonumber   &\le \int\limits_{0}^{1}2K_1(r_l+1)|\theta_\sigma|\|\sigma\| e^{2\frac{K_1r_l|\theta|}{|\re \theta|}}
e^{\re\left([\nu_{1l}+\nu_{2l}]\theta_\sigma\right)s}  ds 
\\ 
\label{tecla}
 &\le2 K_1(r_l+1)|\theta_\sigma|\|\sigma\| e^{2\frac{K_1r_l|\theta|}{|\re \theta|}}
\frac{e^{\re\left([\nu_{1l}+\nu_{2l}]\theta_\sigma\right)}-1 }{\re\left([\nu_{1l}+\nu_{2l}]\theta_\sigma\right)}.
 \end{align}
Again by \eqref{negativos} and \eqref{tesi},  we see that  $\re\left([\nu_{1l}+\nu_{2l}]\theta_\sigma\right)<0$ and 
$$\frac{|\theta_\sigma|}{|\re\left([\nu_{1l}+\nu_{2l}]\theta_\sigma\right)|}=\frac{|\theta|}{|\re\left([\nu_{1l}+\nu_{2l}]\theta\right)|}.
$$
Thus, from \eqref{tecla} we obtain

 \begin{align}
 \int\limits_{0}^{1}|(\gamma_1\gamma_i)'|ds
 & \le \frac{2K_1(r_l+1)|\theta_\sigma|\|\sigma\| e^{2\frac{K_1r_l|\theta|}{|\re \theta|}}}  {  |\re\left([\nu_{1l}+\nu_{2l}]\theta_\sigma\right)| }   
\left({1- e^{\re\left([\nu_{1l}+\nu_{2l}]\theta_\sigma\right)s} } \right)
\\
& \le \frac{2K_1(r_l+1)|\theta|\|\sigma\| e^{2\frac{K_1r_l|\theta|}{|\re \theta|}}}  {  |\re\left([\nu_{1l}+\nu_{2l}]\theta\right)| },   
 \end{align}
so that 
\begin{align}\label{miercoles}
 \int\limits_{0}^{1}|(\gamma_1\gamma_i)'|ds
 \le K_2\|\sigma\| ,\quad i=2,\dots, M-1.
 \end{align}

On the other hand,
\begin{gather}\nonumber
\begin{aligned}
\int|(\gamma_1\gamma)'|ds=\int |\gamma_1'\gamma+\gamma_1\gamma'|ds\le \int|\gamma_1'||\gamma|ds+\int|\gamma_1||\gamma'|ds.
\end{aligned}
\end{gather}

Then, from \eqref{beoplay},  \eqref{martes} and \eqref{longama}  we have 
\begin{gather}\nonumber
\begin{aligned}
\int|(\gamma_1\gamma)'|ds &\le  \int_0^1|\gamma_1'|(|b_l|+r_l) ds+\int_0^1 e^{\frac{K_1r_l|\theta|}{|\re \theta|}} \|\sigma\| |\gamma'|ds
\\
&  \le  K_2\|\sigma\|(|b_l|+r_l) +  e^{\frac{K_1r_l|\theta|}{|\re \theta|}}  \|\sigma\| \int_0^1 |\gamma'|ds
\\
  & \le  K_2\|\sigma\| (|b_l|+r_l) +  e^{\frac{K_1r_l|\theta|}{|\re \theta|}} \frac{r_l|\theta|}{|\re \theta|}\|\sigma\|.
\end{aligned}
\end{gather}
Thus, since  the inequality $K_1>1$ implies
$$e^{\frac{K_1r_l|\theta|}{|\re \theta|}} \frac{r_l|\theta|}{|\re \theta|}\le K_2,$$
we obtain 
\begin{align}\label{canasta3}
\int|(\gamma_1\gamma)'|ds
\le K_2(|b_l|+r_l+1) \|\sigma\|.
\end{align}
Using  \eqref{canasta3}, \eqref{miercoles}  and \eqref{martes}  in \eqref{el1}, we obtain  
 \begin{align*}
\ell (\gamma^\sigma) &\le \int|\gamma_1'|ds+\int|(\gamma_1\gamma_2)'|ds+\dots +\int|(\gamma_1\gamma_{M-1})'|ds+
\int|(\gamma_1\gamma)')|ds\\
&\le K_2\|\sigma\|+\dots +K_2\|\sigma\| +K_2(|b_l|+r_l+1) \|\sigma\|
\\
 &\le K_2(M+|b_l|+r_l)\|\sigma\|
\end{align*} and it follows from \eqref{borrador} and  \eqref{crv1} that
\begin{align*}
\ell (\gamma^\sigma)<\frac{\varepsilon}{6}.
\end{align*}
Lemma \ref{pila3} is proved. \qed \\

 Now, we want to establish a result analogous to Lemma \ref{pila3}, but near  $t_M=\infty$. To do so, 
 we consider coordinates $(s_1,\dots, s_M)$ in $\pi^{-1}(\mathbb{C}^M)$  such that  
$$t=\phi(s)\colon =(s_1s_M,\frac{s_2}{s_M},\dots, \frac{s_{M-1}}{s_M},\frac{1}{s_M}).$$
Note that 
$$\pi(\phi(s))=(s_1s_M,s_1{s_2},\dots, s_1s_{M-1},s_1).$$
Thus, if we define \begin{align}
 \label{futon1}\tilde{\pi}\colon (s_1,\dots, s_m)\mapsto (\tilde{x}_1,\dots,\tilde{x}_M)= (s_1,s_1s_2,\dots,s_1s_M),
 \end{align} we can see that 
 $$|\tilde{\pi}(s)|=|\pi(\phi(s))|.$$ Therefore it will be consistent to define 
 $$\|s\|\colon=|(s_1,s_1s_2,\dots, s_1 s_M)|.$$

  Let $R$ be  as before --- see \eqref{artesco} ---  and consider the disc 
 
$$\tilde{\mathscr{D}}\colon =\{s_M\in \mathbb{C}\colon |s_M|\le R\}.$$ 
 
 \begin{lem}\label{pila4}There exists $\tilde{\eta}>0$ such that $\mathscr{U}$ contains  the set 
$$\tilde{\Omega}\colon=\Big\{s\in\mathbb{C}^M\colon s_1\cdots s_{M-1}s_M\neq 0,\; s_M\in \tilde{\mathscr{D}}, \;
\|s\|<\tilde{\eta}\Big\}.$$ 
\end{lem}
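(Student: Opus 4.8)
The plan is to follow exactly the same strategy as in the proof of Lemma \ref{pila3}, but carried out in the chart $(s_1,\dots,s_M)$ near the invariant hyperplane $\{s_M=0\}$ (which corresponds to $t_M=\infty$). First I would write the vector field $Y$ of \eqref{systemsol} in the coordinates $t=\phi(s)$. A direct computation, using $t_i = s_i/s_M$ for $i=2,\dots,M-1$, $t_1 = s_1 s_M$ and $t_M = 1/s_M$, shows that after multiplying by an invertible holomorphic factor the foliation $\mathcal{G}$ near $\{s_M=0\}$ is generated by a vector field of the form
\begin{equation*}
\widetilde{Y}= \sum_{i=1}^{M-1}\Big(\frac{\tilde\nu_i}{s_M}+\tilde g_i(s_M)\Big)s_i\frac{\partial}{\partial s_i}+\frac{\partial}{\partial s_M},
\end{equation*}
where the residues at $s_M=0$ are precisely the numbers $\tilde\nu_1,\dots,\tilde\nu_{M-1}$ of \eqref{nu5} of Lemma \ref{nuca} (this is where one checks that $\{s_M=0\}$, i.e. $\{x_1=0\}$, is invariant, completing the pending verification in the proof of \eqref{con4a}), and the $\tilde g_i$ are holomorphic on a neighborhood of $\overline{\tilde{\mathscr{D}}}$. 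So the local picture at $\{s_M=0\}$ is formally identical to the local picture at a singularity $p_l$ with $b_l=0$, $\nu_{il}$ replaced by $\tilde\nu_i$, and the disc $\mathscr{D}_l$ replaced by $\tilde{\mathscr{D}}$.

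Next, using the separation hypothesis \eqref{nu5} of Lemma \ref{nuca} — there is a real line through $0$ separating $1$ from $\{\tilde\nu_1,\tilde\nu_1+\tilde\nu_2,\dots,\tilde\nu_1+\tilde\nu_{M-1}\}$ — I would choose $\theta\in\mathbb{C}^*$ with $\re\theta>0$ and $\re(\theta\tilde\nu_1),\re(\theta[\tilde\nu_1+\tilde\nu_2]),\dots,\re(\theta[\tilde\nu_1+\tilde\nu_{M-1}])<0$, exactly as in \eqref{tetacon}. Then for a fixed point $\sigma=(\sigma_1,\dots,\sigma_M)\in\tilde\Omega$ I would take the curve $\gamma(r)=\sigma_M e^{\theta_\sigma r}$, $r\in[0,1]$, with $\theta_\sigma=\dfrac{\log(R/|\sigma_M|)\,\theta}{\re\theta}$, so that $\gamma(0)=\sigma_M$ and $\gamma(1)\in\partial\tilde{\mathscr{D}}$. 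Lifting $\gamma$ to an integral curve $\gamma^\sigma$ of $\widetilde{Y}$ via the explicit formula analogous to \eqref{experto}, the same estimates as in Lemma \ref{pila3} — bounding $\int\tilde g_i(\gamma)\gamma'$ by the length of $\gamma$, which is bounded independently of $\sigma$ by \eqref{longama}, and using $\re(\tilde\nu_1\theta_\sigma)<0$ etc. — give that $\|\gamma^\sigma(1)\|$ is bounded by $\|\sigma\|$ times a constant depending only on $R$ and the $\tilde\nu_i$, while $\ell(\gamma^\sigma)$ is similarly bounded by a constant times $\|\sigma\|$. Choosing $\tilde\eta>0$ small enough, one arranges that $\gamma^\sigma(1)$ lands in a region where one can already connect to $\Sigma^*$ by a short integral curve.

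The one point that needs a little care is the analogue of the passage to $\Omega_\mathscr{K}$: $\gamma^\sigma(1)$ lies over the circle $\{|s_M|=R\}$, i.e. over $\{|t_M|=1/R\}$ after $\pi$, which — since $\tilde{\mathscr{D}}$ was chosen with $R$ as in \eqref{artesco} so that all the $b_j$ lie in $\{|u|<R\}$ — corresponds to a point whose $t_M$-coordinate has modulus $<1/R$, hence lies in a small disc around $t_M=0$; but $t_M=0$ may or may not be one of the $b_j$. Here I would note that the separation hypothesis for $\tilde\nu_i$ is the $t_M=\infty$ counterpart, and by Lemma \ref{pila1}/Lemma \ref{pila2} (applied to a compact disc, or to $\mathscr{K}$, avoiding $\{b_1,\dots,b_N\}$) together with the already established Lemma \ref{pila3} for the remaining small disc if $0\in\{b_1,\dots,b_N\}$, the point $\gamma^\sigma(1)$ — being off all invariant hyperplanes and close to but not on $\{s_M=0\}$ — is connected to $\Sigma^*$ by an integral curve of length $<\varepsilon/6$. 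Concatenating $\gamma^\sigma$ with this curve and keeping total length $<\varepsilon/3<\epsilon$ shows $\sigma\in\mathscr{U}$, which is the claim. The main obstacle, such as it is, is purely bookkeeping: writing $\widetilde{Y}$ correctly in the $\phi$-coordinates and verifying that its residues at $\{s_M=0\}$ are exactly the $\tilde\nu_i$ of \eqref{nu5}, so that the separation hypothesis built into Lemma \ref{nuca} is the right one to invoke; once that identification is made, the estimates are a verbatim repeat of Lemma \ref{pila3}.
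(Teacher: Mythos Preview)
Your overall plan is exactly the paper's: pull $Y$ back to the chart $(s_1,\dots,s_M)$, identify the residues at $s_M=0$ as the $\tilde\nu_i$ of \eqref{nu5}, invoke the separation hypothesis there, and repeat the estimates of Lemma~\ref{pila3} verbatim. One preparatory point the paper spells out and you leave implicit: the map $\pi\circ\phi\circ\tilde\pi^{-1}$ is the coordinate permutation $(\tilde x_1,\dots,\tilde x_M)\mapsto(\tilde x_M,\tilde x_2,\dots,\tilde x_{M-1},\tilde x_1)$, hence a Euclidean isometry. This is what guarantees that $\|s\|$ and $\ell(\cdot)$ computed via $\tilde\pi$ in the $s$-chart agree with the original norm and length, so that the $|\gamma_1|,|\gamma_1\gamma_i|,|\gamma_1\gamma|$ estimates of Lemma~\ref{pila3} really do carry over line by line.

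Where your argument goes off is the landing step. The radius of $\tilde{\mathscr{D}}$ must be read as $1/R$, not $R$: this is forced by Remark~\ref{rempila}, since $s_M\in\tilde{\mathscr{D}}\setminus\{0\}$ has to translate to $|t_M|>R$; the displayed ``$\le R$'' is a slip. With radius $1/R$ the exit point has $|s_M|=1/R$, i.e.\ $|t_M|=R\in\partial\mathscr{D}_R\subset\mathscr{K}$, and Lemma~\ref{pila2} applies directly to give a connecting curve of length $<\varepsilon/6$; the total is $<\varepsilon/3$ and $\sigma\in\mathscr{U}$. Your reading (radius $R$, exit at $|t_M|=1/R$) forces the detour through Lemma~\ref{pila3}, and that detour does not close: first, Lemma~\ref{pila3} is unavailable at the points $b_l\in T$ (no separation hypothesis there), and nothing prevents the circle $|t_M|=1/R$ from meeting such a $\mathscr{D}_l$; second, even when it is available, Lemma~\ref{pila3} only produces a path of length $<\varepsilon/3$ to $\Sigma^*$, so concatenation gives $<\varepsilon/2$, which does not place $\sigma$ in $\mathscr{U}$ (membership requires $<\varepsilon/3$). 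Once the radius is read correctly the workaround is unnecessary and the proof really is the verbatim repeat you describe.
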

\begin{rem}\label{rempila} Note that in the coordinates $(t_1,\dots,t_M)$ the set $\tilde{\Omega}$ is given by 
$$\tilde{\Omega}=\Big\{t\in\mathbb{C}^M\colon t_1\cdots t_{M-1}\neq 0,\; |t_M|>R, \;
\|t\|<\tilde{\eta}\Big\}.$$ 
\end{rem}
 
\proof 
We can check that $\pi\circ \phi \circ \tilde{\pi}^{-1} $  is given by
\begin{align} 
\label{futon2}(\tilde{x}_1, \dots, \tilde{x}_M)\mapsto (x_1,\dots,x_M)=(\tilde{x}_M,\tilde{ x}_2,\dots,\tilde{x}_{M-1}, \tilde{x}_1),\end{align}
which is an  Euclidean isometry. This means that, given a curve $\alpha$  in the coordinates $(s_1,\dots, s_M)$,
the Euclidean length of $\tilde{\pi}(\alpha)$ coincides with the Euclidean length of $\pi(\phi(\alpha))$, that is,
we can define $\ell (\alpha)$ as the length of  $\tilde{\pi}(\alpha)$ and we will have $\ell (\alpha)=\ell(\phi(\alpha))$. 
Thus, we are able to work in the coordinates $(s_1,\dots, s_M)$  following the same steps of the proof of Lemma \ref{pila3}.
Let us do that. 
In the coordinates $(s_1,\dots, s_M)$, the foliation $\mathcal{G}$ is generated by the rational vector field
$$\tilde{Y}\colon=-\frac{1}{s_M^2}\phi^*(Y),$$ which --- by a straightforward computation ---  is expressed in the form
\begin{equation}\nonumber
\tilde{Y}=
\sum\limits_{i=1}^{M-1}\tilde{A}_i(s_M)s_i\frac{\partial}{\partial s_i}
+\frac{\partial}{\partial s_M},
\end{equation}
where 

\begin{align*}
\tilde{A}_i(s_M)&= \left(\frac{\tilde{\nu}_i}{s_M}+\tilde{g}_i(s_M)  \right),\quad i=1,\dots, M-1,\\
\tilde{\nu}_1& =-1-\sum\limits_{j=1}^N \nu_{ij},\\
\tilde{\nu}_2& =1-\sum\limits_{j=1}^N \nu_{2j},\\
&\hspace{2.3mm}\vdots\\
\tilde{\nu}_{M-1}&=1-\sum\limits_{j=1}^N \nu_{(M-1)j},\\
\end{align*}
and $\tilde{g}_1,\dots,\tilde{g}_{M-1}$ are holomorphic on a neighborhood $\tilde{\mathscr{D}}$. 
Now,  we choose  $\tilde{K}_1>1$ such that,
for each $i=1,\dots, M-1$, we have  
\begin{align}\nonumber\hspace{0.5cm}|\tilde{g}_i(u)|\le \tilde{K}_1,\quad  u\in {\tilde{\mathscr{D}}}, \;\textrm{ and}
\end{align}
\begin{align}\nonumber \hspace{0.5cm}|\tilde{A}_i(u)|\le \tilde{\psi}(u)\colon = \tilde{K}_1+ \frac{\tilde{K}_1}{|u|},\quad  u\in {\tilde{\mathscr{D}}}.
\end{align}
From  \eqref{nu5} of Lemma \ref{nuca}, we can find $\tilde{\theta}\in\mathbb{C}$ such that  
\begin{align}\nonumber \begin{aligned}{}& \hspace{3.2cm} \re(\tilde{\theta})>0,\quad \textrm{ and}\\
&\re\big({\tilde{\theta}}\tilde{\nu}_1\big),  \re \big({\tilde{\theta}}[\tilde{\nu}_{1}+\tilde{\nu}_{2}]\big), \dots,  
 \re \big({\tilde{\theta}}[\tilde{\nu}_{1}+\tilde{\nu}_{M-1}]\big)<0.\end{aligned}
\end{align} 
Set $$\tilde{\Gamma}\colon= \{1, \tilde{\nu}_{1}, \big(\tilde{\nu}_{1}+\tilde{\nu}_{2}\big), \dots, \big(\tilde{\nu}_{1}+\tilde{\nu}_{(M-1)}\big)\}$$
and
\begin{align}\nonumber
  \tilde{K}_2\colon =\max\limits_{\zeta\in \tilde{\Gamma}}\frac{2\tilde{K}_1 (R+1)|\tilde{\theta}|e^{\frac{2\tilde{K}_1 R|\tilde{\theta}|}{\re \tilde{\theta}}}}{|\re(\zeta\tilde{\theta})|},
   \end{align} 
 and take $\tilde{\eta}>0$ such that 
\begin{align}\nonumber
\hspace{1cm}\tilde{\eta}(M+R)e^{\frac{2\tilde{K}_1R|\tilde{{\theta}}|}{\re \tilde{\theta}}}
<\eta_\mathscr{K} ,\;\textrm{ and }
\end{align}
\begin{align}\nonumber
\tilde{\eta} (M+R)\tilde{K}_2<\frac{\epsilon}{6}.
\end{align} 
From this point forward, the proof follows exactly as in Lemma \ref{pila3}, so we leave  the details to the reader.  \qed\\

\noindent\emph{Proof of \eqref{con3b} of Proposition \ref{log}}. 
Let $$\zeta\colon =[\mathfrak{x}_1:\dots : \mathfrak{x}_M]$$ be a point in the exceptional divisor  $\pi^{-1}(0)$  outside the tangent cone of $$\prod\limits_{\tau\in T}(x_M-\tau x_1)=0.$$ Suppose first that $\mathfrak{x}_1\neq 0$. Then the point $\zeta$ is given in the coordinates $(t_1,\dots, t_M)$
as
$$\zeta=\left(0, \frac{\mathfrak{x}_2}{\mathfrak{x}_1},\dots, \frac{\mathfrak{x}_M}{\mathfrak{x}_1}\right).$$
If $\frac{\mathfrak{x}_M}{\mathfrak{x}_1}\notin \{ b_1,\dots ,b_N\}$, 
we take a closed disc  $$D\subset \mathbb{C}\backslash\{b_1,\dots,b_N\}$$ centered at $ \frac{\mathfrak{x}_M}{\mathfrak{x}_1}$ and so, by Lemma
\ref{pila1},  the corresponding set $\Omega_D$ is contained in $\mathscr{U}$. Recall that we are identifying the sets $\mathscr{U}$ and $\pi^{-1}(\mathscr{U})$. Since in the statement of Proposition \ref{log} the notation $\mathscr{U}$ refers to the set before the blow-up, rigorously speaking
Lemma \ref{pila1} shows that $\Omega_D\subset \pi^{-1}(\mathscr{U})$.   Therefore, the set 
$\Omega_D \cup \pi^{-1}({S})$, which is a neighborhood of $\zeta$, is contained in  $\pi^{-1}(\mathscr{U}\cup S)$. 
If $\frac{\mathfrak{x}_M}{\mathfrak{x}_1}=b_l$ for some $l=1,\dots,N$ with $b_l\notin T$,  by Lemma \ref{pila3} we have 
\mbox{$\Omega_l\subset \pi^{-1}(\mathscr{U})$.} Therefore the set 
$\Omega_l \cup \pi^{-1}({S})$, which is a neighborhood of $\zeta$, is contained in  $\pi^{-1}(\mathscr{U}\cup S)$.

Suppose now that $\mathfrak{x}_1=0$. In this case $\mathfrak{x}_M\neq 0$, otherwise $\zeta$ belongs to the tangent cone above. 
Then, by \eqref{futon1} and \eqref{futon2} the point $\zeta$ is given in the coordinates $(s_1,\dots,s_M)$ as
$$\zeta =(0, \frac{\mathfrak{x}_2}{\mathfrak{x}_M},\dots, \frac{\mathfrak{x}_{M-1}}{\mathfrak{x}_M}, 
\frac{\mathfrak{x}_1}{\mathfrak{x}_M}).$$
Thus, by Lemma
\ref{pila4} we have $\tilde{\Omega}\subset \pi^{-1}(\mathscr{U})$. Therefore the set 
$\tilde{\Omega} \cup \pi^{-1}({S})$, which is a neighborhood of $\zeta$, is contained in  $\pi^{-1}(\mathscr{U}\cup S)$. \qed\\

\noindent\emph{Proof of \eqref{con3c} of Proposition \ref{log}.} 
Since $\tau\in T$ we have $\tau=b_l$ for some $l=1,\dots,m$. Thus, it follows from Lemma \ref{pila3} that 
$\Omega_l\subset \pi^{-1}(\mathscr{U})$. Therefore the set 
$\Omega_l \cup \pi^{-1}({S})$, which is contained in  $\pi^{-1}(\mathscr{U}\cup S)$, is a neighborhood of any point in
$\{t_1=0,t_M=\tau\}$. If the property about $\tau$ takes place for all $\tau\in T$, then the conclusion of
Lemma \ref{pila3} holds for each  $l=1,\dots,N$. This together with Lemma \ref{pila2} and Lemma \ref{pila4} leads to the inclusion
$$\Omega_1\cup\dots\cup \Omega_N\cup\Omega_\mathscr{K}\cup\tilde{\Omega}\subset \pi^{-1}(\mathscr{U}).$$ 
But if we take $$\eta\colon=\min\{ \eta_1,\dots,\eta_N, \eta_\mathscr{K},\tilde{\eta}\},$$
it follows from the definitions of the sets $\Omega_1,\dots, \Omega_N,\Omega_\mathscr{K},\tilde{\Omega}$ and Remark \ref{rempila} that 
$$\Omega_1\cup\dots\cup \Omega_N\cup\Omega_\mathscr{K}\cup\tilde{\Omega}$$ contains
the set 
 $$\Omega\colon=\Big\{t\in\mathbb{C}^{M}\colon t_1,\dots,t_{M-1}\neq 0, \; t_M\neq b_l, \;l=1,\dots,N, \; 
 \|t\|<\eta \Big\},$$
whence $\Omega\subset  \pi^{-1}(\mathscr{U})$. Therefore the set $\Omega\cup\pi^{-1}(S)$, which is a neighborhood of the exceptional divisor, is contained in $\pi^{-1}(\mathscr{U}\cup S)$. The proof of Proposition 
\ref{log} is finished. \qed

\end{document}